\newcommand{\nocontentsline}[3]{}
\let\oldaddcontentsline\addcontentsline
\newcommand{\tocless}[2]{%
  \let\addcontentsline\nocontentsline
  #1{#2}
  \let\addcontentsline\oldaddcontentsline}
\newcommand{\showcommentsbox}{yes}
\newsavebox{\commentbox}
\newcounter{ax}
\newtheorem{thm}{Theorem}[section]
\newtheorem{lem}[thm]{Lemma}
\newtheorem{cor}[thm]{Corollary}
\newtheorem{conj}[thm]{Conjecture}
\newtheorem{prop}[thm]{Proposition}
\newtheorem{thmi}{Theorem}
\newtheorem{cori}[thmi]{Corollary}
\newtheorem*{exceptCor}{Theorem~\ref{exceptCor}}
\newtheorem*{lem:unbranched-implies-one-Nielsen}{Theorem~\ref{lem:unbranched-implies-one-Nielsen}}
\theoremstyle{definition}
\newtheorem{defn-inner}[thm]{Definition}
\newenvironment{defn}{%
  \pushQED{\qed}%
  \begin{defn-inner}%
}{%
  \popQED
  \end{defn-inner}%
}
\newtheorem{exmp-inner}[thm]{Example}
\newenvironment{exmp}{%
  \pushQED{\qed}%
  \begin{exmp-inner}%
}{%
  \popQED
  \end{exmp-inner}%
}
\newtheorem{notation-inner}[thm]{Notation}
\newtheorem*{thm*}{sketch of Proof of Theorem \ref{\maptor}}
\newtheorem{claim}{Claim}
\newtheorem{claim*}{Claim}
\newtheorem*{theorem*}{Theorem}
\newtheorem{question}[thm]{Question}
\newtheorem{example}[thm]{Example}
\newtheorem{remark-inner}[thm]{Remark}
\newenvironment{remark}{%
  \pushQED{\qed}%
  \begin{remark-inner}%
}{%
  \popQED
  \end{remark-inner}%
}
\DeclareMathOperator{\Fix}{Fix}
\DeclareMathOperator{\kernel}{ker}
\DeclareMathOperator{\image}{im}
\DeclareMathOperator{\rank}{rk}
\DeclareMathOperator{\Aut}{Aut}
\DeclareMathOperator{\Out}{Out}
\DeclareMathOperator{\stabilizer}{Stab}
\newcommand{\neb}{\mathcal N}
\newcommand{\factor}[2]{{\raise0.7ex\hbox{$#1$} \!\mathord{\left/ {\vphantom {#1 {#2}}}\right.\kern-\nulldelimiterspace}\!\lower0.7ex\hbox{${#2}$}}}
\newcommand{\field}[1]{\mathbb{#1}}
\newcommand{\integers}{\ensuremath{\field{Z}}}
\newcommand{\naturals}{\ensuremath{\field{N}}}
\newcommand{\reals}{\ensuremath{\field{R}}}
\newcommand{\GG}{\ensuremath{\field{G}}}
\newcommand{\boundary}{{\ensuremath \partial}}
\newcommand{\Rmnum}[1]{\mathbf{{\expandafter\@slowromancap\romannumeral #1@}}}
\DeclareMathOperator{\Isom}{Isom}
\let\oldmarginpar\marginpar
\renewcommand\marginpar[1]{\-\oldmarginpar[\raggedleft\footnotesize #1]{\raggedright\footnotesize #1}}
\newcommand{\tsh}[1]{\left\{\kern-.7ex\left\{#1\right\}\kern-.7ex\right\}}
\newcommand{\co}{\colon}
\newcounter{enumitemp}
\newcommand{\calC}{\mathcal C}
\newcommand{\gate}{\mathfrak g}
\newcommand{\hhsconst}{E} 
\newcommand{\edges}{\mathbf{Edges}}
\newcommand{\vertices}{\mathbf{Vertices}}
\newcommand{\nest}{\sqsubseteq}
\newcommand{\orth}{\bot}
\newcommand{\transverse}{\pitchfork}
\newcommand{\out}{\ensuremath{\mathrm{Out}(\mathbb{F}) } }
\newcommand{\aut}{\ensuremath{\mathrm{Aut}(\mathbb{F}) }}
\newcommand{\maptor}{\Gamma}
\newcommand{\F}{\ensuremath{\mathbb{F} } }
\newcommand{\bigset}{\mathrm{Big}}
\begin{document}
\title{Characterizing hierarchically hyperbolic free by cyclic groups}

\author{Eliot Bongiovanni}\thanks{The first author was partially supported by NSF grant DMS-1745670.}
    \address{Department of Mathematics \\ Rice University \\ Houston, Texas}
    \curraddr{Department of Mathematics \\ University of Michigan \\ Ann Arbor, Michigan}
    \email{\url{eb40@rice.edu}, \url{bongio@umich.edu}}
    \urladdr{https://eliotbongiovanni.com}
\author{Pritam Ghosh}\thanks{The second author was supported by the Ashoka University faculty research grant.}
 \address{Department of Mathematics \\
Ashoka University\\
  Haryana 131029, India}
  \email{\url{pritam.ghosh@ashoka.edu.in}}
\author {Funda G\"ultepe}\thanks{The third author was partially
supported by NSF grant DMS-2137611.}
\address{Department of Mathematics and Statistics\\
 University of Toledo\\
 Toledo, OHIO}
\email{\url{funda.gultepe@utoledo.edu}}
\urladdr{https://sites.google.com/view/fundagultepe}
\author{Mark Hagen}
\address{School of Mathematics, University of Bristol, Bristol, UK}
\email{\url{markfhagen@posteo.net}}
\urladdr{https://www.wescac.net/}
\date{\today}

\begin{abstract}
We algebraically characterize free by cyclic groups that have coarse medians, and prove that this is equivalent to the a priori stronger properties of being colorable hierarchically hyperbolic groups and being quasi-isometric to CAT(0) cube complexes.  Our algebraic characterization involves a condition on intersections between maximal virtually $F_n\times \mathbb Z$ subgroups, which we call being \emph{unbranched}.  We also characterize hierarchical hyperbolicity of $\Gamma=F_n\rtimes_{\phi}\mathbb Z$ in terms of a property of completely split relative train track representatives of $\phi\in\mathrm{Out}(F_n)$ that we call \emph{excessive linearity}, a slight refinement of the \emph{rich linearity} condition for relative train track maps introduced by Munro and Petyt. 
\end{abstract}

\maketitle

\tableofcontents

\section{Introduction}\label{sec:intro}
Let $\F$ be a free group of finite rank $\geq 3$ and let $\Aut(\F)$ be the group of automorphisms of $\F$. The quotient group
$\out=\aut/\mathrm{Inn}(\F)$ is  the group  of outer automorphisms of $\F$.  A group $\maptor $ is \emph{free by cyclic} if it fits in a short exact sequence
   \[ 1\to \F \to \maptor  \to {\langle \phi \rangle } \to 1.\]
 In other words  $\maptor=\F\rtimes_\phi\integers$
 is the pullback of $\mathbb Z=\langle \phi \rangle $ to $\aut$,  where $\phi\in \out$. 
 $\maptor$ is the \emph{mapping torus} of the outer automorphism $\phi$, following the terminology of fundamental groups of mapping tori surface homeomorphisms, and has a presentation
 \[  \maptor=\F\rtimes_\phi\integers=\langle \F, t\mid \{txt^{-1}= \Phi(x):x\in \F\} \rangle, \]
where $\Phi\in\Aut(\F)$ represents $\phi$; the group $\maptor$ depends on $\phi$ but not on the choice of $\Phi$. 

An outer automorphism $\phi\in \out$ is of one of three dynamical types according to its growth rate, which captures how the lengths of (conjugacy classes of) words in $\mathbb{F}$ grow under repeated applications of $\phi$.
We say that $\phi$ is \emph{exponentially growing}, \emph{polynomially growing}, or \emph{finite order}, respectively, if for some conjugacy class $[w]$ of an element $w\in \mathbb F$, the word length (for any fixed generating set of $\F$) of $\phi^{i}([w])$ grows exponentially, or every $[w]$ grows polynomially, or every $[w]$ has bounded length.
The geometry of free by cyclic groups varies with this dynamical type: For example, when $\phi$ is exponentially growing and atoroidal, 
$\maptor$ is hyperbolic \cite{Br-00}, and more generally, $\phi$ is (non-atoroidal)  exponentially growing if and only if $\maptor$ is hyperbolic relative to subgroups with polynomial growth \cite{Ghosh, DahmaniLi}.

Comparatively little is clear about the geometry of free by cyclic groups with polynomial growth.
Some act freely on CAT(0) cube complexes \cite{Button:tubular-cubes,HagenWise:new-polynomial}, which suggests some hyperbolic features (since any hyperbolic free by cyclic group is cocompactly cubulated \cite{HagenWise:irred, HagenWise:general, DahmaniKrishnaMutanguha}). However, it is unknown exactly when there is a free action on a CAT(0) cube complex, and in general such actions cannot be cocompact \cite{Gersten,MunroPetyt}. 
In fact, there are free by cyclic groups (in fact, geometric ones) with only linear growth that are CAT(0) but not virtually cocompactly cubulated; see Example \ref{exmp:not-via-specialness}, which uses \cite{HagenPrzytycki}.
Free by cyclic groups with polynomial growth are \emph{thick} in the sense of \cite{BDM} by \cite{Macura:detour,Hagen:thickness}, and hence are not hyperbolic relative to any collection of proper subgroups, using \cite{BDM}.  Yet, they exhibit some coarse nonpositive curvature properties: For example, they famously satisfy a quadratic isoperimetric inequality \cite{BridsonGroves}.

This paper identifies the missing piece of the puzzle: Under a straightforward algebraic condition related to their linearly growing components, polynomial free by cyclic groups---and hence all free by cyclic  groups satisfying the condition---are hierarchically hyperbolic groups and, moreover, are quasicubical (i.e. quasi-isometric to a CAT(0) cube complex). We refer to this condition as being \emph{unbranched}.
Roughly speaking, a group is unbranched if it is impossible for three pairwise-incommensurable maximal $\F\times\integers$ subgroups to intersect along a non-cyclic subgroup.  
Our main theorem incorporates these and more conditions from existing literature, showing that they are in fact all equivalent.
We prove this in Section \ref{sec:proof}; Figure \ref{fig:main-theorem-proof} illustrates the implications in the proof and highlights the contributions made by this paper.

\begin{thmi}\label{thm:main}
Let $\F$ be a finite-rank free group (of rank at least $2$), let $\phi\in\Out(\F)$, and let $\maptor=\F\rtimes_\phi\integers$ be the mapping torus.  Then the following are equivalent:
\begin{enumerate}[(I)]
    \item \label{item:unbranched-blocks} $\maptor$ is \emph{unbranched} (see Definition \ref{defn:block-unbranched}).
    
    \item\label{item:coarse-median} $\maptor$ admits a coarse median, in the sense of \cite{Bowditch:coarse-median}.

    \item \label{item:virtual-hhg} $\maptor$ is \emph{virtually} a colorable HHG, in the sense of \cite{DMS}.

    \item \label{item:actual-hhg} $\maptor$ is a colorable HHG.

    \item \label{item:no-2-RBF} $\maptor$ has no QI--embedded dimension--$2$ richly branching flat (Definition \ref{2RBF}). 

    \item \label{item:QI-to-cube-complex} $\maptor$ is quasi-isometric to a finite-dimensional CAT(0) cube complex.
\end{enumerate}
Moreover, the coarse median in \eqref{item:coarse-median} has finite rank if it exists.
\end{thmi}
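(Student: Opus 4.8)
The plan is to prove the six conditions equivalent via the cycle \eqref{item:unbranched-blocks} $\Rightarrow$ \eqref{item:actual-hhg} $\Rightarrow$ \eqref{item:QI-to-cube-complex} $\Rightarrow$ \eqref{item:coarse-median} $\Rightarrow$ \eqref{item:no-2-RBF} $\Rightarrow$ \eqref{item:unbranched-blocks}, together with the short detour \eqref{item:actual-hhg} $\Rightarrow$ \eqref{item:virtual-hhg} $\Rightarrow$ \eqref{item:coarse-median} that splices the ``virtually'' statement into the equivalence. Several of these arrows are soft. The implication \eqref{item:actual-hhg} $\Rightarrow$ \eqref{item:virtual-hhg} is trivial, and \eqref{item:actual-hhg} $\Rightarrow$ \eqref{item:QI-to-cube-complex} is the known fact that a colourable HHG is quasi-isometric to a finite-dimensional CAT(0) cube complex \cite{DMS}. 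For \eqref{item:virtual-hhg} $\Rightarrow$ \eqref{item:coarse-median} and \eqref{item:QI-to-cube-complex} $\Rightarrow$ \eqref{item:coarse-median} one uses that CAT(0) cube complexes carry genuine medians and colourable HHGs carry coarse medians, that admitting a coarse median is a quasi-isometry invariant, and that a finite-index subgroup is quasi-isometric to the ambient group. The \emph{moreover} clause follows once the cycle is closed: a colourable HHG has a coarse median of finite rank, and coarse medians on a fixed space agree up to bounded distance \cite{Bowditch:coarse-median}, so the coarse median in \eqref{item:coarse-median} has finite rank. (One can also observe a priori that $\maptor=\F\rtimes_\phi\integers$ contains no $\integers^3$: an abelian subgroup either lies in $\F$, hence is cyclic, or surjects onto $\integers$ with kernel of rank at most $1$, hence has rank at most $2$; this alone bounds the rank of any coarse median by general principles.)

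For \eqref{item:coarse-median} $\Rightarrow$ \eqref{item:no-2-RBF} I would argue contrapositively, using that Definition \ref{2RBF} isolates exactly a configuration --- a QI-embedded dimension-$2$ richly branching flat --- whose branching is incompatible with the coarse median inequalities, so that the presence of a coarse median forces the absence of such a flat. The implication \eqref{item:no-2-RBF} $\Rightarrow$ \eqref{item:unbranched-blocks} is the first step requiring real structure theory. Here I would invoke the relative-hyperbolicity description of $\maptor$ from \cite{Ghosh, DahmaniLi}: $\maptor$ is hyperbolic relative to the mapping tori of its maximal polynomially-growing pieces, and the \emph{blocks} of Definition \ref{defn:block-unbranched} are (conjugates of) the maximal virtually $F_n\times\integers$ subgroups living inside those peripherals. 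A failure of the unbranched-blocks condition yields two such subgroups whose coarse intersection branches; using the $\integers$-direction of the ambient mapping torus as one flat direction and the tree coming from the $F_n$-factors to supply the branching, I would assemble a QI-embedded dimension-$2$ richly branching flat, contradicting \eqref{item:no-2-RBF}. Undistortedness of maximal $F_n\times\integers$ subgroups is what makes the resulting flat QI-embedded.

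The heart of the argument, and the step I expect to be the main obstacle, is \eqref{item:unbranched-blocks} $\Rightarrow$ \eqref{item:actual-hhg}. I would again pass to the relative hyperbolic structure and apply a combination theorem --- a group hyperbolic relative to colourable HHGs is itself a colourable HHG --- reducing to the case in which $\maptor$ is a single polynomial free-by-cyclic group satisfying the unbranched-blocks condition. For such a group I would fix a completely split relative train track representative of $\phi$, analyse its linearly-growing strata, and use the \emph{excessive linearity} refinement of the Munro--Petyt rich-linearity condition \cite{MunroPetyt} to organise the finitely many conjugacy classes of $F_n\times\integers$ blocks --- together with their mutual coarse intersections and the ways they sit inside $\maptor$ --- into the index set of a hierarchically hyperbolic structure, carrying along a colouring. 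The unbranched hypothesis is precisely what excludes the configurations that would otherwise produce unbounded branching among the blocks and so violate the nesting, orthogonality, and transversality relations; the technical heart lies in verifying the HHS axioms --- in particular realisation and the consistency of the projection maps between blocks that share a linear stratum --- and in producing the colouring. Finally \eqref{item:unbranched-blocks} $\Rightarrow$ \eqref{item:QI-to-cube-complex} follows by composing with \eqref{item:actual-hhg} $\Rightarrow$ \eqref{item:QI-to-cube-complex}, which closes the scheme.
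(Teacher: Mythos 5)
Your cycle of implications is logically fine, and the soft arrows are handled correctly, but the two substantive steps both have gaps. For the hard direction \eqref{item:unbranched-blocks} $\Rightarrow$ \eqref{item:actual-hhg}, you propose passing to a CT representative of $\phi$ and organising the blocks via excessive linearity, but this route only gets you to \eqref{item:virtual-hhg}: CT maps exist for rotationless (and, in the polynomial case, UPG) automorphisms, which forces a passage to $\phi^k$ and hence to a finite-index subgroup of $\maptor$, and being an HHG is \emph{not} a commensurability invariant (see Remark~\ref{rem:PS} and \cite{PetytSpriano}). The paper therefore proves \eqref{item:unbranched-blocks} $\Rightarrow$ \eqref{item:virtual-hhg} (Lemma~\ref{lem:unbranched-implies-HHG}) and upgrades to \eqref{item:actual-hhg} separately (Proposition~\ref{prop:remove-virtual}), using equivariance results of Andrew--Martino and Kudlinska--Valiunas to promote the $\phi^k$--invariant splittings to $\phi$--invariant ones. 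You do not address this, and without it the arrow you actually prove is \eqref{item:unbranched-blocks} $\Rightarrow$ \eqref{item:virtual-hhg}, which does not close your cycle. Relatedly, excessive linearity (Definition~\ref{defn:excessive-linearity}) is defined only for automorphisms of at most linear growth; for superlinear polynomial $\phi$ you would first need the cyclic hierarchy of Proposition~\ref{prop:cyclic-hierarchy} (Macura's topmost-edges splitting) together with an inductive combination step over $\integers$--edge groups, which is what Proposition~\ref{prop:polynomial-growth-is-virtually-acceptable} and Lemma~\ref{lem:combination-theorem} do. Note the paper explicitly avoids relative train tracks in the proof of Theorem~\ref{thm:main}, reserving them for Theorems~\ref{exceptCor} and~\ref{lem:unbranched-implies-one-Nielsen}; instead it works with the splittings of Propositions~\ref{prop:split-over-tori} and~\ref{prop:cyclic-hierarchy} and the HHG combination theorem \cite[Cor.~8.24]{HHSII} together with \cite{HRSS}. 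Your CT-based plan could in principle be made to work but it is a genuinely different route, and you would have to re-derive from CT data the structural facts (acylindricity, intersection patterns of vertex stabilisers, cobounded product regions) that the splittings give essentially for free.

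The step \eqref{item:no-2-RBF} $\Rightarrow$ \eqref{item:unbranched-blocks} is also underdeveloped and has the wrong picture. Unbranching concerns \emph{three} pairwise-incommensurable blocks with non-virtually-cyclic mutual intersection, not two, and the base flat of the $2$--RBF is not ``the $\integers$-direction of the ambient mapping torus'': in the paper the base flat is the $\integers^2$ black-vertex stabiliser $\maptor_b=\langle r_b,t_b\rangle$ in the splitting of Proposition~\ref{prop:split-over-tori}, and the half-flats are the translates $p_{e_i}^nM_i^+$ coming from the $\geq 3$ incident blocks, glued along non-parallel quasilines $L_i\leq\maptor_b$; QI-embeddedness uses that abelian subgroups are undistorted \cite{Button:aspects} and that peripheral and vertex subgroups are QI-embedded. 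Without identifying the black-vertex $\integers^2$ as the base flat and using the valence $\geq 3$ hypothesis, the construction of the branching does not go through.
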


In other words, Theorem \ref{thm:main} characterizes \emph{all} hierarchically hyperbolic free by cyclic groups, regardless of growth rate, answering a problem of \cite{Banff}\footnote{This problem list, compiled from discussions at the 2024 BIRS conference \emph{Advances in Hierarchical Hyperbolicity}, can be accessed at \url{https://www.wescac.net/HHG_Question_Session-dec-2024.pdf}.}. 
We note that the case $\rank(\F)=2$ already follows from \cite{Ghosh,DahmaniLi} and \cite{HRSS}; also, the ``unbranched'' condition holds automatically in rank $2$.

This paper further shows that the relatively hyperbolic free by cyclic groups characterized in  \cite{Ghosh, DahmaniLi} are hierarchically hyperbolic and quasicubical provided the peripheral subgroups are, and this in turn just depends on linear growth subgroups. 

\begin{cori}\label{cor:RH} 
There is a finite set $\{G_i\}_{i=1}^r$ of subgroups of $G$ such that each $G_i$ is isomorphic to the mapping torus of a linear-growth automorphism of a finitely generated subgroup of $\F$ and $G$ is hierarchically hyperbolic if and only if each $G_i$ is hierarchically hyperbolic if and only if each $G_i$ is unbranched.

In particular, relatively hyperbolic free by cyclic groups are hierarchically hyperbolic if and only if the peripheral subgroups are unbranched.
\end{cori}

\begin{proof}
Use Proposition \ref{prop:relhyper} (which is from \cite{Ghosh}), Theorem \ref{thm:main}, and \cite[Thm. 9.1]{HHSII} to verify the second assertion, and reduce the first assertion to the case where $G=\F\rtimes_\phi\integers$ with $\phi$ polynomially-growing.  By Theorem \ref{thm:main}, we can pass to a finite index subgroup and assume that $G$ has an acylindrical cyclic hierarchy as in Proposition \ref{prop:cyclic-hierarchy}; the terminal vertex groups provide the $G_i$ from the statement, and $G$ is unbranched if and only if the $G_i$ are, by Proposition \ref{prop:unbranched-cyclic-splitting}, and we conclude by applying Theorem \ref{thm:main} to $G$ and each $G_i$.
\end{proof} 
\noindent Similarly, we also show that hierarchical hyperbolicity of $G$ can be characterized by examining linearly-growing strata in a completely split relative train track representative of (a power of) $\phi$, in Theorems \ref{exceptCor} and \ref{lem:unbranched-implies-one-Nielsen}.

Hierarchically hyperbolic spaces/groups (HHS/G) were introduced in \cite{HHSI, HHSII}  to study cubical groups, mapping class groups of surfaces of finite type, and other natural examples. Hierarchical hyperbolicity uses an axiomatic framework involving a collection of projections to hyperbolic spaces, which enables studying the geometry of the space by working in the hyperbolic ``coordinate spaces''.  This reveals  features reminiscent of hyperbolicity in the whole space. The class of HHSs includes 
mapping class groups and many CAT(0) cube complexes (including all universal covers of compact special cube complexes as in \cite{HaglundWise:special}), along with Gromov hyperbolic spaces (trivially, though importantly), extensions of finitely generated Veech groups \cite{Bongiovanni:Veech}, Teichm\"{u}ller space with any of the usual metrics, many Artin groups \cite{HHSI,HMSxl}, and more.  On the other hand, $\Out(\F)$ is not an HHS/HHG for isoperimetric inequality reasons \cite{HHSII}.

Theorem \ref{thm:main} involves some subtleties about HHGs, and says that a free by cyclic group is either not even a hierarchically hyperbolic \emph{space}, or is an HHG in a particularly strong way. Colorability (Definition \ref{defn:colourable}) does not hold for all HHGs \cite{Hagen:colour}, and is required in order to get the conclusion about quasi-isometries to cube complexes.
Although being an HHS is a quasi-isometry invariant of a space, for a group $G$ to be an HHG, one needs not only that any Cayley graph is an HHS, but also that the hierarchically hyperbolic structure is $G$--invariant in an appropriate sense.  Hence, being an HHG is not only not a quasi-isometry invariant property of a group, it is not even a commensurability invariant \cite{PetytSpriano}, so the implication \eqref{item:virtual-hhg}$\implies$\eqref{item:actual-hhg} in Theorem \ref{thm:main} has actual content.

\begin{figure}[!ht]
    \centering
    \resizebox{0.9\textwidth}{!}{
    \begin{circuitikz}
    \tikzstyle{every node}=[font=\large]
    \node [font=\LARGE] at (2.75,12.75) {\ref{item:actual-hhg}};
    \draw [->,double distance=3pt, >=Stealth] (8.5,12.25) -- (9.75,11)node[pos=0.3,right]{\phantom{aa}\cite[Theorem B]{Pet}};
    \node [font=\LARGE] at (8,12.75) {\ref{item:virtual-hhg}};
    \node [font=\LARGE] at (10.25,10.5) {\ref{item:QI-to-cube-complex}};
    \node [font=\LARGE] at (12.5,8.25) {\ref{item:coarse-median}};
    \node [font=\LARGE] at (14.75,6) {\ref{item:no-2-RBF}};
    \node [font=\LARGE] at (8,8.25) {\ref{item:unbranched-blocks}};
    \draw [->,double distance=3pt, >=Stealth] (11.75,8.25) -- (8.75,8.25)node[pos=0.5,below]{};
    \draw [->,double distance=3pt, >=Stealth] (8,9) -- (8,12)node[pos=0.5,left]{Lemma \ref{lem:unbranched-implies-HHG}};
    \draw [->,double distance=3pt, >=Stealth] (14,6) -- (8.75,7.75)node[pos=0.4,below]{Lemma \ref{lem:coarse-median-implies-unbranched-blocks}\phantom{aaaaaaaaaaaaa}};
    \draw [->,double distance=3pt, >=Stealth] (10.75,10) -- (12,8.75)node[pos=0.3,right]{\phantom{aa}definition of coarse median};
    \draw [->,double distance=3pt, >=Stealth] (13,7.75) -- (14.25,6.5)node[pos=0.3,right]{\phantom{aa}\cite{MunroPetyt}};
    \draw [->,double distance=3pt, >=Stealth] (7.5,12.5) -- (3.25,12.5)node[pos=0.5,below]{Proposition \ref{prop:remove-virtual}};
    \draw [->,double distance=3pt, >=Stealth] (3.25,13) -- (7.5,13)node[pos=0.5,above]{\small{(trivially)}};
    \end{circuitikz}
    }
    \caption{Sketch of proof of Theorem \ref{thm:main}.}
\label{fig:main-theorem-proof}
\end{figure}

\begin{proof}[Sketch of Proof of Theorem \ref{thm:main}]
   Given $\maptor=\F\rtimes_\phi\integers$  as in the statement of Theorem \ref{thm:main} (where we identified free by cyclic group with the fundamental group of its associated mapping torus), using Proposition \ref{prop:relhyper}, which restates \cite[Thm. 3.5]{DahmaniLi} or \cite[Thm. 3.15]{Ghosh}, we reduce to the case where $\phi$ is polynomially growing. When $\phi$ is polynomially growing  with degree $d\geq 2$, we use the cyclic hierarchy (Proposition \ref{prop:cyclic-hierarchy}) to split $\maptor$ into a \emph{hierarchy} of  graph of group decompositions where the edge groups are cyclic and at the bottom of the hierarchy vertex groups are mapping tori of free group automorphisms of polynomial growth rate at most $1$. For these latter vertex groups we use a graph of groups  splitting of their mapping tori  where the edge groups are $\mathbb Z \times \mathbb Z$ (Proposition \ref{prop:split-over-tori}). 

For linearly growing mapping tori, when $\maptor$ is unbranched, graph of groups decomposition is \emph{admissible} (\cite{MunroPetyt}), and hence HHG by \cite{HRSS}. Hence when $\maptor$ is unbranched with polynomial growth $d\geq 2$, we prove that the cyclic hierarchy $\maptor$ splits into is an \emph{acceptable}  graph of groups decomposition of $\maptor$ (Definition \ref{defn:acceptable-cyclic-HHG-graph}) where the ending vertex groups are HHG by admitting admissible decomposition. Then using combination Theorem of \cite{HHSII} we show in Lemma \ref{lem:combination-theorem} that the total group of a acceptable graph of groups decomposition admits a HHG structure with some additional properties. Hence, when $\maptor$ is unbranched it is quasi-isometric to a finite-dimensional CAT(0) cube complex (\cite{Pet}) and is in particular coarse median of finite rank.  

This sketches the proof of $\eqref{item:unbranched-blocks}\implies\eqref{item:virtual-hhg}$\label{lem:unbranched-implies-HHG} and summarizes core ideas of the proof of Theorem \ref{thm:main}.  For the rest of the sketch we refer the reader to the chart given in Figure \ref{fig:main-theorem-proof}.
\end{proof}

Additionally, our results imply that a free by cyclic group satisfying any of the equivalent conditions given in Theorem \ref{thm:main} also satisfies previously unknown properties. For example, there is a \emph{distance formula} (\cite[Thm. 4.4, 4.5]{HHSII}) which measures, in the sense of Masur and Minsky \cite{MM2}, the distance between points in terms of projection distances into the coordinate sets of the HHS structure. Another consequence of our results is the \emph{local-to-global Morse property} ([Question 1.2,\cite{RST-MLTG}]) for free by cyclic groups that are included in our theorems. We remark that local-to-global Morse property for \emph{all} free by cyclic groups is recently proven in \cite{KudPetyt} by different methods. 

Here are consequences of Theorem \ref{thm:main} that, to our knowledge, have not been proved for free by cyclic groups by other methods:

\begin{cori}\label{cor:intro-consequences}
Let $\maptor$ be a free by cyclic group.  If $\maptor$ is unbranched, then  the following hold:
\begin{enumerate}[(i)]
    \item $\maptor$ acts freely and coboundedly on an injective metric space.\label{item:injective-space}

    \item $\maptor$ is \emph{semihyperbolic} in the sense of \cite{AB}.\label{item:semihyperbolic}

    \item $\maptor$ acts geometrically on a metric space whose asymptotic cones are all CAT(0).\label{item:asymp-cat}

    \item Every asymptotic cone of $\maptor$ is bilipschitz to a \emph{real cubing} in the sense of \cite{CRHK}.\label{item:cone-median}
\end{enumerate}
\end{cori}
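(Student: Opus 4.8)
The plan is to derive all four items as direct consequences of Theorem~\ref{thm:main}, which carries all the weight: since $\maptor$ is unbranched, Theorem~\ref{thm:main} gives that $\maptor$ is a colourable HHG (item~\eqref{item:actual-hhg}), that it is quasi-isometric to a finite-dimensional CAT(0) cube complex (item~\eqref{item:QI-to-cube-complex}), and that it admits a coarse median of finite rank (item~\eqref{item:coarse-median} together with the final sentence of that theorem). I will also use the elementary observation that $\maptor=\F\rtimes_\phi\integers$ is torsion-free, being an extension with kernel $\F$ and quotient $\integers$ both torsion-free. Each of \eqref{item:injective-space}--\eqref{item:cone-median} is then obtained by plugging one of these structural conclusions into an existing theorem, so once Theorem~\ref{thm:main} is available the corollary is a matter of assembling citations.

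For \eqref{item:injective-space} I would invoke the theorem of Haettel--Hoda--Petyt that every (colourable) HHG acts properly and coboundedly by isometries on an injective metric space. Since the action is proper its point stabilizers are finite, hence trivial because $\maptor$ is torsion-free, so the action is in fact free; combined with coboundedness this is \eqref{item:injective-space}. For \eqref{item:semihyperbolic} I would quote the further result of Haettel--Hoda--Petyt that a group acting geometrically on an injective metric space is semihyperbolic in the sense of \cite{AB}, and apply it to the action produced for \eqref{item:injective-space}; equivalently one cites directly that colourable HHGs are semihyperbolic.

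For \eqref{item:cone-median} I would apply \cite{CRHK}, which shows that every asymptotic cone of a colourable HHG is bilipschitz to a real cubing, to the colourable HHG $\maptor$. For \eqref{item:asymp-cat} I would start from item~\eqref{item:QI-to-cube-complex}: $\maptor$ is quasi-isometric to a finite-dimensional CAT(0) cube complex $X$, and on such an $X$ the combinatorial metric and the piecewise-Euclidean CAT(0) metric $d$ are bilipschitz equivalent (the dimension being finite), so $\maptor$ is quasi-isometric to the CAT(0) space $(X,d)$. As asymptotic cones of CAT(0) spaces are CAT(0) and asymptotic cones are quasi-isometry invariants up to bilipschitz homeomorphism, the asymptotic cones of $\maptor$ are bilipschitz to CAT(0) spaces; to upgrade this to a genuine geometric action of $\maptor$ on a metric space whose asymptotic cones are literally CAT(0) I would use the CAT(0) ($\ell^2$-type) realizations of the real cubings from \cite{CRHK}, which on finite-rank real cubings are bilipschitz to the median metric, replacing $\maptor$'s word metric by the corresponding $\maptor$-invariant model metric.

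The single point requiring any care is the last step of \eqref{item:asymp-cat}: ``quasi-isometric to a CAT(0) cube complex'' gives a priori only that the asymptotic cones of $\maptor$ are bilipschitz to CAT(0) spaces, not that $\maptor$ acts geometrically on a space with honestly CAT(0) asymptotic cones, and bridging that gap is where one has to use the real-cubing structure of \cite{CRHK}. Everything else is a citation, and I do not expect a genuine obstacle: all of the difficulty of the corollary is already contained in Theorem~\ref{thm:main}.
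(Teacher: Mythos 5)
Your handling of items (i), (ii) and (iv) matches the paper: the paper likewise derives (i) and (ii) from Haettel--Hoda--Petyt applied to the colourable HHG structure, with the same torsion-freeness observation implicitly needed to upgrade ``proper'' to ``free'', and derives (iv) from \cite[Thm.\ C]{CRHK}. (The paper also records the alternative of getting (ii) from \cite[Cor.\ C]{DMS}.)

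For item (iii), however, your route has a genuine gap, which you partly flag but do not close. The paper simply cites \cite[Thm.\ A]{DMS2}, which gives directly that a (colourable) HHG acts geometrically on a metric space all of whose asymptotic cones are CAT(0). Your alternative passes through Theorem~\ref{thm:main}\eqref{item:QI-to-cube-complex} (quasi-isometry to a CAT(0) cube complex), observes that the combinatorial and CAT(0) metrics on a finite-dimensional cube complex are bilipschitz, and concludes that the asymptotic cones of $\maptor$ are bilipschitz to CAT(0) spaces. That much is fine, but it is strictly weaker than the statement to be proved: (iii) asserts the existence of a $\maptor$-invariant metric whose cones are \emph{literally} CAT(0), and the quasi-isometry supplied by \cite{petyt_mapping_2022} is not a group action, so no CAT(0) metric can be pulled back equivariantly to $\maptor$. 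Your proposed fix --- using ``the CAT(0) ($\ell^2$-type) realizations of the real cubings from \cite{CRHK}'' to produce a $\maptor$-invariant model metric --- does not work as stated: the real cubings of \cite{CRHK} live in asymptotic cones, which are not equipped with an induced $\maptor$-action, and there is no mechanism in \cite{CRHK} for transporting a median/CAT(0) structure on a cone back to an invariant metric on the group itself. What is actually needed at this point is an equivariant approximation result for HHGs (a bicombing or stable-cubulation theorem), and that is precisely the content of \cite[Thm.\ A]{DMS2}; your argument should cite it directly rather than attempting to manufacture the invariant metric from the cube-complex quasi-isometry and the asymptotic-cone structure.
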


\begin{proof}
Given Theorem \ref{thm:main}, items \eqref{item:injective-space} and \eqref{item:semihyperbolic} follow from \cite[Cor. H, Prop. 1.1]{haettel_coarse_2020}, item \eqref{item:asymp-cat} from \cite[Thm. A]{DMS2}, and item \eqref{item:cone-median} from \cite[Thm. C]{CRHK}.  Alternatively, item \eqref{item:semihyperbolic} also follows from Theorem \ref{thm:main} and \cite[Cor. C]{DMS}.
\end{proof}

%

We next discuss a version of Theorem \ref{thm:main} phrased in terms of relative train tracks on graphs, which topologically represent outer automorphisms of the free group \cite{BFH-00,BestvinaHandel:ttm}.   
Relative train track theory is extremely powerful and has undergone various upgrades. We work with \emph{completely split improved relative train track maps (CTs)}, introduced in \cite{FeighHandel:recognition}; see Section \ref{subsec:IRTT}.  

\begin{thmi}\label{exceptCor}
Let $\phi\in\Out(\F)$ be an infinite order element.  Then the following are equivalent, where $k\geq 1$ is a constant depending only on $\rank(\F)$: 
\begin{enumerate}
    \item $\F\rtimes_\phi\integers$ is a colorable hierarchically hyperbolic group. 
    \item Given any $\phi^k$--invariant free factor $[F^1]$ such that $\phi^k|_{F^1}$ has at most linear growth, no CT map representing this restriction has \emph{excessive linearity}.
    \label{item:no-exceptional} 
\end{enumerate}
\end{thmi}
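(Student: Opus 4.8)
The strategy is to bootstrap off Theorem \ref{thm:main}: its equivalence (\ref{item:unbranched-blocks})$\iff$(\ref{item:actual-hhg}) already identifies ``$\maptor$ is a colourable HHG'' with ``$\maptor$ has unbranched blocks'', so it suffices to prove, for $\phi$ of infinite order, that having unbranched blocks is equivalent to the relative train track condition (\ref{item:no-exceptional}). First I would pass to a rotationless power: by \cite{FeighHandel:recognition} there is a constant $k=k(\rank\F)$ with $\phi^k$ rotationless, and restrictions of a rotationless automorphism to invariant free factors admit CT representatives. The subgroup $\F\rtimes_{\phi^k}\integers$ has index $k$ in $\maptor$, and ``having unbranched blocks'' (Definition \ref{defn:block-unbranched}) is phrased purely in terms of commensurability classes of subgroups, hence is inherited by finite-index subgroups and overgroups; so $\maptor$ has unbranched blocks iff $\F\rtimes_{\phi^k}\integers$ does. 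Here Theorem \ref{thm:main} is essential, since colourable-HHG is not a commensurability invariant (Remark \ref{rem:PS}) while its reformulation as unbranched blocks is. It then remains to show that $\F\rtimes_{\phi^k}\integers$ has unbranched blocks if and only if no CT representing a linearly-growing restriction of $\phi^k$ has excessive linearity.

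The core of the argument is a classification, up to commensuration, of the maximal virtually-$F_m\times\integers$ subgroups of $\F\rtimes_{\phi^k}\integers$. When $\phi$ is exponentially growing, Proposition \ref{prop:relhyper} (from \cite{Ghosh}) makes $\maptor$ hyperbolic relative to the mapping tori of its maximal polynomially-growing restrictions; since the relevant subgroups contain $\integers^2$ they are non-hyperbolic and hence conjugate into a peripheral, reducing us to the polynomially-growing case. There, working with a CT and its filtration, I would show each maximal virtually-$F_m\times\integers$ subgroup is determined up to commensuration by a twist-path conjugacy class $[u]$ together with a family of linear edges twisting about $u$, and that the remaining (superlinear) polynomially-growing strata contribute $F_m\times\integers$ subgroups nested along the filtration rather than transverse to one another. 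The consequence I am after is: a triple of pairwise-incommensurable maximal virtually-$F_m\times\integers$ subgroups meeting in a non-cyclic subgroup must in fact consist of subgroups commensurable into a single mapping torus $F^1\rtimes_{\phi^k}\integers$ with $\phi^k|_{F^1}$ linearly growing, while conversely such a triple inside such a mapping torus is a triple in $\maptor$. Thus unbranched blocks for $\maptor$ reduces to unbranched blocks checked one linearly-growing restriction at a time.

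It then remains to treat a single linearly-growing rotationless automorphism $\psi$ of a free group $F^1$ with CT representative $g$, and to show that $F^1\rtimes_\psi\integers$ fails to have unbranched blocks if and only if $g$ has excessive linearity. The point is that ``excessive linearity'' is exactly the combinatorial pattern among the linear strata of $g$ --- refining Munro--Petyt's rich linearity \cite{MunroPetyt} --- that records three incompatible twist blocks sharing a non-cyclic overlap. One direction is a construction: from such a pattern, write down twist paths and linear-edge data producing three pairwise-incommensurable maximal virtually-$F_m\times\integers$ subgroups with non-cyclic intersection. The other direction reads the pattern off $g$ using the classification of the previous paragraph. As a byproduct this equates excessive linearity with a property of the group $F^1\rtimes_\psi\integers$, hence shows it is independent of the chosen CT --- which is what makes the ``no CT map'' quantifier in (\ref{item:no-exceptional}) meaningful.

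I expect the main obstacle to be the classification step together with the forward implication of the last stage: building a precise dictionary between commensurability classes of maximal virtually-$F_m\times\integers$ subgroups and the linear-stratum data of a CT, and then verifying that the algebraic triple-branching condition is captured exactly --- neither more nor less --- by excessive linearity. This requires careful control of Nielsen paths and twist paths, of precisely when two linear edges yield commensurable subgroups (a common root of their twist paths, with compatible edge directions), and of how the ambient filtration prevents the superlinear strata from generating genuinely new branching.
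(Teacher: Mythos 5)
Your high-level plan matches the paper: use Theorem~\ref{thm:main} to replace ``colourable HHG'' by ``unbranched blocks'', pass to a rotationless power via the observation that unbranchedness is a commensurability invariant (the paper's Lemma~\ref{lem:unbranched-index}), dispose of the exponential case via the relatively hyperbolic structure (Propositions~\ref{prop:relhyper} and~\ref{prop:relhyp-unbranched}), and then reduce the general polynomial case to the linearly-growing case. That framing is correct. However, there are two genuine gaps.

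For the superlinear polynomial reduction you propose to work directly ``with a CT and its filtration'' and argue that the superlinear strata produce only nested, non-transverse $F_m\times\integers$ subgroups. The paper instead uses a splitting external to the CT: Macura's topmost-edges / cyclic hierarchy decomposition (Proposition~\ref{prop:cyclic-hierarchy}), whose Bass-Serre tree carries a $2$--acylindrical action with $\integers$ edge groups. Via Proposition~\ref{prop:unbranched-cyclic-splitting}, acylindricity forces every block to fix a vertex, so unbranchedness of $\maptor$ is equivalent to unbranchedness of the vertex groups, and descending the hierarchy reduces everything to the terminal, linearly-growing vertex groups (which are exactly the mapping tori $F^1\rtimes\integers$ of linear restrictions). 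Your proposed CT-filtration route would have to rederive acylindricity and the localization of blocks into linearly-growing sub-mapping-tori straight from the filtration; that is not obviously feasible, and it is not the argument the paper gives.

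More seriously, the linear case --- which is the entire content of Theorem~\ref{lem:unbranched-implies-one-Nielsen} and where the real work lives --- is only described as a target in your proposal, not proved. You say you would ``write down twist paths and linear-edge data'' and ``read the pattern off,'' but the tools needed are not named. The paper's route is to pass to the Dahmani--Touikan graph of groups decomposition (Proposition~\ref{prop:split-over-tori}), translate unbranchedness into the statement that every black vertex of $\Delta$ has valence $2$ (Lemma~\ref{lem:linear-unbranched}), and then compute that valence by analysing the JSJ cylinder subtree $\Omega_e$ and the cylinder complex $Z(u_e)$: the leaves of $\Omega_e$ correspond to the linear edges in $E(u)$, while any non-leaf vertex contributes exactly when the associated fixed subgroup has rank at least $2$, and this accounts for $T(u)$ (Corollary~\ref{cor:delta-valence-bound}, Lemma~\ref{lem:omega-no-saddle}). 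The ``dictionary between commensurability classes of blocks and linear-stratum data'' you flag as the main obstacle is precisely this tree-of-cylinders analysis, and without identifying and carrying out something equivalent, the claimed equivalence between non-unbranchedness and excessive linearity remains an assertion. Your proposal correctly diagnoses where the difficulty is but does not supply the missing idea.
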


\emph{Excessive linearity} (Definition \ref{defn:excessive-linearity}) is a condition on CT representatives of linearly growing automorphisms that is very closely related to the \emph{rich linearity} used in \cite{MunroPetyt} as an obstruction to the existence of coarse median structures on mapping tori. Question 2 of \cite{MunroPetyt} asks whether rich linearity characterizes mapping tori without coarse median structures; this is not quite the case (Example \ref{exmp:not-rich}), but Theorem \ref{exceptCor} shows that Munro-Petyt's question does not need to change much to have a positive answer.  
Excessive linearity can be read from a CT map, but has an interpretation in terms of \emph{principal lifts} of $\phi$ to $\Aut(\F)$ (Remark \ref{rem:principal-lifts}).  Excessive linearity is closely related to  the existing notion of \emph{exceptional paths} (see \cite[p. 42-43]{HM-20}) in the graph supporting a CT representative of the automorphism, but excessive linearity is a strictly more general property than having an exceptional path. 

Any automorphism which is rotationless unipotent polynomially growing (UPG) is represented by a CT map, perhaps after passing to a positive power depending only on $\F$.
In the linear case, we show another equivalence between excessive linearity and the mapping torus failing to be unbranched.
We combine this result, stated below, with Theorem \ref{thm:main} to prove Theorem \ref{exceptCor}.

\begin{thmi}\label{lem:unbranched-implies-one-Nielsen}
Suppose $\phi\in\Out(\F)$ is a rotationless UPG automorphism of at most linear growth.  Then the following statements are equivalent: 
\begin{enumerate}
    \item The mapping torus $\maptor$ of $\phi$ is not unbranched. \label{maptorunbranched}
    \item Any CT map $f:\mathbb G\to\mathbb G$ representing $\phi$ has excessive linearity.\label{onelinearedgepercycle} 
    \end{enumerate}
\end{thmi}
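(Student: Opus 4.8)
The plan is to push both conditions through one combinatorial--algebraic dictionary: on one side the \emph{linear families} of a CT, on the other the maximal virtually-$\F\times\integers$ subgroups of $\maptor$ organised by the \emph{principal lifts} of $\phi$ in the sense of Remark~\ref{rem:principal-lifts}. Since the identity is the only UPG automorphism of strictly sublinear growth, and in that case $\maptor=\F\times\integers$ is manifestly unbranched while its (identity) CT has no linear edges, I assume throughout that $\phi$ has linear growth. \emph{Step 1 (dictionary).} Fix a CT $f\colon\mathbb G\to\mathbb G$ for $\phi$. As $\phi$ is UPG of linear growth, each stratum of $f$ is a single edge, fixed or linear, a linear edge $E$ satisfying $f(E)=E\,w^{d_E}$ with $w$ a root-free closed Nielsen path, its \emph{axis}; the linear edges partition into finitely many families indexed by their axes. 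First I would assemble, from the structural results available earlier in the paper together with the Bestvina--Handel index theory of $\Fix$, the classification of the maximal virtually-$\F\times\integers$ subgroups of $\maptor$: up to conjugacy and commensurability these are the subgroups $H_v=\Fix(\Phi_v)\times\langle t_v\rangle$ attached to the principal vertices $v$ of $\mathbb G$, where $\Phi_v\in\Aut(\F)$ is the principal lift carried by $v$ (so $t_vxt_v^{-1}=\Phi_v(x)$) and $\Fix(\Phi_v)$ is read off from the fixed edges and exceptional/eigen Nielsen paths based at $v$; distinct principal vertices give pairwise-incommensurable blocks. The computational core is a \emph{combinatorial intersection formula}: on the $\F$--level $H_v\cap H_{v'}=\Fix(\Phi_v)\cap\Fix(\Phi_{v'})$, and this is non-cyclic precisely when some common axis $w$ is visible at both $v$ and $v'$, i.e.\ both lie in the subgraph spanned by the fixed edges and the edges of the $w$--family. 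I would also record the classification of the $\integers^2$ subgroups of $\maptor$ (up to conjugacy: $\langle w,t\rangle$ for $w$ an axis, and $\langle \text{root of a fixed element},\,t_v\rangle$), needed to deal with triple intersections that happen to be abelian.

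\emph{Step 2 (\eqref{onelinearedgepercycle}$\Rightarrow$\eqref{maptorunbranched}).} Unwinding Definition~\ref{defn:excessive-linearity}, excessive linearity exhibits a family, with axis $w$, that \emph{branches}; tracing the branch through the fixed subgraph produces three pairwise-distinct principal vertices $v_1,v_2,v_3$ at each of which the $w$--family is visible along a non-cyclic piece of $\Fix(\Phi_{v_i})$. By Step~1 the subgroups $H_{v_1},H_{v_2},H_{v_3}$ are pairwise-incommensurable maximal virtually-$\F\times\integers$ subgroups, and their triple intersection contains $\bigl(\Fix(\Phi_{v_1})\cap\Fix(\Phi_{v_2})\cap\Fix(\Phi_{v_3})\bigr)\times\langle t\rangle$, which is non-cyclic because all three fixed subgroups contain a common conjugate of $w$ together with at least one further commonly fixed element produced by the branch. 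Hence $\maptor$ is not unbranched.

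\emph{Step 3 (\eqref{maptorunbranched}$\Rightarrow$\eqref{onelinearedgepercycle}, and independence of the CT).} Conversely, suppose $\maptor$ is not unbranched: by Definition~\ref{defn:block-unbranched} there are pairwise-incommensurable maximal virtually-$\F\times\integers$ subgroups $K_1,K_2,K_3$ whose triple intersection is non-cyclic. Replacing each $K_i$ by a commensurable conjugate, Step~1 identifies it with $H_{v_i}$ for pairwise-distinct principal vertices $v_i$; feeding non-cyclicity of the triple intersection into the intersection formula (and, in the abelian case, into the $\integers^2$ classification) forces a common axis $w$ visible at all of $v_1,v_2,v_3$. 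Since the $v_i$ are pairwise distinct, the $w$--family cannot be a single unbranched chain through the fixed subgraph, i.e.\ it branches, which is exactly excessive linearity for this CT. Finally, the families of a CT and their incidence with the fixed subgraph are determined by $\phi$ up to the combinatorially inert non-uniqueness of CTs; alternatively, Steps~2--3 already display both \eqref{maptorunbranched} and \eqref{onelinearedgepercycle} as equivalent to the choice-free property ``$\maptor$ is not unbranched'', so excessive linearity holds for one CT iff for every CT.

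\emph{The main obstacle} is Step~3, and in it the implication ``the triple intersection is non-cyclic $\Rightarrow$ the shared-axis configuration is a genuine branch'': this rests on the \emph{complete} classification of maximal virtually-$\F\times\integers$ subgroups (not merely a sufficient supply of them), a workable intersection formula, careful bookkeeping to handle abelian triple intersections, and the verification that absence of branching in \emph{every} family really does force \emph{all} triple intersections of incommensurable blocks to be cyclic. This reverse-engineering of the block structure of $\maptor$ from a CT is where I expect most of the work to lie; Step~2 and the CT-independence should be comparatively routine.
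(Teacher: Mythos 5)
Your proposal correctly identifies the principal-lift philosophy (the paper's Remark~\ref{rem:principal-lifts} makes this same point) and correctly locates the difficulty in the block classification and intersection formula of Steps~1 and~3. But as written it is a plan rather than a proof: the ``combinatorial intersection formula'' $H_v\cap H_{v'}=\Fix(\Phi_v)\cap\Fix(\Phi_{v'})$, the claim that this is non-cyclic precisely when a common axis is visible at both $v$ and $v'$, and the classification of blocks as $\Fix(\Phi_v)\times\langle t_v\rangle$ are never established, and you yourself flag Step~3 as ``where I expect most of the work to lie''. The paper does that work, but by a genuinely different route: rather than computing intersections of fixed subgroups in $\F$, it reduces ``$\maptor$ not unbranched'' to ``some black vertex of $\Delta$ has valence $\geq 3$'' (Proposition~\ref{prop:split-over-tori} together with Lemma~\ref{lem:linear-unbranched}), and then reads black-vertex valences directly off the CT by analysing the Dahmani--Touikan cylinder complexes $Z(u_e)$ over the subtrees $\Omega_e$ (Lemmas~\ref{lem:omega-e-nielsen} and~\ref{lem:omega-no-saddle}, Corollary~\ref{cor:delta-valence-bound}). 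This geometric picture is what makes the bookkeeping tractable, and a purely algebraic intersection formula would have to recover the same information.

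Two specific weaknesses even at the level of the outline. In Step~2 you assert that excessive linearity ``produces three pairwise-distinct principal vertices $v_1,v_2,v_3$''; but the three items counted in $|E(u)\cup T(u)|\geq 3$ can all be anchored at a \emph{single} vertex of $\GG$ (as in Gersten's rose, Example~\ref{exmp:gersten}, where $\GG$ has one vertex). What one actually needs are three distinct \emph{blocks}, i.e.\ three distinct white vertices of the Bass--Serre tree, and separating ``distinct blocks'' from ``distinct vertices of $\GG$'' is precisely what the cylinder subtree $\Omega_e$ records (distinct leaves of $\Omega_e$ lying over the same vertex of $\GG$ give distinct conjugates of the same $H_v$). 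In Step~3, the converse rests on the delicate fact that when $|E(u)|=2$ and the central vertex space has cyclic fundamental group, then $T(u)=\emptyset$; the paper proves this by showing $X_{\mathfrak v(\omega)}$ contains no new loops and hence consists of fixed edges, and this rigidity has no obvious analogue in the intersection-formula language. So the blueprint is morally aligned with the paper and honestly flagged as incomplete, but the missing pieces are substantial, and the cylinder-complex analysis is how the paper actually fills them in.
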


Theorem \ref{lem:unbranched-implies-one-Nielsen} is proved by analyzing the tree of cylinders splitting of $\maptor$ from the proof of \cite[Thm. 2.7]{DahmaniTouikan}. 
It is of independent interest because it enables one to check systematically from a CT representative whether the mapping torus is hierarchically hyperbolic.  It then follows from \cite{feighn-handel:algorithmic-constructions} that there is an algorithm that decides, given an element of $\Out(\F)$, whether the mapping torus is hierarchically hyperbolic. 

Relative train track theory plays a direct role in the proofs of Theorems \ref{exceptCor} and \ref{lem:unbranched-implies-one-Nielsen} only.
The proof of Theorem \ref{thm:main} is based on a collection of structural results about mapping tori---the relative hyperbolicity results from \cite{DahmaniLi,Ghosh}, the cyclic hierarchy from \cite{Macura:detour}, and the JSJ-type results from \cite{AndrewMartino:splitting,DahmaniTouikan}---along with tools from \cite{MunroPetyt} and \cite{HHSII}. 

Finally, a special feature of our proof of Theorem \ref{thm:main} is that we obtain an explicit hierarchically hyperbolic structure in which the coordinate spaces are either bounded or are one of several well-understood types summarized as follows.
The non--$\nest$--maximal spaces are quasi-trees (many of them Bass-Serre trees of some well-known splitting), and in particular the $\nest$--minimal spaces are quasi-lines.
In the case of relatively hyperbolic groups, the $\nest$--maximal space is the coned-off Cayley graph.


\tocless{\subsection{Further questions}}{\label{subsec:questions}}
We now propose some questions.

\subsubsection{(Non)-existence of Cannon-Thurston maps} Hierarchically hyperbolic groups are compactified using the hierarchical  boundary (\emph{HHS boundary}) from \cite{DHSbound}.  One description is as follows.  Any two points in an HHS/HHG are connected by a \emph{hierarchy path} \cite[Thm. 4.4, Thm. 4.5]{HHSII}, which is 
a uniform quasigeodesic projecting to a uniform unparametrized quasigeodesic in the hyperbolic spaces. Imitating 
the constructions of the Gromov boundary and the visual boundary of a CAT(0) space, a boundary for HHS/HHGs was constructed in \cite{DHSbound, DHScorr} by taking as boundary points 
asymptotic classes of “hierarchy rays” emanating from a
fixed basepoint and having a ``well-defined gradient''.    For hyperbolic groups, the HHS and Gromov boundaries coincide \cite[Thm. 4.3]{DHSbound}. We suggest exploring HHS boundaries of mapping tori satisfying the conditions in Theorem \ref{thm:main}.

For hyperbolic groups $H\leq G $ \emph{the Cannon–Thurston map} is a continuous map
$\widehat \iota: \hat H\rightarrow \hat G$ that extends the inclusion $\iota: \mathcal C(H) \hookrightarrow \mathcal C(G)$ where $\mathcal C (H)$ and $\mathcal C(G)$ are Cayley graphs for $H$  and $G$, respectively, given by choosing a finite generating set for $H$ and extending it to a finite  generating set for $G$; such an extension may or may not exist. For hyperbolic free by cyclic groups,  Cannon--Thurston maps exist by work of Mitra (\cite{MitraCT}), and by work of Bowditch (\cite{BowRH}), a Cannon--Thurston map for suitable subgroups $\F \triangleleft G$ of a relatively hyperbolic group $G$ also exists. However, there are examples of relatively hyperbolic free by cyclic groups for which the Cannon--Thurston map does not exist (\cite{BGGGS}.) We ask:%

\begin{question}\label{BdryCor} Let $\maptor$ be an unbranched mapping torus and let $H<\maptor$ be a HHG subgroup. When does the inclusion map $H \hookrightarrow  \maptor=\F\rtimes_\phi\mathbb Z$  extend to a continuous map $ \boundary_{HH}H \longrightarrow \boundary_{HH}\maptor$ between HHG boundaries? 
\end{question}

When $H<\maptor $ is hierarchically quasiconvex, the answer is positive by \cite{DHSbound}. This case includes those where: $H$ is one of the peripheral polynomial mapping tori, in the relatively hyperbolic case; $H$ is the centralizer of some element; $H$ is a maximal abelian subgroup; $H$ is one of the vertex groups in the cyclic hierarchy from Proposition \ref{prop:cyclic-hierarchy}, etc.

When $H$ is a free subgroup, then $\boundary_{HH}H$ is the Gromov boundary.  In many cases, $H$ is hierarchically quasiconvex, so the answer is positive. However if $H$ is a free \textbf{normal} subgroup, e.g. $H=\F$, then we suspect the answer is positive only if $\phi$ is atoroidal (the converse follows from \cite{MitraCT} since hierarchical boundaries of hyperbolic groups are just the Gromov boundaries). We suspect that the answer is negative if $H$ is free normal and $\maptor$ is not hyperbolic, using arguments similar to \cite{BGGGS}. In fact we conjecture:

\begin{conj}\label{conj:no-cannon-thurston}    
 Let $G$ be a HHG and  and $H <G$ an infinite hyperbolic
normal subgroup of infinite index. Then the Cannon--Thurston map does not exist for $(H,G)$ unless either $G$ is hyperbolic or $H$ is hierarchically quasiconvex in some HHG structure on $G$.
\end{conj}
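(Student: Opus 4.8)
We sketch a possible approach to Conjecture~\ref{conj:no-cannon-thurston}. The plan is to argue by contraposition: assume that $G$ is not hyperbolic, that $H$ is not hierarchically quasiconvex in any HHG structure on $G$, and that the Cannon--Thurston map $\widehat\iota\co\partial H\to\partial_{HH}G$ exists (here $\partial_{HH}G$ denotes the HHS boundary of \cite{DHSbound, DHScorr}, which we take to be compact, e.g.\ assuming $G$ colourable), and derive a contradiction. We may assume $H$ is non-elementary: if $H$ is virtually cyclic then $\partial H$ is a $G$--invariant pair of points in $\partial_{HH}G$, and a separate, more elementary argument should show this forces $H$ to coarsely stabilise a bi-infinite hierarchy path, hence to be hierarchically quasiconvex. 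The first step is the standard fact that $\widehat\iota$ is $G$--equivariant: for $g\in G$, conjugation $c_g\co h\mapsto ghg^{-1}$ is a quasi-isometry of the Cayley graph of $H$ (since $H\triangleleft G$), while on the Cayley graph of $G$ one has $\iota\circ c_g=L_g\circ R_{g^{-1}}\circ\iota$, where the right translation $R_{g^{-1}}$ moves every point a distance exactly $\length g$ and so induces the identity on $\partial_{HH}G$; passing to the boundary gives $\widehat\iota\circ\partial c_g=g\cdot\widehat\iota$. Hence $\Lambda:=\widehat\iota(\partial H)$ is a compact $G$--invariant subset of $\partial_{HH}G$, and $\supp\bigl(\widehat\iota(hp)\bigr)=h\cdot\supp\bigl(\widehat\iota(p)\bigr)$ for $h\in H$.

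Second, I would run a dichotomy using the $\nest$--maximal coordinate space $\mathcal C S$. If in \emph{some} HHG structure on $G$ the set $\pi_S(H)$ is bounded, then by the distance formula of \cite{HHSII} the subgroup $H$ is coarsely contained in a standard product region $P_U\cong F_U\times E_U$ ($U\propnest S$); using that the hyperbolic group $H$ contains no coarse copy of $\integers^2$ one deduces that $H$ is coarsely a quasiconvex subset of a single factor, hence hierarchically quasiconvex in $G$ --- contradicting the assumption. So $\pi_S(H)$ is unbounded in every HHG structure; fix one. Since $\mathcal C S$ is hyperbolic and $H$ is non-elementary, $H$ acts loxodromically on $\mathcal C S$. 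Now I would use non-hyperbolicity of $G$ together with the failure of hierarchical quasiconvexity of $H$ to locate a proper domain $U\propnest S$ for which $\pi_U(H)$ is unbounded and $H_U:=H\cap g\stabilizer_G(U)g^{-1}$ is infinite for some $g\in G$; in the relatively hyperbolic examples of Theorem~\ref{thm:main} one takes $\mathcal C S$ the coned-off Cayley graph, $U$ a peripheral polynomial mapping torus, and $H_U$ the corresponding polynomially growing free factor --- infinite, and coarsely distorted inside $\stabilizer_G(U)$.

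Third, I would convert this configuration into the non-existence of $\widehat\iota$, adapting the mechanism of \cite{BGGGS}. The coset $g\stabilizer_G(U)g^{-1}$ is ``coned off'' inside $\mathcal C S$ but is seen uncollapsed by $\pi_U$, while $H_U$ is infinite and coarsely distorted inside it; exploiting this discrepancy one builds a geodesic ray $r$ in the Cayley graph of $H$, converging to some $p\in\partial H$, whose image $\iota\circ r$ \emph{backtracks infinitely often} in the Cayley graph of $G$ --- concretely, $\pi_U(\iota(r(n)))$ is unbounded but approaches neither end of $\mathcal C U$, so the sequence $\iota(r(n))$ accumulates on at least two distinct points of $\partial_{HH}G$. (This step uses the description of convergence in $\partial_{HH}G$ from \cite{DHSbound, DHScorr}.) Since $r(n)\to p$ in $\partial H$ but $\iota(r(n))$ does not converge in $\partial_{HH}G$, the inclusion admits no continuous boundary extension at $p$, contradicting the existence of $\widehat\iota$.

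The main obstacle is the first italicised assertion in the second step, which is the genuine crux: one must show \emph{unconditionally} --- from ``$G$ a non-hyperbolic HHG and $H$ a normal hyperbolic subgroup of infinite index that is not hierarchically quasiconvex'' alone --- that some proper domain $U\propnest S$ has $\pi_U(H)$ unbounded with $H\cap g\stabilizer_G(U)g^{-1}$ infinite. This is in effect a structure theorem for infinite normal hyperbolic subgroups of HHGs: non-hyperbolicity of $G$ should supply a proper product-like domain, and normality of $H$ together with the absence of hierarchical quasiconvexity should force $H$ to interact essentially with it, but making this precise appears to require understanding how $\partial H$ embeds in $\partial_{HH}G$ far more finely than is presently available. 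The second obstacle is the backtracking construction itself: this is the technical heart of \cite{BGGGS} in the relatively hyperbolic case and must be carried out for arbitrary product-like $U$ and arbitrary infinite distorted $H_U$. A realistic intermediate target, sufficient for the present paper, is the case $G=\F\rtimes_\phi\integers$ an unbranched non-hyperbolic mapping torus with $H=\F$, where $\mathcal C S$, the peripheral structure, the domains $U$, and the distortion of $\F\cap(\text{peripheral})$ are all explicit, so that only the generalisation of the construction of \cite{BGGGS} remains.
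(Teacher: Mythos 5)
The statement you are trying to prove is not a theorem of the paper: it is stated as Conjecture~\ref{conj:no-cannon-thurston} in the ``Further questions'' part of the introduction, and the paper offers no proof of it (the authors only say they \emph{suspect} the answer is negative in the free-normal case, ``using arguments similar to \cite{BGGGS}''). So there is no proof in the paper to compare against, and your proposal should be judged as a proof attempt of an open problem.

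As such, it is not a proof but a strategy sketch, and the gaps are genuine --- you identify the main one yourself. The crux in your second step (producing, from non-hyperbolicity of $G$ and non-hierarchical-quasiconvexity of $H$ alone, a proper domain $U\propnest S$ with $\pi_U(H)$ unbounded and $H\cap g\stabilizer_G(U)g^{-1}$ infinite) is exactly the missing structure theory, and nothing in the present paper or in \cite{DHSbound,HHSII} supplies it. Even before that, your reduction ``$\pi_S(H)$ bounded $\implies$ $H$ coarsely contained in a single standard product region'' is not justified: bounded projection to the $\nest$--maximal domain does not by itself confine a subgroup to one product region (the omnibus-type results give a canonical pairwise-orthogonal family of domains with unbounded projection, but coarse containment in a single $P_U$ is a stronger statement needing an argument). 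Finally, the third step --- the backtracking construction generalising \cite{BGGGS} from relatively hyperbolic groups to arbitrary product-like domains in an HHG --- is the technical heart and is entirely deferred. Your closing suggestion, to first treat $G=\F\rtimes_\phi\integers$ unbranched and non-hyperbolic with $H=\F$, is a sensible and concrete intermediate target consistent with what the authors themselves propose, but it remains to be carried out.
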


\subsubsection{Asymptotic cones}  Another question concerns Corollary \ref{cor:intro-consequences}.\eqref{item:cone-median}.  Recall that this says that asymptotic cones of unbranched free by cyclic groups are real cubings, since this is true of all HHGs.  Real cubings are a special class of median spaces.  We ask:

\begin{question}\label{question:no-median}
Let $\maptor$ be a free by cyclic group that is not unbranched, and hence contains a $2$--RBF.  Can the proof of \cite[Thm. 4.3]{MunroPetyt} be adapted to show that no asymptotic cone of $\maptor$ is bilipschitz equivalent to a median metric space?  
\end{question}

The answer is likely positive; one should bound the rank of such a median structure (using something like \cite[Cor. 3.7]{Haettel:lattices}) and then imitate the argument from \cite[Thm. 4.3]{MunroPetyt}.

\subsubsection{Other nonpositive curvature properties}
Semihyperbolicity was introduced by Alonso and Bridson in \cite{AB}, and is of interest, for instance, because it is a necessary condition for the existence of biautomatic structures, and because semihyperbolicity is sufficient for solving the conjugacy problem.  As far as we know, semihyperbolic and biautomatic free by cyclic groups have not been classified, but non-biautomatic examples exist \cite{BradyBridson}. 

Semihyperbolicity is a consequence of other (coarse) nonpositive curvature properties.  To begin with, semihyperbolicity follows from the existence of a proper, cocompact action on a CAT(0) space.  A classical example due to Gersten \cite{Gersten} shows that even linear-growth mapping tori need not be CAT(0), and in Example \ref{exmp:gersten}, we examine the JSJ decomposition to see that this example fails to be unbranched.

On the other hand, Example \ref{exmp:cat(0)-non-hhg} shows that $\maptor$ can be CAT(0) without being unbranched/an HHG.  There are many such examples.  This is not surprising: as a general matter, the existence of CAT(0) structures on a group says little about hierarchical hyperbolicity.  However, CAT(0) \emph{cubical} structures often give rise to HHG structures \cite{HHSI,HagenSusse} (and also always give biautomatic structures \cite{NibloReeves:biautomatic}).  But Example \ref{exmp:not-via-specialness} illustrates that Theorem \ref{thm:main} gives HHGs that are not even virtually cubical.  All of this motivates:

\begin{question}\label{question:non-cat0-hhg}
Does there exist $\phi\in\Out(\F)$ such that $\F\rtimes_\phi\integers$ is unbranched (and hence an HHG), but does not act geometrically on a CAT(0) space?
\end{question}

Corollary \ref{cor:intro-consequences}.\eqref{item:asymp-cat} says that unbranched mapping tori act geometrically on asymptotically CAT(0) spaces, so the non-HHG CAT(0) examples show that the converse of that statement cannot hold (since CAT(0) spaces are asymptotically CAT(0)), while an answer to Question \ref{question:non-cat0-hhg} would determine whether or not Corollary \ref{cor:intro-consequences} can be strengthened by replacing asympotically CAT(0) spaces with CAT(0) spaces.

Recall that Corollary \ref{cor:intro-consequences} establishes semihyperbolicity for unbranched $\maptor$ in two independent ways, one of which is via the existence of a proper cocompact action on a proper \emph{coarsely injective space}, and hence a metrically proper cobounded action on a (not necessarily proper) injective space, using results of Haettel-Hoda-Petyt \cite{haettel_coarse_2020}.  Harry Petyt has informed us in personal communication that there are free by cyclic groups that are not coarsely injective, which gives a sense in which item \eqref{item:injective-space} is sharp.  

Stronger than coarse injectivity is the existence of a proper, cocompact action on a locally finite \emph{Helly graph}; the study of groups with this property was initiated in \cite{CCGHO:Helly}, where it is shown that groups admitting such an action enjoy strong forms of some of the properties discussed above: they act properly and cocompactly on proper injective spaces (strengthening coarse injectivity), and they are biautomatic (strengthening semihyperbolicity).  

\begin{question}\label{question:coarsely-helly}
For which free by cyclic $\maptor$ is there a proper cocompact action on a coarsely injective space?  On a Helly graph?  On a proper injective space?  Are there non-CAT(0) examples with such actions?
\end{question}

Interestingly, some of the aforementioned non-coarsely injective examples (which will appear in forthcoming work by Haettel-Hoda-Petyt) are CAT(0).  They also have examples that are Helly (hence biautomatic) and CAT(0), but are not coarse median (hence they do not satisfy any of the conditions in Theorem \ref{thm:main}).       

\tocless{\subsection{Plan of the paper}}{}
In Section \ref{sec:P} we give all the definitions we will need in the paper. 
Section \ref{sec:unbranched-blocks} elaborates on our algebraic obstruction \emph{unbranched blocks}, and Section \ref{sec:free-by-Z} details the graphs of groups decompositions  corresponding to free by cyclic groups on which our proofs based. Proof of Theorem \ref{lem:unbranched-implies-one-Nielsen} is also included in this section. 
Section \ref{sec:coarse-median-implies-unbranched} details how to go from coarse median to unbranched, and arguments in Section \ref{sec:build-HHG} prove how unbranched blocks imply hierarchical hyperbolic structure.  Section \ref{subsec:excessive-linearity} deals with excessive linearity and CT representatives.  Finally, in Section \ref{sec:proof} we put together lemmata from earlier sections to prove our remaining theorems.  Section \ref{sec:exs} contains examples. 

\tocless{\subsection{Acknowledgments}}{} We thank Zachary Munro and Harry Petyt for helpful discussions about \cite{MunroPetyt} and comments on an earlier version, Alessandro Sisto for a question motivating Proposition \ref{prop:remove-virtual}, Rob Kropholler for an informative discussion about CAT(0) structures, and Harry Petyt for explaining the (non)-coarsely injective examples. We are grateful to Jack Button for a correction in Example \ref{exmp:not-via-specialness}, Giorgio Mangioni for a discussion related to the linear case of Proposition \ref{prop:remove-virtual}, to Naomi Andrew, Monika Kudlinska, and Armando Martino for extremely helpful clarifications on the superlinear case of the same proposition (see Claim \ref{claim:naomi-monika-jp}, whose proof was explained to us by Naomi Andrew).  We finally thank Matt Durham and an anonymous referee for various helpful comments.  

\section{Preliminaries}\label{sec:P}

\subsection{Relative train track maps and CT maps}\label{subsec:IRTT}

We now summarise the parts of the theory of improved relative train track maps that we will use; see \cite{BFH-00,BFH:kolchin} for more details.

  Given a marked graph $\mathbb G$, a \textit{filtration}  of $\mathbb G$ is a strictly increasing sequence $\mathbb G_0 \subset \mathbb G_1 \subset \cdots \subset \mathbb G_k = \mathbb G$ of subgraphs $\mathbb G_r$ with no isolated vertices. The filtration is \textit{$f$-invariant} if $f(\mathbb G_r) \subset \mathbb G_r$ for all $r$, where $f:\mathbb G\to\mathbb G$ is a homotopy equivalence sending vertices to vertices and edges to paths.
 The \textit{$r^{\text{th}}$ stratum} is the subgraph $H_r$ consisting of $\mathbb G_r \setminus \mathbb G_{r-1}$ together with the  endpoints of each edge not in $\mathbb G_{r-1}$.
 
If $\lambda=1$ then $H_r$ is a \textit{nonexponentially growing (NEG) stratum}  whereas if $\lambda>1$ we say that $H_r$ is an \textit{exponentially growing (EG) stratum}, where $\lambda $ is the Perron-Frobenious eigenvalue associated to the stratum \cite{BestvinaHandel:ttm}.	

An automorphism $\phi\in\out$ can be represented by a  homotopy equivalence $f:\mathbb G\rightarrow \mathbb G$ that takes vertices to vertices and edges to edge-paths of a marked graph $\mathbb G$ with marking $\rho: R_n \rightarrow \mathbb G$, called a \textit{topological representative}. Topological representative $f$ preserve the marking of $\mathbb G$, in other words $\overline{\rho}\circ f \circ \rho: R_n \rightarrow R_n$ represents $R_n$. A nontrivial path $\alpha$  in G is a \emph{periodic Nielsen path} if $f^k_{\#}(\alpha)=\alpha$ for some $k$, where the smallest such  $k$ is called the \emph{period}.
When $k=1$, we simply call $\alpha$ a \emph{Nielsen path}; if in addition $\alpha$ is a (nontrivial) immersed cycle, we call $\alpha$ a \emph{Nielsen cycle}.

\subsubsection{Improved relative train track map (IRTT)} \label{IRTT}
Relative train track maps are topological representatives whose exponentially-growing strata satisfy certain extra conditions \cite{BestvinaHandel:ttm}. Since our automorphisms will also have non-exponentially-growing (NEG) strata, we will also add conditions on the zero strata and on the NEG strata. This will result in a topological representative $f$, called an \emph{improved relative train track (IRTT)}, satisfying Theorem \ref{thm:RTT}.

If $\phi \in \out$ is polynomially growing and  has unipotent image in $GL_n(\mathbb Z)$ (in other words $\phi$ induces a unipotent action on
on $H_1(\F, \mathbb Z)$); we call $\phi$  \emph{unipotent polynomially growing (UPG)}. 

\begin{thm} [Theorem 5.1.5 \cite{BFH-00}]\label{thm:RTT}
For every UPG outer automorphism
$\phi\in \out$ there is a  relative train track map $f:\mathbb G\rightarrow \mathbb G$ with a filtration $\mathbb G_0 \subset \mathbb G_1 \subset \cdots \subset \mathbb G_k = \mathbb G$ such that $f$ represents  $\phi$ and  $f$ has the following properties.
\begin{itemize}
\item Filtration gives a $\phi$--invariant factor system.
 \item Every periodic Nielsen path has period one.
\item Each $\overline{\mathbb G_i\setminus \mathbb G_{i-1}} $ is a single edge $E_i$ satisfying $f(E_i) = E_i \cdot u_i $ for some
closed (immersed) path $u_i$ (called \emph{suffix} of $E_i$) whose edges  are in $\mathbb G_{i-1}$. In other words, for all $k>0$,
\[ [f^k(E_i)] = E_i \cdot u_i \cdot  [f(u_i)]\cdots [f^{k-1}(u_i)]. \]
   
    \item For every vertex $v\in \mathbb G$, $f(v)$ is a fixed point. 
\end{itemize}
\end{thm}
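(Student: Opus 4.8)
\emph{Proof proposal.} The plan is to derive Theorem \ref{thm:RTT} from the relative train track theorem of Bestvina--Handel \cite{BestvinaHandel:ttm}, using polynomial growth to eliminate exponentially growing strata and unipotence to rigidify the non-exponentially growing ones. I would start from a relative train track representative $f_0\co\mathbb G_0\to\mathbb G_0$ of $\phi$ with an $f_0$--invariant filtration. Since $\phi$ is UPG (hence rotationless), no passage to a power is needed; the work is to run the Bestvina--Handel improvement moves — subdivision at periodic points, tightening, collapsing $f$--invariant forests, eliminating zero strata, and sliding suffixes across lower edges — so that all four bulleted properties hold at once for the resulting map $f\co\mathbb G\to\mathbb G$.

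\emph{No EG strata and single-edge strata.} The transition matrix of $f_0$ is block upper triangular along the filtration. If some stratum $H_r$ were exponentially growing, with Perron--Frobenius eigenvalue $\lambda>1$, the number of $H_r$--edges crossed by $f_0^i(\sigma)$ would grow like $\lambda^i$ for any circuit $\sigma$ meeting $H_r$, contradicting polynomial growth of $\phi$. Hence every stratum is NEG and each diagonal block is the permutation matrix by which $f_0$ rearranges the top edges of its stratum. The same permutations appear on the diagonal of the induced upper triangular map on $H_1(\F,\integers)$ in a filtration-adapted basis, and since $\phi$ is unipotent all of its eigenvalues are $1$; a permutation matrix over $\integers$ whose eigenvalues are all $1$ is the identity, so each top edge is fixed modulo a path strictly below it. Reordering the filtration so that each stratum is a single edge $E_i$ then yields $f(E_i)=E_i\cdot u_i$ with $u_i\subset\mathbb G_{i-1}$, which is the third bullet; the displayed formula for $[f^k(E_i)]$ follows by induction on $k$ by applying $f_\#$ and tightening.

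\emph{Fixed vertices, immersed closed suffixes, period-one Nielsen paths, invariant factor system.} The map permutes the finite vertex set, and rotationlessness makes every periodic vertex a fixed point; subdividing so that the endpoints of all the $E_i$ are fixed vertices makes every vertex fixed (the fourth bullet), and then $f(E_i)=E_iu_i$ forces $u_i$ to be a loop at the terminal endpoint of $E_i$, which tightening makes an immersed cycle. Since the suffixes are now individually fixed rather than permuted, a downward induction on the filtration shows every periodic Nielsen path has period one: a periodic Nielsen or exceptional path through $E_i$ reduces, below its top edge, to a periodic path in $\mathbb G_{i-1}$, which by induction is already fixed. Finally, each $\mathbb G_r$ is $f$--invariant, so the conjugacy classes of the fundamental groups of the noncontractible components of the cores of the $\mathbb G_r$ assemble into a nested $\phi$--invariant free factor system, giving the first bullet.

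\emph{Main obstacle.} The real content is not the specialisations above but showing that the improvement process terminates and simultaneously produces all four normal forms — that finitely many of the listed moves suffice without reintroducing an EG stratum, un-fixing a vertex, recreating a zero stratum, or violating the relative train track inequalities on the NEG strata. This is the combinatorial heart of \cite{BFH-00}, handled by a lexicographic complexity argument. The UPG hypothesis makes it significantly easier than the general relative train track theorem, since there is no EG stratum to interact with, but verifying termination and the stability of the NEG conditions under each move is where the genuine difficulty lies.
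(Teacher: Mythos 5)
This statement is not proved in the paper at all: it is imported verbatim as a citation to \cite{BFH-00} (the single-edge, trivial-permutation form for UPG automorphisms is really Theorem 5.1.8 there, with Theorem 5.1.5 being the general improved relative train track theorem it rests on). So there is no in-paper argument to compare against; the only fair reading of your proposal is as a reconstruction of the Bestvina--Feighn--Handel proof, and as such it is a reasonable outline. You correctly identify the two specialisations that UPG buys (polynomial growth kills EG strata; unipotence kills the permutation on the NEG edges) and you honestly flag that the genuine content --- termination of the improvement moves under a lexicographic complexity, and the period-one statement for Nielsen paths --- lives in \cite{BFH-00} and is not reproduced by your sketch. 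Given that the paper itself defers entirely to that reference, this is an acceptable stopping point.

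One step is stated too loosely to survive as written. The edges of $\mathbb G$ are not a basis of $\homology_1(\F,\integers)$, so the claim that the stratum permutations "appear on the diagonal of the induced upper triangular map on $\homology_1(\F,\integers)$ in a filtration-adapted basis" does not literally parse. The correct version of the argument runs the eigenvalue computation on the relative homology groups $\homology_1(\mathbb G_r,\mathbb G_{r-1};\integers)$, which \emph{are} freely generated by the edges of $H_r$ and on which $f$ acts by the (signed) permutation block; one then assembles these via the filtration (long exact sequences, after reducing so that each $\mathbb G_r$ has noncontractible core) to conclude that a nontrivial permutation would contribute a nontrivial root of unity to the spectrum of $\phi_*$ on $\homology_1(\F,\integers)$, contradicting unipotence. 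This is exactly how \cite{BFH-00} argue, so the fix is routine, but as stated your sentence conflates absolute and relative homology. The remaining points (UPG implies rotationless so no power is needed; the downward induction for period-one Nielsen paths) are true but are themselves theorems requiring the machinery you are deferring to, so they should be attributed rather than asserted.
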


The stratum $H_i=E_i$ is a \emph{linear stratum} if $u_i$ represents a conjugacy class in $\pi_1\mathbb G$ that is also represented by a Nielsen path. If $H_i=E_i$ is a linear stratum, and if  $u_i $ is a non-zero power of a cyclic permutation of  a Nielsen cycle $u$, we say that Nielsen cycle $u$ \emph{supports} the linear stratum $E_i$.  (We emphasize that a \emph{cycle} in $\mathbb G$ is a locally injective map $S\to \mathbb G$ of graphs, where $S$ is a graph homeomorphic to $S^1$.  In many papers on train tracks, this is called a \emph{circuit}; we are using the terminology from \cite{MunroPetyt}.)
\subsubsection{Completely split improved relative train track (CT) maps}
Further improvements of IRTT maps lead to the construction of \emph{completely split improved relative train track (CT)} maps. Below we summarize their properties that apply to maps representing UPG automorphisms. For the  full list of  properties we refer the reader to [Definition 4.7 \cite{FeighHandel:recognition}].   
\begin{lem} \cite[Definition 4.11, Lemma 4.42,Theorem 4.28]{FeighHandel:recognition}
There exists $k\geq 1$ depending only on $\F$, so that given any $\phi\in \, UPG(\F)$, there exists an IRTT map $f:\mathbb G\rightarrow \mathbb G$ representing $\phi^k$ (called a CT-map) which satisfies the following additional properties:
\begin{itemize}
\item(\textbf{Periodicity}) Each periodic edge and each periodic vertex is fixed.
\item(\textbf{Linear Edges}) For each linear edge $E_i$ there is a closed root-free Nielsen path $w_i$ such that $f(E_i)=E_iw^{d_i}_i$ for some $d_i\neq 0$. If  $E_i$ and $E_j$ are distinct linear
edges with the same axes then $w_i =w_j$ and $d_i\neq d_j$.
\item (\textbf{NEG Nielsen paths}) If the edges in an indivisible Nielsen path $\sigma$ that are in the highest stratum belong
to an NEG stratum then there is a linear edge $E_i$ with $w_i$ as in item ({\bf Linear Edges})
and $k\neq 0$ such that $\sigma=E_iw^k_i\overline {E_i}$.\qedhere
\end{itemize}
\end{lem}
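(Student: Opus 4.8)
This lemma is essentially a repackaging of the CT existence theorem of Feighn--Handel, so the plan is to cite the relevant results from \cite{FeighHandel:recognition} and unpack the clauses of their definition of a CT map in the UPG setting.

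First I would fix the constant $k$. By the uniform rotationless-power statement in \cite{FeighHandel:recognition}, there is $k=k(\rank(\F))\geq 1$ with $\psi^k$ rotationless for every $\psi\in\out$. Since a power of a unipotent matrix is unipotent and polynomial growth is inherited by powers, $\phi^k$ is again UPG whenever $\phi$ is; hence it suffices to prove the statement with $\phi^k$ replaced by $\phi$, under the additional hypothesis that $\phi$ is rotationless.

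Next I would invoke \cite[Theorem~4.28]{FeighHandel:recognition}: a rotationless $\psi\in\out$ is represented by a CT map $f:\mathbb G\to\mathbb G$ in the sense of \cite[Definition~4.11]{FeighHandel:recognition}. Such an $f$ is in particular a relative train track map with an $f$--invariant filtration, and since $\phi$ is polynomially growing there are no EG strata; the CT axioms then force each stratum above the maximal $f$--fixed subgraph to be a single NEG edge $E_i$ with $f(E_i)=E_iu_i$ for a closed immersed path $u_i$ in the lower filtration level, and to fix every vertex. Together with the UPG relative train track theory of \cite{BFH-00}, this exhibits $f$ as an IRTT satisfying the conclusions of Theorem~\ref{thm:RTT}, so what remains is to extract the three displayed bullets.

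Finally I would read each bullet off the labeled clauses of \cite[Definition~4.11]{FeighHandel:recognition}, using \cite[Lemma~4.42]{FeighHandel:recognition} to organize the NEG structure. \textbf{Periodicity} is the clause asserting that periodic directions of a CT are fixed, which gives that each periodic edge and periodic vertex is fixed. \textbf{Linear Edges} is the clause defining a \emph{linear} NEG edge: $E_i$ is linear exactly when $f(E_i)=E_iw_i^{d_i}$ for a closed root-free Nielsen path $w_i$ and some $d_i\neq 0$, with the normalization that distinct linear edges sharing an axis have a common $w_i$ and distinct multipliers. \textbf{NEG Nielsen paths} is the clause constraining indivisible Nielsen paths, specialized to the UPG case in which every indivisible Nielsen path has NEG highest stratum: it then has the form $E_iw_i^k\overline{E_i}$ for a linear edge $E_i$. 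The only real obstacle is the bookkeeping in this last step: Feighn--Handel's definition is a long list of conditions phrased for arbitrary rotationless automorphisms (involving EG strata, zero strata, and non-linear indivisible Nielsen paths), and one must check that the UPG hypothesis genuinely removes all of those cases and that their notions of ``linear edge'' and ``axis'' coincide with the ones used elsewhere in this paper. This is internal to \cite{FeighHandel:recognition,BFH-00} rather than a new argument.
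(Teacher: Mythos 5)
Your proposal is correct and matches what the paper does: the lemma is stated with the citation to \cite[Definition 4.11, Lemma 4.42, Theorem 4.28]{FeighHandel:recognition} standing in for a proof, i.e.\ one takes the uniform rotationless power, applies the CT existence theorem, and reads the three bullets off the clauses of the CT definition as simplified by the UPG (no EG strata) hypothesis. Your write-up just makes explicit the bookkeeping the paper leaves implicit.
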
 
The following lemma summarizes an additional crucial property of CT--maps that we use repeatedly in this paper:

\begin{lem}[Lemma 4.21 \cite{FeighHandel:recognition}]\label{lem:suffix} Let $f: \mathbb G \rightarrow \mathbb G$ be a CT map. Then every NEG stratum $H_i$ is a single edge $E_i$. If $E_i$ is not contained in $Fix(f)$ then there is a non-trivial closed path $u_i\subset \mathbb G_i$ such
that  $f(E_i) = E_i\cdot u_i$. Moreover $u_i$ forms a cycle and the turn $(u_i, \overline{u_i})$ is legal.
\end{lem}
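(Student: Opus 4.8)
The statement is \cite[Lemma~4.21]{FeighHandel:recognition}, restated here for later use, so strictly speaking there is nothing new to prove; the plan below only sketches, for the reader's convenience, how it is extracted from the CT axioms of \cite[Definition~4.7]{FeighHandel:recognition} together with the relative-train-track properties already recorded in Theorem~\ref{thm:RTT}. There are three assertions to justify: every NEG stratum $H_i$ is a single edge $E_i$; for $E_i\notin\mathrm{Fix}(f)$ one has $f(E_i)=E_i\cdot u_i$ for a non-trivial closed path $u_i\subset\mathbb G_i$; and $u_i$ forms a cycle whose closing-up turn $(u_i,\overline{u_i})$ is legal.

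First I would dispose of the two ``structural'' assertions, which belong to the IRTT/CT normalization rather than to the completely-split refinement. That each NEG stratum is a single edge is arranged in the construction of IRTT and CT representatives — this is possible precisely because NEG strata grow at most linearly — and, in the UPG case we care about, it is recorded in Theorem~\ref{thm:RTT}. The equation $f(E_i)=E_i\cdot u_i$ with $u_i$ a closed edge-path (supported in $\mathbb G_{i-1}\subseteq\mathbb G_i$) is then exactly the ``suffix'' clause of Theorem~\ref{thm:RTT}; moreover $u_i$ is immersed because a topological representative sends edges to \emph{reduced} edge-paths, so in particular the turn at the terminal vertex $v$ of $E_i$ between $\overline{E_i}$ and the initial edge of $u_i$ is non-degenerate.

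The real content is the third assertion, that $u_i$ closes up to a \emph{legal} cycle at $v$, and here I would argue directly from the CT axioms. The ``completely split'' axiom forces $f_\#(E_i)=E_i\cdot u_i$ to be a completely split edge-path; combined with the fact that $E_i$ is a genuinely growing, non-folding edge — equivalently, that the iterates $[f^k(E_i)]=E_i\cdot u_i\cdot[f(u_i)]\cdots[f^{k-1}(u_i)]$ are reduced for every $k$ — this shows that no cancellation occurs at $v$ between consecutive blocks $[f^{j}(u_i)]$ and $[f^{j+1}(u_i)]$. Since $f$ fixes $v$ and, $f$ being rotationless, periodic directions at $v$ are fixed, one can track under powers of $Df$ the two directions entering this non-cancellation condition and conclude that the closing-up turn of $u_i$ is not merely non-degenerate but stays non-degenerate under every power of $Df$ — which is precisely what it means for $(u_i,\overline{u_i})$ to be a legal turn; this simultaneously records $u_i$ as a cycle.

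The step I expect to be the main obstacle is exactly this last one: the bookkeeping with directions at the single vertex $v$ under $Df$, needed to rule out that $u_i$ — though immersed as a path — nonetheless ``backtracks on closing up'' in a manner exposed only after iterating $f$. This is where the ``completely split'' and ``rotationless'' axioms of a CT map, rather than the weaker relative-train-track axioms, are genuinely used, and it explains why the lemma belongs to the CT machinery of \cite{FeighHandel:recognition} rather than being immediate from Theorem~\ref{thm:RTT}.
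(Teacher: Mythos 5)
The paper does not prove this lemma: it is stated as a direct citation of \cite[Lemma~4.21]{FeighHandel:recognition} with no proof supplied, and you correctly identify it as such. Your sketch of how the result falls out of the CT axioms is therefore extra content not present in the paper, offered ``for the reader's convenience''; it is a reasonable gloss, and you are right that the first two assertions are essentially the IRTT/suffix normalization already recorded in Theorem~\ref{thm:RTT}.

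One small caution on the legality sketch, which you yourself flag as the delicate step: the fact that $[f^k(E_i)]=E_i\cdot u_i\cdot[f(u_i)]\cdots[f^{k-1}(u_i)]$ is reduced gives non-degeneracy of the turns between the terminal direction of $u_i$ (call it $\overline{d_2}$) and the directions $Df^j(d_1)$, where $d_1$ is the initial direction of $u_i$ — not directly of the turn $(\overline{d_2},d_1)$ itself. To convert this into legality of $(u_i,\overline{u_i})$ one needs in addition that $d_1$ is a fixed direction (which, for a rotationless CT, is what periodicity of directions at the fixed vertex $v$ buys you), and then that legality of the closing-up turn propagates under $Df$. So the argument is not ``iterate-reducedness implies legality'' in one step; there is a genuine use of the rotationless/fixed-direction machinery in between, consistent with your closing remark that this is where the CT axioms, and not merely the IRTT axioms, are doing the work. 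Since the paper simply cites the lemma, none of this affects the paper; it only bears on the completeness of your optional sketch.
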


\subsection{Coarse geometry and coarse median spaces}

\subsubsection{Acylindrical action}
An acylindrical action on a space is one where the number of elements that simultaneously coarsely stabilize sufficiently distant points is uniformly bounded.

\begin{defn}[\cite{BOWtight}]\label{defn:acyl} The action of a group $G$ on a hyperbolic graph $X$ is \emph{acylindrical} if for all $r\geq 0$ there
exist $R, N >0$ such that for all vertices $a,b\in X$ with $d(a, b) \geq R $, there are at
most $N$ distinct $g\in G$  such that $d(a, ga)\leq  r$  and $d(b, gb) \leq r$.

An action is $k$-acylindrical if, for every pair of vertices that are at distance at
least $k+1$ the stabilizer of the pair is finite.
\end{defn}

We will use acylindricity in two contexts.  First, when $G$ is a relatively hyperbolic group and $\widehat G$ is the coned-off Cayley graph, the left-multiplication action of $G$ on $\widehat G$ is acylindrical.  Second, when $X$ is a tree, the two notions of acylindricity in Definition \ref{defn:acyl} are equivalent, and we will work with the latter one.

\subsubsection{Coarse median spaces}
Defined in \cite{Bowditch:coarse-median}, coarse medians are motivated by a \emph{centroid construction} capturing certain aspects of the large-scale “cubical”
structure of various naturally occurring spaces. 
A coarse median space
is a geodesic metric space equipped with a ternary “coarse median” operation,
defined up to bounded distance, and satisfying some natural axioms. Roughly
speaking, these require that any finite subset of the space can be embedded in
a finite CAT(0) cube complex in such a way that the coarse median operation
agrees, up to bounded distance, with the natural combinatorial median in such
a complex.

\begin{defn}[Median algebra]\label{defn:median-algebra}
Let $M$ be a set and $\mu:M^3\rightarrow M$ a ternary operation.  $(M, \mu)$ is a median algebra, if, for any $x,y,z,u, t \in M$,
\begin{itemize}
\item $\mu(x,y,z)=\mu(y,x,z)=\mu(y,z,x)$
\item $ \mu(x,x,y)=x$
\item $\mu(x,y, \mu(z,u,t))=\mu(\mu(x,y,z), \mu(x,y,u)), t)$\qedhere
\end{itemize}
\end{defn}
Intuitively, we
think of $\mu$ mapping points $x,y,z\in M$ in $M$ to a point “between $a,b$'' for each pair of distinct $a,b\in\{x,y,z\}$. For $x,y \in M$ the \emph{interval between $x$ and $y$} is, $[x,y]_{\mu}=\{s\in M: \mu(x,y,s)=s\}$ and we write $[x,y]$ if the choice of $\mu $ is clear. 

\begin{defn} A  median metric space is any metric space $(X, d)$
such that for all $x,y,z\in X$, there is a unique $m$ such that $d(x,y)=d(x,m)+d(m,y)$ and the analogous condition is also satisfied for $d(x,z)$ and $d(y,z)$.  Setting $\mu(x,y,z)=m$ makes $(X,\mu)$ a median algebra.
\end{defn}

\begin{defn} Let $(X,d_X)$ $(Y, d_Y)$ be metric spaces and $\mu_X, \mu_Y$ be ternary operators on $X,Y$ respectively. Then, a map $\rho: X\rightarrow Y$ is called $k$--quasi-median if, for all $x,y,z$ in $X$, 
$d_Y(\rho(\mu_X(x,y,z)), \mu_Y(\rho(x), \rho(y), \rho(z)))\leq k.$
\end{defn}

\begin{defn}  A coarse median space $(X,d)$ is a metric space with a \emph{coarse median}, which is a ternary operator $\mu: X\rightarrow X$ such that there is a sequence $\{s_n\}$ satisfying the following:
\begin{itemize}
\item $\mu $ is $s_0$--coarsely Lipschitz in each factor,
\item for each finite $A\subset X$ there is a finite median algebra $M$ and maps $\iota: M\rightarrow X$, $j: A\rightarrow M$ that are $s_{|A|}$--quasi-median, and $d_X(ij(a), a) \leq s_{|A|}$ for all $a\in A$. \qedhere
\end{itemize}
\end{defn}

The existence of a coarse median on a geodesic space is a quasi-isometry invariant.  Gromov hyperbolic spaces, mapping class groups and Teichm\"uller
spaces of compact surfaces, right-angled Artin groups and geometrically finite
Kleinian groups in any dimension are coarse median. Hierarchically hyperbolic spaces are also coarse median (\cite{HHSI,HHSII}). The notion is useful for establishing coarse rank bounds and quasi-isometric rigidity.

\begin{defn} A finitely generated group is \emph{coarse median} if some (hence any)
Cayley graph with respect to a finite generating set admits a coarse median.  A \emph{invariant coarse median} on a finitely generated group $G$ is a ternary operator $\mu:G^3\to G$ that is $G$--equivariant for the left-multiplication action, and also a coarse median.
\end{defn}

Not every coarse median group admits an equivariant coarse median, but every hierarchically hyperbolic group does.  In Theorem \ref{thm:main}.\eqref{item:coarse-median}, we mean the weaker (non-invariant) notion.  Hence the theorem says that (for free by cyclic groups), either there is an invariant coarse median, or there is no coarse median at all.

\begin{remark}\label{rem:quasicubical-coarse-median}
If a space $X$ is quasi-isometric to a CAT(0) cube complex, then, immediately from the definition, it is a coarse median space.    
\end{remark}

\subsection{Hierarchical hyperbolicity}
Heuristically, a hierarchically hyperbolic space structure is a collection of hyperbolic spaces forming a ``coordinate system'' for a larger space.
The notion originates in \cite{HHSI} and was given a simpler equivalent formulation in \cite{HHSII}. 
\begin{defn}[Hierarchically hyperbolic space structure]\label{defn:HHS}
Let $\hhsconst \geq 0$, and let $(\mathcal{X},d_\mathcal{X})$ be a $(\hhsconst,\hhsconst)$--quasi-geodesic space. A \emph{hierarchically hyperbolic space structure (HHS structure)} on $\mathcal{X}$ consists of
\begin{itemize}
    \item an index set $\mathfrak{S}$ for a collection of \emph{domains} $W$;
    \item a set of hyperbolic spaces $\left\{ (\calC(W), d_W) :  W \in \mathfrak{S} \right\}$;
    \item a set of projection maps $\left\{ \pi_W : \mathcal{X} \rightarrow 2^{\calC(W)} \right\}$; and
    \item nesting, orthogonality, and transversality relations so that each pair of domains is related by exactly one of these relations, 
\end{itemize}
satisfying the following axioms. Let $x,y \in \mathcal{X}$ and $U,V,W \in \mathfrak{S}$.
\begin{itemize}
     \item[(P)] Projection.
         \begin{itemize}
         \item[P1] \emph{Coarsely Lipschitz:} Each projection $\pi_W$ is $(\hhsconst,\hhsconst)$--coarsely Lipschitz.
         \item[P2] \emph{Coarsely well defined:} The image under the projection $\pi_W(x)$ is nonempty and has diameter bounded by $\hhsconst$. 
         \item[P3] \emph{Coarsely surjective:} The image of $\pi_W(\mathcal{X})$ is $\hhsconst$--dense in $\calC(W)$.
         \item[P4] \emph{Uniqueness:} For each $r \geq 0$ there is a $\theta$ (depending only on $r$) such that if $d_\mathcal{X} (x,y) \geq \theta$, then there is some $W \in \mathfrak{S}$ for which $d_W (\pi_W(x), \pi_W(y) ) \geq r$.
         \end{itemize}

     \item[(N)] Nesting. The index set $\mathfrak{S}$ is partially ordered by the nesting relation $\nest$. If $V \nest W$, say that $V$ is \emph{nested} in $W$. Nesting is reflexive, i.e., $V \nest V$. Denote $\mathfrak{S}_W^\nest := \left\{ V \in \mathfrak{S} : V \nest W \right\}$.
         \begin{itemize}
         \item[N1] \emph{Maximum:} The index set $\mathfrak{S}$ is either $\emptyset$ or contains a unique $\nest$--maximal element.
         \item[N2] \emph{Finite height:} Any set of pairwise--$\nest$--comparable elements has size at most $\hhsconst$.
         \item[N3] \emph{Relative projection:} For all $V,W \in \mathfrak{S}$ with $V \sqsubsetneq W$ there is a specified nonempty subset $\rho_W^V \subseteq \calC(W)$ with diameter at most $\hhsconst$ and a map $\rho_V^W: \calC(W) \rightarrow 2^{\calC(V)}$.
         \item[N4] \emph{Bounded geodesic image:} If $V \sqsubsetneq W$, then for all geodesics $\gamma \subset \calC(W)$ either $\rho_V^W(\gamma)$ has diameter at most $\hhsconst$ or $\gamma$ intersects the $\hhsconst$-neighborhood of $\rho_W^V$.
         \item[N5] \emph{Large links:} Let $N$ be the largest integer no greater than $E d_W(\pi_W(x),\pi_W(y))+E$. There exists $\left\{ V_1, \dots, V_N \right\}$ with $V_i \sqsubsetneq W$ such that, for all $U \sqsubsetneq W$, either $U \nest V_i$ for some $i$ or $d_U(\pi_U(x),\pi_U(y)) \leq E$.
          \end{itemize}

     \item[(O)] Orthogonality. The orthogonality relation $\orth$ is symmetric and antireflexive. Denote $\mathfrak{S}_W^\orth := \left\{ V \in \mathfrak{S} : V \orth W \right\} $.
        \begin{itemize}
        \item[O1] \emph{$\nest$--compatibility:} If $V \nest W$ and $U \orth W$, then $V \orth U$.
        \item[O2] \emph{Containers:} For $U \nest W$ with $\mathfrak{S}_W^\nest \cap \mathfrak{S}_U^\orth \neq \emptyset$, there exists $Q \in \mathfrak{S}_W^\nest$, called the \emph{container} of $U$ in $W$, such that $V \nest Q$ whenever $V \in \mathfrak{S}_W^\nest \cap \mathfrak{S}_U^\orth$. 
        \end{itemize}
        
    \item[(T)] Transversality. The transversality relation $\transverse$ is symmetric and antireflexive.
        \begin{itemize}
        \item[T1] \emph{Relative projections:} If $V \transverse W$, then there are specified nonempty sets $\rho_W^V \subseteq \calC(W)$ and $\rho_V^W \subseteq \calC(V)$ each of diameter at most $\hhsconst$.
        \end{itemize}

    \item[(C)] Consistency.  
        \begin{itemize}
        \item[C1] If $V \transverse W$, then $\mathrm{min}\left\{ d_W(\pi_W(x),\rho_W^V), d_V(\pi_V(x), \rho_V^W) \right\}\leq E$.
        \item[C2] If $V \sqsubsetneq W$, then $\mathrm{min}\left\{ d_W(\pi_W(x),\rho_V^W), \mathrm{diam}(d_V(\pi_V(x) \cup \rho_W^V(\pi_V(x)) \right\} \leq E$.
        \item[C3] If $U \nest V$ and either $V \sqsubsetneq W$ or $V \transverse W$ and $W \not \orth U$, then $d_W(\rho_W^V,\rho_W^U)\leq E$. 
        \end{itemize}

    \item[(R)] Partial realization. For any set of pairwise orthogonal domains $\left\{ V_j \right\}$ and any points $p_j \in \calC(V_j)$, there exists $x \in \mathcal{X}$ such that $d_{V_j} (x,p_j) \leq E$ for all $j$; and for each $j$ and each $W \in \mathfrak{S}$ with $V_j \sqsubsetneq W$ or $V_j \transverse W$, $d_W(x, \rho_W^{V_j}) \leq E$.\qedhere
\end{itemize}
\end{defn}

\begin{remark}
In \cite[Defn. 1.1]{HHSII}, it is only hypothesised that each $\pi_U$ has uniformly quasiconvex image, for convenience in describing HHS structures on certain subspaces.  However, in \cite[Sec. 1]{HHSII}, it is explained that one can always modify the $\mathcal CU$ to make the $\pi_U$ uniformly coarsely surjective, so Definition \ref{defn:HHS} identifies the exact same class of spaces as \cite[Defn. 1.1]{HHSII}, and we choose work with the more convenient coarsely surjective $\pi_U$ because in this paper there is no reason not to.
\end{remark}

The notion of a hierarchically hyperbolic group also appears in \cite{HHSI,HHSII}. The following formulation comes from \cite{PetytSpriano}; it is equivalent to the original by \cite{DHScorr}.

\begin{defn}[Hierarchically hyperbolic group]\label{defn:HHG}
A finitely generated group $G$ is called a \emph{hierarchically hyperbolic group (HHG)} if there exists a hierarchically hyperbolic structure $(G,\mathfrak{S})$ satisfying the following assumptions.
\begin{itemize}
    \item $G$ acts cofinitely on $\mathfrak{S}$, and this action preserves the three relations.
    \item For each $g \in G$ and each $U \in \mathfrak{S}$ there is an isometry $g: \mathcal{C}(U) \rightarrow \mathcal{C}(gU)$, and for all $h \in G$ these isometries satisfy $g \cdot h = gh$.
    \item For all $x, g \in G$ and $U \in \mathfrak{S}$ we have $g \pi_U(x) = \pi_{gU}(gx)$. If in addition $V \in \mathfrak{S}$ and $U \transverse V$ or $V \sqsubsetneq U$, then $g \rho_U^V = \rho_{gU}^{gV}$.
\end{itemize}
The notation $(G, \mathfrak{S})$ is meant to convey that $G$ plays the role of the quasigeodesic metric space $\mathcal{X}$ in the definition of a hierarchically hyperbolic space structure (Definition \ref{defn:HHS}): We take $G$ to be the discrete metric space equipped with the word metric.\qedhere
\end{defn}

\begin{defn}\label{defn:colourable}
$(G,\mathfrak S)$ is \emph{colorable} if $gU\transverse U$ or $gU=U$ for all $g\in G$ and $U\in\mathfrak S$.    
\end{defn}

Since $G$--translates of $U\in\mathfrak S$ cannot be properly $\nest$--related ($\nest$ is a partial order with bounded chains), colorability means exactly that $\mathfrak S$ can be partitioned into finitely many $G$--invariant colors such that elements of the same color are not orthogonal.

\begin{defn}[Hierarchical quasiconvexity, Definition 5.1, \cite{HHSII}]
    Let $(\mathcal X, \mathfrak S)$ be a hierarchically hyperbolic space. Then for some $k:[0,\infty)\rightarrow [0,\infty)$ $\mathcal Y\subset \mathcal X$ is $k$--\emph{hierarchically quasiconvex} (HQC) if:
    \begin{enumerate}
    \item For all $W\in \mathfrak S$, $\pi_W(\mathcal Y)$ is $k(0)$–-quasiconvex in the $\delta$--hyperbolic space $\mathcal C W$,
    \item For every $\kappa>0 $ and every point $x\in \mathcal X$ such that 
    $d_{W}(\pi_W(x), \pi_W(\mathcal Y))\leq \kappa $
    for all $W \in \mathfrak S$, we have $d(x, \mathcal Y)\leq k(\kappa)$.\qedhere
    \end{enumerate}
\end{defn}

\begin{remark}[HHS coarse medians and HQC sets]\label{rem:coarse-median}
If $(X,\mathfrak S)$ is an HHS, then \cite[Thm. 7.3]{HHSII} provides a coarse median $\mu:X^3\to X$.  Specifically, given $x,y,z\in X$, there is a coarsely unique point $\mu=\mu(x,y,z)$ with the property that $\pi_W(\mu)$ is $C$--close to the coarse centre of any geodesic triangle in the $E$--hyperbolic space $\mathcal CW$ with vertices $\pi_W(x),\pi_W(y),\pi_W(z)$, where $C=C(E)$.  This yields an equivalent formulation of hierarchical quasiconvexity that resembles the notion of convexity in a median space: $Y\subset X$ is HQC if there exists $Q$ such that $d(\mu(x,y,y'),Y)\leq Q$ for all $x\in X$ and $y,y'\in Y$; see \cite[Sec. 5]{RST}.

HHSs in fact enjoy a stronger form of the coarse median property given by the \emph{cubulation of hulls} from \cite[Thm. 2.1]{HHS:quasiflats}, along with various even stronger forms from \cite{DMS,DMS2,Durham:cubes} in the colorable case.  A more global form of the coarse median property, and the one used in this paper, is \cite[Thm. B]{Pet}, which says that any colorable HHG admits a quasi-median quasi-isometry to a finite-dimensional CAT(0) cube complex.
\end{remark}

\section{Unbranched blocks}\label{sec:unbranched-blocks}
\begin{defn}[Block, unbranched]\label{defn:block-unbranched}
Let $G$ be a group.  A subgroup $B\leq G$ is a \emph{block} if both of the following hold:
\begin{itemize}
    \item $B$ has a finite-index subgroup isomorphic to $\F\times\integers$, where $\F$ is a nonabelian free group.

    \item If $B'\leq G$ has a finite-index subgroup $\F'\times \integers$, where $\F'$ is nonabelian free, and $[B:B'\cap B]<\infty$, then $[B':B'\cap B]<\infty$. 
\end{itemize}

The group $G$ is \emph{unbranched}, or \emph{has unbranched blocks}, if for all blocks $B_1,B_2,B_3$ such that $[B_i:B_i\cap B_j]=\infty$ whenever $i\neq j$, the subgroup $B_1\cap B_2\cap B_3$ is virtually cyclic (possibly finite).
\end{defn}

  Observe:

\begin{lem}\label{lem:unbranched-index}
Let $G'\leq G$. If $[G:G']<\infty$, then $G'$ is unbranched if and only if $G$ is.
\end{lem}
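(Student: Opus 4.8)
The plan is to compare the blocks of $G$ with those of $G'$, show they coincide up to the obvious restriction, and then transport the defining condition back and forth. Concretely, for $G'\le G$ of finite index I want the dictionary: the blocks of $G'$ are exactly the blocks of $G$ contained in $G'$, and $B\mapsto B\cap G'$ carries blocks of $G$ to blocks of $G'$. Granting this, both implications of the ``iff'' drop out by moving a triple of pairwise non-commensurable blocks between $G$ and $G'$ and using that ``virtually cyclic'' is preserved under passing to finite-index sub- and supergroups.

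First I would isolate a standard structural fact: \emph{a finite-index subgroup $P$ of $\F\times\integers$, with $\F$ nonabelian free, is isomorphic to $\F'\times\integers$ for some nonabelian free $\F'$}. Indeed, projection to the $\F$ factor restricts on $P$ to a homomorphism whose image is a finite-index (hence nonabelian free) subgroup $\F'\le\F$ and whose kernel is a finite-index (hence infinite cyclic) subgroup of $\integers$; since $\F'$ is free this extension splits, and since the kernel is central in $P$ the splitting is direct, so $P\cong\F'\times(P\cap\integers)$. A formal consequence — intersect a finite-index subgroup of a given group with a finite-index $\F\times\integers$ subgroup, when present, and note the intersection has finite index in both — is that the class of subgroups appearing in the first bullet of Definition~\ref{defn:block-unbranched} (call them \emph{VFZ subgroups}) is closed under passing to finite-index subgroups; it is trivially unaffected by enlarging the ambient group from $G'$ to $G$.

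Next I would prove the dictionary. (A) If $B$ is a block of $G$ then $B\cap G'$ is a block of $G'$: it is a VFZ subgroup (finite index in $B$), and if $C\le G'$ is a VFZ subgroup with $[B\cap G':(B\cap G')\cap C]<\infty$, then since $(B\cap G')\cap C=B\cap C$ and $[B:B\cap G']<\infty$ we get $[B:B\cap C]<\infty$, so the block property of $B$ in $G$ gives $[C:B\cap C]<\infty$, which is what is needed. (B) If $\tilde B$ is a block of $G'$ then $\tilde B$ is a block of $G$: it is a VFZ subgroup of $G$, and if $A\le G$ is a VFZ subgroup with $[\tilde B:\tilde B\cap A]<\infty$, then $A\cap G'$ is a VFZ subgroup of $G'$ (finite index in $A$) and $\tilde B\cap(A\cap G')=\tilde B\cap A$, so the block property of $\tilde B$ in $G'$ gives $[A\cap G':\tilde B\cap A]<\infty$, and combining with $[A:A\cap G']<\infty$ yields $[A:\tilde B\cap A]<\infty$. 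I expect step (B) to be the one whose validity is least obvious a priori — one might worry that $G$ contains VFZ subgroups not lying in $G'$ that commensurably dominate $\tilde B$ — but the fix is simply to intersect the competitor $A$ back into $G'$ and use maximality there, which is legitimate precisely because VFZ-ness survives intersecting with a finite-index subgroup.

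Finally I would close the argument. If $G$ is unbranched and $\tilde B_1,\tilde B_2,\tilde B_3$ are blocks of $G'$ with $[\tilde B_i:\tilde B_i\cap\tilde B_j]=\infty$ for $i\ne j$, then by (B) they are blocks of $G$ satisfying the same index conditions (these indices are intrinsic to the abstract subgroups involved), so $\tilde B_1\cap\tilde B_2\cap\tilde B_3$ is virtually cyclic by hypothesis — exactly the conclusion required for $G'$. Conversely, if $G'$ is unbranched and $B_1,B_2,B_3$ are blocks of $G$ with $[B_i:B_i\cap B_j]=\infty$, put $\tilde B_i=B_i\cap G'$; by (A) these are blocks of $G'$, and if some $[\tilde B_i:\tilde B_i\cap\tilde B_j]$ were finite then, since $[B_i:\tilde B_i]<\infty$ and $\tilde B_i\cap\tilde B_j\le B_i\cap B_j\le B_i$, we would get $[B_i:B_i\cap B_j]<\infty$, a contradiction; hence $\tilde B_1\cap\tilde B_2\cap\tilde B_3=B_1\cap B_2\cap B_3\cap G'$ is virtually cyclic, and as it has finite index in $B_1\cap B_2\cap B_3$ the latter is virtually cyclic as well. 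Beyond keeping this index bookkeeping straight, there is no serious obstacle.
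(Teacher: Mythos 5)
Your proof is correct and follows the same strategy as the paper's: establish a dictionary between blocks of $G$ and blocks of $G'$ (via $B\mapsto B\cap G'$ in one direction and inclusion in the other), then transfer the triple-intersection condition using commensurability. The paper's own proof is terser and asserts the ``if and only if'' between $B$ being a block of $G$ and $B\cap G'$ being a block of $G'$ after explicitly verifying only one implication; your step (B), showing a block of $G'$ is already a block of $G$ by intersecting any competitor $A\le G$ back into $G'$, supplies precisely the detail the paper leaves implicit.
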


\begin{proof}
Suppose $G'\leq G$ has finite index and let $B_1,B_2,B_3$ be blocks in $G$.  For $i\in\{1,2,3\}$, let $B_i'=G'\cap B_i$. Observe that $B_i'$ is a block in $G$.  Indeed, suppose that $B_i''\leq G'$ is virtually the product of $\integers$ with a nonabelian free group, and $B_i'$ is virtually contained in $B_i''$.  Then, viewing $B_i''$ as a subgroup of $G$, the fact that $B_i'$ has finite index in $B_i$, and $B_i$ is a block in $G$, together imply (using Definition \ref{defn:block-unbranched}) that $[B_i'':B_i'\cap B_i'']<\infty$.

This shows that $B_i$ is a block in $G$ if and only if $B_i'=B_i\cap G'$ is a block in $G'$.  Moreover, each of the intersections $B_i\cap B_j$ is commensurable with $B_i'\cap B_j'$, and $B_1\cap B_2\cap B_3$ is commensurable with $B_1'\cap B_2'\cap B_3'$.  Hence $G$ is unbranched if and only if $G'$ is.
\end{proof}

\begin{prop}\label{prop:unbranched-cyclic-splitting}
Let $G$ be a group splitting as a graph of groups such that each edge group is virtually abelian and the $G$--action on the Bass-Serre tree is acylindrical.  Then every block in $G$ fixes a unique vertex in $T$.  If every edge group is virtually cyclic, then $G$ is unbranched if and only if every vertex group is unbranched.
\end{prop}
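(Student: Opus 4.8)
The plan is to reduce the statement to two elementary observations together with an ellipticity argument powered by acylindricity. \emph{Observation 1}: every edge stabilizer of the $G$--action on $T$ --- and, having index at most $2$ over it, every setwise edge stabilizer --- is virtually abelian; hence no non-virtually-abelian subgroup $P\le G$ is contained in an edge stabilizer, so if $P$ fixes a point of $T$ then it fixes a vertex, and it fixes at most one vertex (fixing two distinct vertices forces fixing the geodesic between them, hence an edge). \emph{Observation 2}: a subgroup that virtually contains $\F\times\integers$ with $\F$ nonabelian free --- in particular a block --- is not virtually abelian, since a finite-index abelian subgroup would meet the $\F$--factor in a finite-index, hence (by Nielsen--Schreier and an index count) nonabelian, subgroup of $\F$; nonabelian free groups are likewise not virtually abelian.

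For the first assertion I would establish the slightly stronger claim that any $H\le G$ with $H\cong\F\times\integers$, and \emph{a fortiori} any finite extension of such an $H$ --- in particular any block --- fixes a unique vertex of $T$. Write $H=\F\times\langle z\rangle$ with $z$ a generator of the central $\integers$. The step I expect to be the main obstacle is to show, using acylindricity (Definition \ref{defn:acyl}), that $H$ acts elliptically. First, $z$ is elliptic: otherwise $\F$, centralizing $z$, preserves the axis $A$ of $z$, and the index-$\le 2$ subgroup $\F_0\le\F$ acting on $A$ by orientation-preserving isometries maps to $(\reals,+)$ with kernel containing $[\F_0,\F_0]$, which is infinite since $\F_0$ is nonabelian free; this infinite kernel fixes $A$ pointwise, hence fixes two vertices of $A$ at arbitrarily large distance, contradicting acylindricity with $r=0$. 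Next, every $f\in\F$ is elliptic: otherwise $z$, centralizing $f$, preserves the axis $A_f$, and since $z$ is elliptic it acts on the line $A_f$ either trivially or as a point reflection, so the infinite group $\langle z^2\rangle$ fixes $A_f$ pointwise, again contradicting acylindricity. Now every element of the finitely generated group $\F$ is elliptic, so by Serre's lemma (a finitely generated group acting on a tree with every element elliptic has a fixed point) $\F$ fixes a nonempty subtree $Y=\Fix_T(\F)$, which by Observation 1 contains a vertex fixed by $\F$. Since $z$ centralizes $\F$ it preserves $Y$, and being elliptic on $T$ it is elliptic on the invariant subtree $Y$ (the nearest-point projection $T\to Y$ is $z$--equivariant), so $z$ fixes a point of $Y$, which by Observation 1 is a vertex $y$; then all of $H$ fixes $y$. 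Finally, if $B\ge H$ is a block with $[B:H]<\infty$, then for each $b\in B$ the conjugate $bHb^{-1}$ is another copy of $\F\times\integers$ in $B$, fixing the vertex $b\cdot y$; and $H\cap bHb^{-1}$ has finite index in $H$, hence virtually contains $\F\times\integers$ and fixes a unique vertex by Observation 1, forcing $b\cdot y=y$. Thus $B$ fixes $y$, uniquely by Observation 1.

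Now assume all edge groups are virtually cyclic. The bridge to the vertex groups is the lemma: \emph{for $B\le G_v$, $B$ is a block in $G_v$ if and only if $B$ is a block in $G$}. That a block in $G$ contained in $G_v$ is a block in $G_v$ is immediate, since the defining implication of a block in $G$ quantifies over all subgroups of $G$, hence over those inside $G_v$, and the ``virtually $\F\times\integers$'' clause is intrinsic. Conversely, suppose $B$ is a block in $G_v$ (so $B$ virtually contains $\F\times\integers$ and, by the first assertion, fixes the unique vertex $v$), and let $B'\le G$ virtually contain $\F'\times\integers$ with $[B:B\cap B']<\infty$. By the first assertion $B'$ fixes a unique vertex $u$; a finite-index subgroup of $B$ lies in $B'\le G_u$ and also fixes $v$, and being virtually a product of a nonabelian free group with $\integers$ it fixes a unique vertex by Observation 1, so $u=v$. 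Hence $B'\le G_v$, and the block property of $B$ inside $G_v$ gives $[B':B\cap B']<\infty$, as required.

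Granting the lemma, the equivalence is bookkeeping. If every $G_v$ is unbranched and $B_1,B_2,B_3$ are blocks in $G$ with $[B_i:B_i\cap B_j]=\infty$ for $i\ne j$, let $v_i$ be the unique vertex fixed by $B_i$: if $v_i\ne v_j$ for some pair then $B_i\cap B_j$ fixes both $v_i$ and $v_j$, hence an edge, hence lies in a virtually cyclic edge group, so $B_1\cap B_2\cap B_3\le B_i\cap B_j$ is virtually cyclic; and if $v_1=v_2=v_3=v$ then $B_1,B_2,B_3\le G_v$ are blocks in $G_v$ by the lemma, so unbranchedness of $G_v$ gives that $B_1\cap B_2\cap B_3$ is virtually cyclic. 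Conversely, if some $G_v$ is not unbranched, a triple of blocks in $G_v$ witnessing this is, by the lemma, a triple of blocks in $G$ with the same pairwise-infinite-index condition and the same non-virtually-cyclic triple intersection, so $G$ is not unbranched. The only genuinely delicate point is the ellipticity argument in the second paragraph --- above all, forcing \emph{every} element of the free factor to be elliptic; everything else is routine manipulation of the definitions.
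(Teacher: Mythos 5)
Your proof is correct, and it follows the same overall strategy as the paper's: use acylindricity to force a block to be elliptic, use the virtually-abelian edge-group hypothesis to rule out fixed edges (hence get a unique fixed vertex), and then reduce unbranchedness of $G$ to unbranchedness of the vertex groups by matching blocks up with vertex stabilizers. Where you go beyond what the paper writes out is worth noting. First, the paper dispatches the ellipticity step with the single sentence ``Since $G$ acts acylindrically on $T$, Definition \ref{defn:block-unbranched} implies that $B$ must fix a point''; your argument (show $z$ is elliptic because otherwise $[\F_0,\F_0]$ fixes the axis of $z$, then show every $f\in\F$ is elliptic because otherwise $\langle z^2\rangle$ fixes the axis of $f$, then invoke Serre's lemma) is exactly the kind of argument being elided, and it is correct. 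Second, and more substantively: the paper's written proof only establishes the implication ``$G$ not unbranched $\implies$ some $G_v$ not unbranched,'' whereas the proposition asserts an equivalence. The missing implication ``some $G_v$ not unbranched $\implies$ $G$ not unbranched'' requires showing that a block in a vertex group $G_v$ is already a block in $G$, which is not a formal restatement (blockness in $G$ quantifies over more candidate subgroups $B'$). Your lemma that blockness of $B\le G_v$ is the same in $G_v$ and $G$ --- using the unique-fixed-vertex statement to force any competitor $B'$ into $G_v$ --- is exactly the argument needed and is correct. One small remark: when you intersect a finite-index subgroup with $\F\times\integers$, it is cleanest to intersect separately with the central $\integers$ and with $\F\times\{1\}$ and take the product $F_0\times k\integers$, which is what you implicitly do; this is fine. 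In short: your proof is correct and is in fact more complete than the one in the paper, which omits both the ellipticity argument and the forward direction of the equivalence.
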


\begin{proof}
Let $T$ be the Bass-Serre tree and let $B$ be a block in $G$.  Since $G$ acts acylindrically on $T$, Definition \ref{defn:block-unbranched} implies that $B$ must fix a point.  The assumption on edge-stabilizers means $B$ cannot fix an edge, so there is a unique vertex $v(B)\in T$ with $B\leq \stabilizer_G(v(B))$.

Now suppose edge stabilizers are virtually cyclic.  If $B_1,B_2$ are blocks and $B_1\cap B_2$ is non-(virtually-cyclic), then $v(B_1)=v(B_2)$, since $B_1\cap B_2$ fixes the geodesic from $v(B_1)$ to $v(B_2)$.  In particular, if $B_1,B_2,B_3$ are blocks witnessing that $G$ is not unbranched, then $v(B_1)=v(B_2)=v(B_3)$, so the corresponding vertex group $G_v$ contains $B_1,B_2,$ and $B_3$, and these subgroups are blocks in $G_v$ whose mutual intersection is not virtually cyclic.
\end{proof}

\begin{prop}\label{prop:relhyp-unbranched}
Let $G$ be a finitely generated group that is hyperbolic relative to a finite collection $\mathcal P$ of subgroups.  Then $G$ is unbranched if and only if each $P\in\mathcal P$ is unbranched.  If each $P\in\mathcal P$ is coarse median, then so is $G$.
\end{prop}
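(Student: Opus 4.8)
The plan is to establish a dictionary: up to conjugacy, the blocks of $G$ are exactly the blocks of the peripheral subgroups $P\in\mathcal P$, and this dictionary preserves all the commensurability data appearing in Definition~\ref{defn:block-unbranched}; the unbranched condition then transfers in both directions, while the coarse median statement is a separate combination result.

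\textbf{Step 1: every block is parabolic.} First I would show that any $B\le G$ which is virtually $\F'\times\integers$ with $\F'$ a nonabelian free group---in particular, any block---is conjugate into a peripheral subgroup, and into a \emph{unique} peripheral conjugate. Such a $B$ contains a copy of $\integers^2$ inside its finite-index subgroup $\F'\times\integers$. If some $b\in B$ were loxodromic for the acylindrical action of $G$ on the coned-off Cayley graph $\widehat G$, then a power $b^k$ would lie in $\F'\times\integers$, and the centralizer of $b^k$ in $\F'\times\integers$ contains a copy of $\integers^2$; this contradicts the fact that the centralizer of a loxodromic element is virtually cyclic under an acylindrical action on a hyperbolic graph. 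Hence $B$ contains no loxodromic element, so, being infinite, it is parabolic---that is, conjugate into some $P\in\mathcal P$---by the standard classification of subgroups of relatively hyperbolic groups (every subgroup is finite, parabolic, or contains a loxodromic element). Uniqueness of the peripheral conjugate follows from almost malnormality of $\mathcal P$ together with $|B|=\infty$, since two peripheral conjugates with infinite intersection coincide. I expect this to be the step requiring the most care, although it is essentially an assembly of standard facts; the remaining steps are bookkeeping.

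\textbf{Step 2: the block dictionary and incommensurability.} Next I would check that, for $B\le P\in\mathcal P$, $B$ is a block in $G$ if and only if $B$ is a block in $P$. The ``only if'' direction is immediate from Definition~\ref{defn:block-unbranched}, since a virtually-$(\F''\times\integers)$ subgroup of $P$ is also one in $G$. For ``if'', suppose $B$ is a block in $P$ and $B'\le G$ is virtually $\F''\times\integers$ with $[B:B'\cap B]<\infty$; then $B'\cap B$ is infinite, and applying Step~1 to $B'$ places $B'$ inside a unique peripheral conjugate $Q$. Since $B'\cap B\le Q\cap P$ is infinite, $Q=P$, so $B'\le P$, and blockness of $B$ in $P$ gives $[B':B'\cap B]<\infty$. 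The same malnormality also shows that two blocks lying in distinct peripheral conjugates have finite intersection.

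\textbf{Step 3: transferring the hypothesis, and coarse medians.} For $(\Leftarrow)$, I would assume each $P$ is unbranched and take blocks $B_1,B_2,B_3$ in $G$ with $[B_i:B_i\cap B_j]=\infty$ for $i\neq j$. By Step~1 each $B_i$ lies in a unique peripheral conjugate. If two of these conjugates differ, then $B_1\cap B_2\cap B_3$ is finite, hence virtually cyclic. If all three coincide, I conjugate the $B_i$ into a common $P\in\mathcal P$; by Step~2 they are then blocks in $P$ satisfying the same incommensurability condition, so $P$ being unbranched forces $B_1\cap B_2\cap B_3$ to be virtually cyclic. Either way $G$ is unbranched. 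For $(\Rightarrow)$, given blocks $B_1,B_2,B_3$ in a peripheral $P$ with $[B_i:B_i\cap B_j]=\infty$, Step~2 makes them blocks in $G$ with the same data, so $G$ being unbranched forces $B_1\cap B_2\cap B_3$ to be virtually cyclic; hence $P$ is unbranched. Finally, if each $P\in\mathcal P$ is coarse median, then $G$ is coarse median by a combination theorem for coarse median structures (relatively hyperbolic groups with coarse median peripheral subgroups are coarse median); see \cite{Bowditch:coarse-median}. This last ingredient does not interact with the unbranched-blocks analysis.
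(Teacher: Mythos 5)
Your proof is correct and follows essentially the same route as the paper: use acylindricity of the action on the coned-off Cayley graph (via the centralizer-of-a-loxodromic argument) to show every block is parabolic and lies in a unique peripheral conjugate, use almost malnormality to set up the block dictionary between $G$ and the $P\in\mathcal P$, and invoke Bowditch's combination theorem for the coarse median statement. One small note: the relevant Bowditch result is the one on invariance of coarse median structures under relative hyperbolicity (cited in the paper as \cite[Thm.~1.1]{Bowditch:invariance}), not the original coarse median paper \cite{Bowditch:coarse-median}.
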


\begin{proof}
Let $\widehat G$ be a coned-off Cayley graph witnessing that $(G,\mathcal P)$ is a relatively hyperbolic pair.  Since $G$ acts on $\widehat G$ acylindrically \cite[Prop. 5.2]{Osin:acyl}, any block $B\leq G$ has bounded orbits, and it follows from relative hyperbolicity that it is conjugate into a unique $P\in\mathcal P$.  Since $\mathcal P$ is almost malnormal, distinct blocks have infinite intersection only if they belong to a common peripheral subgroup, so $G$ is unbranched exactly when each $P\in\mathcal P$ is.  If each $P\in\mathcal P$ is coarse median, then so is $G$, by \cite[Thm. 1.1]{Bowditch:invariance}.
\end{proof}

\begin{remark}\label{rem:blocks-in-graph-of-groups}
Proposition \ref{prop:unbranched-cyclic-splitting} says that each block lies in a unique vertex stabilizer.  The proof of Proposition \ref{prop:relhyp-unbranched} shows that, in a relatively hyperbolic group, any block is conjugate into a unique peripheral subgroup.
\end{remark}

\section{Decomposing mapping tori of free by cyclic groups}\label{sec:free-by-Z}
Let $\F,\phi,$ and $\maptor$ be as in Theorem \ref{thm:main}. The \emph{growth rate} of $\phi$ is 
\emph{polynomial of degree $d$} if, for all $g\in \F$, there exist 
$d'\in\{0,\ldots,d\}$ and $A,B>0$ such that $An^{d'}\leq \frac{\ell_S(\phi^n(g))}{\ell_S(g)}\leq Bn^{d'}$,
and $d$ is as small as possible.  If $d=1$, then $\phi$ has \emph{linear growth}, and if $d>1$, then $\phi$ has 
\emph{superlinear growth}.  If $\phi$ does not have polynomial growth, then $\phi$ is exponentially growing.  Every polynomially growing automorphism has a nonzero power that is UPG \cite[Cor. 5.7.6]{BFH-00}.  Fixing a representative $\Phi\in\Aut(\F)$ of $\phi$, the group $\maptor\cong\langle \F,t\mid \{tft^{-1}=\Phi(f):f\in \F\}\rangle$.   

We will use several structural results about mapping tori. The first is from \cite[Thm. 3.5]{DahmaniLi} or \cite[Thm. 3.15]{Ghosh}:

\begin{prop}[Relative hyperbolicity]\label{prop:relhyper}
There exists a finite collection $\{F_i\}_{i=1}^k$ of finite-rank subgroups of $\F$, a collection of elements $\{g_i\}_{i=1}^k\subset \F$, and a collection of positive integers $\{m_i\}_{i=1}^k$ such that the following holds.  Let $t_i=(tg_i)^{m_i}$ and let $\maptor_i=\langle F_i,t_i\rangle$.  Then: 
\begin{itemize}
    \item $\maptor$ is hyperbolic relative to the set $\{\maptor_i\}_{i=1}^k$ of peripheral subgroups.
    \item For each $i$, the subgroup $F_i$ is normal in $\maptor_i$.
    \item Letting $\Phi_i\in\Aut(F_i)$ be the automorphism induced by conjugation by $t_i$, the maps $F_i\hookrightarrow \maptor_i$ and $t_i\mapsto t_i$ induce an isomorphism $F_i\rtimes_{\phi_i}\langle t_i\rangle\to \maptor_i$.
\end{itemize}
Moreover, each $\phi_i$ represents a polynomial-growth element of $\Out(F_i)$.
\end{prop}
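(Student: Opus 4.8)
The plan is to follow the strategy of Ghosh \cite{Ghosh} and Dahmani--Li \cite{DahmaniLi}, from which this is quoted verbatim: extract the peripheral subgroups from a relative train track representative of a power of $\phi$, realize $\maptor$ as the fundamental group of a mapping-torus complex, verify a flaring condition coming from the exponentially growing strata, and invoke a relative combination theorem. I will only sketch this, since the proposition is an input rather than something we reprove.

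\emph{Peripheral data.} Passing to a power is harmless: $\F\rtimes_{\phi^N}\integers$ has finite index in $\maptor$, and relative hyperbolicity passes between a group and a finite--index subgroup (with the peripheral collection replaced by the induced one), which only changes the integers $m_i$. So take $N$ so that $\phi^N$ has a relative train track representative $f\colon\mathbb G\to\mathbb G$ \cite{BFH-00} whose invariant free factor data is fixed rather than merely permuted. Let $\mathcal F=\{[F_1],\dots,[F_k]\}$ be the $\nest$--maximal $\phi$--invariant free factor system on which $\phi$ restricts to a polynomially growing (indeed UPG) automorphism; that such a maximal system exists, and is visible in the train track filtration, is part of what must be checked. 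Fix representatives $F_i\leq\F$. Since $[F_i]$ is $\phi$--invariant there is $g_i\in\F$ with $(tg_i)F_i(tg_i)^{-1}=F_i$, and an integer $m_i>0$ (chosen to stabilize the components of $\mathcal F$ setwise and to make the restriction UPG) so that $t_i=(tg_i)^{m_i}$ induces $\phi_i\in\Aut(F_i)$ representing a polynomially growing outer automorphism of $F_i$. Then $F_i\triangleleft\maptor_i=\langle F_i,t_i\rangle$ and $F_i\rtimes_{\phi_i}\langle t_i\rangle\cong\maptor_i$, which gives the last three bullets and the final sentence directly from the construction.

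\emph{Relative hyperbolicity.} Form the mapping torus $M_f=\mathbb G\times[0,1]/(x,1)\sim(f(x),0)$, whose fundamental group is the relevant power of $\maptor$ and in which the subcomplex lying over the maximal polynomially growing subgraph carries the $\maptor_i$. The key input is the flaring behaviour of exponentially growing strata built into relative train tracks \cite{BFH-00}: once one works relative to the polynomially growing directions, legal paths through an EG stratum grow at a uniform exponential rate under forward and backward iteration of $f_\#$, and this is exactly the hallways/annulus--flaring hypothesis of a relative Bestvina--Feighn-type combination theorem (in the style of Mj--Reeves or Gautero, as used in \cite{Ghosh,DahmaniLi}). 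Feeding this in yields that $\maptor$ is hyperbolic relative to $\{\maptor_i\}$; the almost--malnormality of this collection implicit in the conclusion is underwritten by malnormality of the free factors $F_i$ with respect to the attracting laminations of the EG strata.

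\emph{Main obstacle.} The heart of the argument is the flaring estimate --- proving that, after collapsing the polynomially growing part, lengths of quasigeodesics in the exponential directions grow uniformly exponentially along the $t$--coordinate of $M_f$. This is the free-group analogue of the curve-complex flaring used for surface mapping tori, and it is where relative train track theory does the real work; getting it with uniform constants over the whole mapping torus, rather than one EG stratum at a time, is the delicate point. A secondary, purely bookkeeping difficulty is arranging $\mathcal F$, the representatives $F_i$, and the pairs $(g_i,m_i)$ so that $\mathcal F$ is componentwise invariant and each $\phi_i$ is genuinely UPG on $F_i$: routine but fiddly, and needed to match the stated conclusion exactly.
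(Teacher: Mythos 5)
The paper offers no proof of this proposition; it is stated as a direct citation to \cite[Thm.\ 3.5]{DahmaniLi} and \cite[Thm.\ 3.15]{Ghosh}, and you correctly flag this at the outset, so there is no internal argument to compare against. Your sketch of the cited strategy (extract the maximal $\phi$--invariant polynomially growing free factor system from a relative train track representative, build the mapping torus complex, verify flaring of the EG strata, and invoke a relative combination theorem in the style of Mj--Reeves or Gautero) is a plausible account of what those references actually do, and your reading of the bookkeeping role of the exponents $m_i$ and the elements $g_i$ is consistent with the statement; the aside about transferring relative hyperbolicity up from $\F\rtimes_{\phi^N}\integers$ is unnecessary because the $m_i$ already absorb the power, but it is not wrong.
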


\begin{lem}\label{lem:separability}
Let $\{\maptor_i\}_{i=1}^k$ be the peripheral structure for $\maptor$ from Proposition \ref{prop:relhyper} and let $\maptor_i'\leq \maptor_i$ be a finite-index subgroup, for each $i$.  Then there exists a finite-index normal subgroup $\maptor'\leq \maptor$ such that $\maptor'\cap \maptor_i\leq \maptor_i'$ for all $i$.
\end{lem}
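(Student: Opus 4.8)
The plan is to construct, for each $i$ separately, a finite-index normal subgroup $\maptor^{(i)}\trianglelefteq\maptor$ with $\maptor^{(i)}\cap\maptor_i\leq\maptor_i'$; then $\maptor':=\bigcap_{i=1}^{k}\maptor^{(i)}$ is finite-index and normal in $\maptor$ and satisfies $\maptor'\cap\maptor_j\leq\maptor_j'$ for every $j$. Replacing each $\maptor_i'$ by its normal core in $\maptor_i$ (still finite-index, and smaller), we may assume $\maptor_i'\trianglelefteq\maptor_i$. Fix $i$. Since $\F\trianglelefteq\maptor$, the element $t_i$ normalizes $\F$, so $\maptor_i^{\flat}:=\langle\F,t_i\rangle=\F\rtimes_{\Theta}\langle t_i\rangle$, where $\Theta\in\Aut(\F)$ is conjugation by $t_i$; moreover $[\maptor:\maptor_i^{\flat}]=m_i$, since the image of $t_i=(tg_i)^{m_i}$ generates $m_i\integers$ in $\maptor/\F\cong\integers$. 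The key gain is that $\maptor_i=F_i\rtimes\langle t_i\rangle$ now sits inside $\maptor_i^{\flat}$ with the \emph{same} stable letter $t_i$, and with $F_i$ a finite-rank $\Theta$-invariant subgroup of $\F$ ($\Theta$-invariant because $F_i\trianglelefteq\maptor_i$, with $\Theta|_{F_i}$ the automorphism defining $\maptor_i$). It therefore suffices to produce a finite-index normal subgroup of $\maptor_i^{\flat}$ whose intersection with $\maptor_i$ lies in $\maptor_i'$, and take its normal core in $\maptor$ as $\maptor^{(i)}$.

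Set $N_i:=\maptor_i'\cap F_i$, a finite-index, hence finite-rank, $\Theta$-invariant subgroup of $F_i$, and let $c_i\geq1$ be determined by $\maptor_i'\cap\langle t_i\rangle=\langle t_i^{c_i}\rangle$ (finite since $[\maptor_i:\maptor_i']<\infty$). As $N_i$ is separable in the free group $\F$ and $[F_i:N_i]<\infty$, there is a finite-index $\F^{\dagger}\leq\F$ with $\F^{\dagger}\cap F_i\leq N_i$. Intersecting the subgroups $g\,\Theta^{n}(\F^{\dagger})\,g^{-1}$ over $g\in\F$ and $n\in\integers$ --- there are only finitely many of them, since a finitely generated free group has finitely many subgroups of each index --- yields a finite-index $N\trianglelefteq\F$ that is normal in $\F$, $\Theta$-invariant, and still satisfies $N\cap F_i\leq N_i$. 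Let $\bar\Theta\in\Aut(\F/N)$ be the induced automorphism, $e$ its order, and form the finite group $\widehat Q:=(\F/N)\rtimes_{\bar\Theta}(\integers/c_ie\integers)$. I would then use the homomorphism $\Lambda\colon\maptor_i^{\flat}=\F\rtimes_{\Theta}\langle t_i\rangle\to\widehat Q$ defined by $f\mapsto(fN,0)$ for $f\in\F$ and $t_i\mapsto(N,1)$, which is well defined because $(N,1)(fN,0)(N,1)^{-1}=(\bar\Theta(fN),0)=(\Theta(f)N,0)$.

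It remains to check that $\ker(\Lambda|_{\maptor_i})\leq\maptor_i'$. Every element of $\maptor_i$ is uniquely $ft_i^{n}$ with $f\in F_i$ and $n\in\integers$, and $\Lambda(ft_i^{n})=(fN,\;n\bmod c_ie)$; this is trivial exactly when $f\in N\cap F_i$ and $c_ie\mid n$. In that case $f\in N\cap F_i\leq N_i\leq\maptor_i'$, while $c_i\mid n$ forces $t_i^{n}\in\langle t_i^{c_i}\rangle=\maptor_i'\cap\langle t_i\rangle\leq\maptor_i'$, so $ft_i^{n}\in\maptor_i'$. Hence $\maptor^{(i)}:=$ the normal core in $\maptor$ of $\ker\Lambda$ is a finite-index normal subgroup of $\maptor$ with $\maptor^{(i)}\cap\maptor_i\leq\maptor_i'$, and $\maptor':=\bigcap_i\maptor^{(i)}$ completes the argument.

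I expect the last step to be the main obstacle, because of a twisting phenomenon: a finite-index $\maptor_i'\leq\maptor_i=F_i\rtimes\langle t_i\rangle$ is \emph{not} recovered from $\maptor_i'\cap F_i$ together with the image of $\maptor_i'$ in $\langle t_i\rangle$ --- think of a ``twisted-diagonal'' subgroup --- so one cannot simply reassemble $\maptor_i'$ out of separable pieces of $F_i$ and of $\integers$. This is why one extends the quotient homomorphism $\maptor_i\to\maptor_i/\maptor_i'$ itself rather than the subgroup (using separability of $N_i$ in $\F$, and freeness of $\F$ to make the extension $\Theta$-equivariant), and why the cyclic factor $\integers/c_ie\integers$ must be taken deep enough --- a multiple of $c_i$ --- to absorb the $\langle t_i\rangle$-direction of $\maptor_i'$.
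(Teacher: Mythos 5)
Your proof is correct, but it takes a genuinely different route from the paper. The paper's proof is a one-liner: since each $\maptor_i$ is free by cyclic, so is $\maptor_i'$, and Proposition 4.2.1 of \cite{Kudlinska:thesis} then gives separability of $\maptor_i'$ in $\maptor$ directly; the required $\maptor(i)$ is extracted by the usual coset-representative trick, and $\maptor'=\bigcap_i\maptor(i)$ does the job. You instead give a self-contained construction that bypasses Kudlinska's result and uses only M.~Hall's separability theorem for finitely generated subgroups of free groups. The structure of your argument is sound: passing to the normal core of $\maptor_i'$ in $\maptor_i$; working inside $\maptor_i^{\flat}=\langle\F,t_i\rangle$, which has index $m_i$ since $t_i=(tg_i)^{m_i}$ and $g_i\in\F$; observing that $N_i=\maptor_i'\cap F_i$ is finite-index in $F_i$ and $\Theta$-invariant (by normality of both $\maptor_i'$ and $F_i$ in $\maptor_i$); building a characteristic-up-to-$\Theta$ and normal finite-index $N\trianglelefteq\F$ with $N\cap F_i\leq N_i$ by intersecting the finitely many subgroups $g\Theta^n(\F^\dagger)g^{-1}$ of a fixed index; and then mapping $\maptor_i^\flat\to(\F/N)\rtimes_{\bar\Theta}(\integers/c_ie)$, whose kernel meets $\maptor_i$ inside $(N\cap F_i)\rtimes\langle t_i^{c_ie}\rangle\leq N_i\rtimes\langle t_i^{c_i}\rangle\leq\maptor_i'$. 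Taking the normal core of this kernel in $\maptor$ and intersecting over $i$ finishes the proof. Each step checks out, including the verification that $\Lambda$ is a well-defined homomorphism (the commutation relation goes through on the quotient because $N$ is $\Theta$-invariant and $\bar\Theta$ has order dividing $c_ie$).

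The advantage of your argument is that it is elementary and self-contained, using only M.~Hall's theorem plus finite semidirect products, whereas the paper leans on a nontrivial external separability result for free-by-cyclic subgroups of $\maptor$. The paper's approach is shorter and more robust (it would work for any separable subgroup, not only for the particular product structure exploited here); yours makes the finite quotient explicit, which is useful if one wants effective control over the index. One remark on your final paragraph: you describe the resolution of the ``twisting'' pitfall as extending the quotient homomorphism $\maptor_i\to\maptor_i/\maptor_i'$, but your actual construction does something weaker (and sufficient): it produces a finite quotient of $\maptor_i^\flat$ whose kernel meets $\maptor_i$ inside $\maptor_i'$, without recovering $\maptor_i/\maptor_i'$ itself. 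This weaker goal is exactly what the lemma requires, and it is what makes the hands-on approach feasible.
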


\begin{proof}
Fix $i\leq k$.  By Proposition \ref{prop:relhyper}, $\maptor_i$ is free by cyclic, so $\maptor_i'$ is also, and hence Proposition 4.2.1 of \cite{Kudlinska:thesis} implies that $\maptor_i'$ is separable in $\maptor$, which provides $\maptor(i)\leq_{fi}\maptor$ such that $\maptor(i)\cap \maptor_i\leq \maptor_i'$, so $\maptor'=\bigcap_{i=1}^k\maptor(i)$ is the required subgroup.
\end{proof}

Next, we recall the splittings used to study mapping tori of polynomial-growth automorphisms.  The first statement is for  superlinear growth, and comes from the topmost edges decomposition originating in work of Macura \cite{Macura:detour}; see also \cite{Hagen:thickness} and \cite[Prop. 2.5]{AndrewHughesKudlinska}.  Applying this splitting theorem recursively yields:

\begin{prop}[Cyclic hierarchy]\label{prop:cyclic-hierarchy}
Suppose that $\phi$ has polynomial growth rate $d\geq 2$.  Then $\maptor$ has a finite-index subgroup of the form $\maptor'=\F\rtimes_{\phi^k}\integers,\ k>0$, admitting a hierarchy with the following properties:
\begin{itemize}
    \item each graph of groups decomposition in the hierarchy has finite underlying graph;
    \item each graph of groups in the hierarchy has all edge groups isomorphic to $\integers$;
    \item the terminal vertex groups in the hierarchy are mapping tori of free group automorphisms of polynomial growth rate at most $1$, and each subgroup of $\maptor$ that is the mapping torus of a linear-growth automorphism is contained in one of the vertex groups.
    \item Each splitting in the hierarchy is obtained by cutting all of the topmost edges in the domain of an improved relative train track representative for the monodromy of the mapping torus being split (see e.g. \cite[Lem. 5.2]{KudlinskaValiunas}).
\end{itemize}
Moreover, each graph of groups decomposition in the hierarchy is $2$--acylindrical.
\end{prop}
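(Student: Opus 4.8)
The plan is to bootstrap from the single-step ``topmost edges'' splitting of Macura \cite{Macura:detour} (see also \cite{Hagen:thickness} and \cite[Prop. 2.5]{AndrewHughesKudlinska}) by an induction on polynomial growth degree. First I would reduce to the UPG case: since $\phi$ has polynomial growth of degree $d\geq 2$, \cite[Cor. 5.7.6]{BFH-00} provides $k>0$ with $\phi^k$ UPG, so $\maptor':=\F\rtimes_{\phi^k}\integers$ is an index-$k$ subgroup of $\maptor$ of the form required in the statement, still of polynomial growth degree $d$. Fix an improved relative train track representative $f\colon\mathbb G\to\mathbb G$ of $\phi^k$ as in Theorem \ref{thm:RTT}. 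Applied to $\maptor'$, the single-step theorem yields a graph of groups decomposition of $\maptor'$ with finite underlying graph and all edge groups infinite cyclic (generated by $\maptor'$--conjugates of the twisting element $t$), whose vertex groups are mapping tori $F'\rtimes_{\phi'}\integers$ of UPG automorphisms $\phi'$ of proper free factors $F'\leq\F$ of polynomial growth degree strictly less than $d$, and whose action on the Bass--Serre tree is $2$--acylindrical. Here we use that $\pi_1$ of a subgraph of $\mathbb G$ is a free factor of $\F$, and that the restriction of a UPG automorphism to an invariant free factor is again UPG, so no further powers are needed as one recurses.

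Next I would iterate: to every vertex group that is a mapping torus of an automorphism of degree $\geq 2$, re-apply the single-step theorem to obtain the next level of the hierarchy. Since polynomial growth degree is a nonnegative integer that strictly drops at each level, the hierarchy has depth $<d$ and hence is finite; and at each level the asserted properties --- finite underlying graph, infinite cyclic edge groups, $2$--acylindricity --- hold directly from the single-step theorem. The recursion halts exactly when every remaining vertex group is a mapping torus of an automorphism of polynomial growth degree at most $1$; these are the terminal vertex groups.

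There remains the claim that any $H\leq\maptor$ which is a mapping torus of a linear-growth automorphism lies in a terminal vertex group. Replacing $H$ by $H\cap\maptor'$ (finite index in $H$, still a mapping torus of a linear-growth automorphism), it suffices to show that $H\leq\maptor'$ is elliptic in every splitting of the hierarchy: ellipticity at the top level places $H$ inside a vertex group, and ellipticity at the successive levels eventually places it inside a terminal one. For this, note that $H$ has polynomial growth degree $1$ --- growth cannot increase in such a subgroup --- so $H$ contains no free-by-cyclic subgroup of degree $\geq 2$, while it does contain a copy of $\integers^2$ (a $\psi$--periodic conjugacy class together with the twisting element of $H$, where $\psi$ is the monodromy of $H$). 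Since $\integers^2$ admits no $2$--acylindrical action on a simplicial tree with infinite cyclic edge stabilizers lacking a global fixed point, these copies of $\integers^2$ must be elliptic; combined with the fact that the strata split off at each level realize growth degree $\geq 2$ (so the lower-degree strata supporting $H$ remain in the ``bottom'' vertex group), this forces all of $H$ to be elliptic.

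I expect the last step to be the main obstacle: verifying that the top-degree-strata splittings are genuinely relative to the linear part, i.e. that a linear-growth subtorus $H$ --- which need not be one of the hierarchy's vertex groups and need not be aligned with the filtration of $\mathbb G$ --- cannot be carried across a top edge. A secondary point needing care is establishing $2$--acylindricity uniformly if the cited single-step source does not state it; one would argue it by bounding intersections of $\maptor'$--conjugates of $\langle t\rangle$, using that such infinite cyclic subgroups are their own centralizers modulo the abelian fixed part, so that a sufficiently long fixed segment in the Bass--Serre tree drives an element into such a centralizer and then, via the near-malnormality of the associated subgroups inherited from the relative train track structure, to the identity.
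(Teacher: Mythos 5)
Your overall plan---reduce to the UPG case by passing to a power $\phi^k$ via \cite[Cor. 5.7.6]{BFH-00}, apply the single-step topmost-edges splitting, note that vertex groups are mapping tori of restrictions to invariant proper free factors of strictly smaller polynomial growth degree (hence again UPG, so no further power needs to be taken), and iterate---is exactly how the paper reads this proposition: it is stated as a recollection, with the single-step splitting attributed to \cite{Macura:detour,Hagen:thickness} and \cite[Prop. 2.5]{AndrewHughesKudlinska}, the $2$--acylindricity to \cite[Lem. 5.2]{KudlinskaValiunas}, and the recursion treated as routine. Your observation that UPG is inherited by restrictions to invariant free factors, so the depth of the hierarchy is bounded by $d$, is the right reason the iteration terminates. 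To that extent your proof matches the paper's.

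The place where your proposal departs from citation, and where there is a real gap, is the third bullet. You argue that every $\integers^2$ inside a linear-growth sub-mapping torus $H=K\rtimes\langle s\rangle$ is elliptic for a $2$--acylindrical action on a tree with cyclic edge stabilizers, and then conclude $H$ itself is elliptic ``combined with the fact that the strata split off at each level realize growth degree $\geq 2$.'' The first step is fine, but it does not force ellipticity of $H$, and the parenthetical appeal to the filtration does not apply to an arbitrary sub-mapping torus $H\leq\maptor$, which need not be aligned with any stratum of $\mathbb G$. The danger is genuine: linear-growth mapping tori \emph{do} split over $\integers$ --- Remark \ref{rem:3-manifold-comparison} and Example \ref{exmp:cat(0)-non-hhg} in the paper give explicit such splittings --- so you cannot conclude ellipticity of $H$ just from the presence of $\integers^2$ subgroups. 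What is missing is a use of $2$--acylindricity tailored to $H$: for instance, first show the free part $K$ (which lies in $\F$) is elliptic; since $K$ is nonabelian and edge stabilizers are $\integers$, the fixed-point set of $K$ is then a single vertex by acylindricity, and $s$ normalizes $K$ so it must fix that vertex as well, forcing $H$ elliptic. Showing $K$ elliptic in the first place is the nontrivial content, and is essentially what the cited sources establish (the tree arises from collapsing the lower-degree part of the filtration, and linear-growth conjugacy classes in $\F$ are carried by that part). You flag the last step as the main obstacle yourself; the comments above are meant to make concrete why the $\integers^2$ route alone does not close it. Your proposed from-scratch argument for $2$--acylindricity is also only a heuristic sketch in the spirit of \cite[Lem. 5.2]{KudlinskaValiunas}; since the paper simply cites that lemma, you are welcome to do the same.
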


A graph of groups decomposition of a group $G$ is \emph{$k$--acylindrical} if $G$ acts $k$--acylindrically on the Bass-Serre tree.  The acylindricity statement the proposition is \cite[Lem. 5.2]{KudlinskaValiunas}.  

In the linear case, we use a splitting over $\integers^2$ subgroups constructed in \cite{AndrewMartino:splitting,DahmaniTouikan} whose properties are summarized in the following proposition, which is almost exactly the same as \cite[Prop. 4.11]{HagenWise:new-polynomial} but which we restate here for the reader's convenience:

\begin{prop}[Linear mapping torus decomposition]\label{prop:split-over-tori}
Let $\phi$ be a linearly growing \emph{UPG} outer automorphism of the finite rank free group $\F$ and let $\maptor=\F\rtimes_\phi\langle t\rangle$ be its mapping torus. Then there is a finite connected graph $\Delta$ such that $\F$ and $\maptor$ split as graphs of groups with underlying graph $\Delta$, and the following all hold:
\begin{enumerate}
\item For each $v\in\vertices(\Delta)$, the vertex group $\F_v$ has finite rank, and $\maptor_v=\F_v\times \langle t_v\rangle$, where $t_v\in \F t$.  If $\rank(\F_v)>1$, then $\F_v\times\langle t_v\rangle$ is a block of $\maptor$.  If $\rank(\F_v)=1$, then $\F_v\times\langle t_v\rangle$ is a maximal $\integers^2$ subgroup of $\maptor$ .\label{item:linear-vertices}

\item For $e\in\edges(\Delta)$,  $\F_e=\langle r_e\rangle$ is a maximal cyclic subgroup of $\F$ and $\maptor_e=\F_e \times\langle t_e\rangle$.  Let $r_e$ also denote the image of $r_e$ in $\F_{e^-}$ and let $p_e$ denote its image in $\F_{e^+}$.  The image of $t_e$ in $\maptor_{e^-}$ is $t_{e^-}$ and the image of $t_e$ in $\maptor_{e^+}$ is $p_e^{k(e)}t_{e^+}$, where $k(e)\in\integers$.\label{item:linear-edges}

\item If $e\in\edges(\Delta)$ and $v\in\vertices(\Delta)$ is incident to $e$, then $Z(\maptor_v)\leq\image(\maptor_e\to \maptor_v)$.\label{item:linear-centre}  

\item The outer automorphism $\phi$ has a representative preserving the graph of groups decomposition $\Delta$ of $\F$, acting as a multitwist automorphism as in \cite[Sec. 6]{CohenLustig}.\label{item:linear-dehn}

 \item $\Delta$ is bipartite, and $v\in\vertices(\Delta)$ is \textbf{black} if $\F_v$ has rank $1$ and \textbf{white} otherwise, all edges are oriented from their black to their white vertex, and edge monomorphisms to black vertices are surjective; if $v$ is black, then $\F_v=\langle r_v\rangle$ for some infinite-order $r_v$, and $r_e$ is sent by the edge map to $r_v$ for each $e$ with $e^-=v$.\label{item:linear-bipartite}

\item The action of $\maptor$ on the Bass-Serre tree $T$ is $4$--acylindrical and minimal.  \label{item:linear-acylindrical}

\item Let $b\in\vertices(\Delta)$ be black and let $v,v'\in\vertices(\Delta)$ be white vertices joined to $b$ by distinct edges $e,e'$.  Then $\langle t_{b}r_e^{-k(e)}\rangle \times\langle t_{b}r_{e'}^{-k(e')}\rangle$ has finite index in $\maptor_b$, so at least one of $k(e),k(e')$ is nonzero.\label{item:linear-finite-index}

\item If $\tilde v\in\vertices(T)$ is white and $\tilde b,\tilde b'\in\vertices(T)$ are distinct black vertices adjacent to $\tilde v$, then $\stabilizer_\maptor (\tilde b)\cap\stabilizer_{\maptor }(\tilde b')$ is cyclic.\label{item:linear-cylic-intersection}

\item If $\rank(\F)\geq 2$, then there is at least one white vertex, and the blocks in $\maptor$  are precisely the stabilizers of the white vertices, i.e. the centralizers in $\maptor$  of the various $t_v$.\label{item:linear-blocks}
\end{enumerate}
\end{prop}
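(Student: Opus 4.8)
The plan is to deduce the proposition from the existing structure theory of mapping tori of linearly growing \emph{UPG} automorphisms. Items \eqref{item:linear-vertices}--\eqref{item:linear-acylindrical} are, up to notation and bookkeeping, exactly \cite[Prop. 4.11]{HagenWise:new-polynomial}, which assembles the cyclic splitting results of \cite{AndrewMartino:splitting} and \cite{DahmaniTouikan} together with the multitwist description of linearly growing automorphisms going back to \cite[Sec. 6]{CohenLustig} (see also \cite{BFH:kolchin}). So the real work is to recall that construction in the form we need and then to verify items \eqref{item:linear-finite-index}--\eqref{item:linear-blocks}, which are the statements tailored to the present paper.

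\emph{Building the decomposition.} A linearly growing UPG automorphism is a multitwist: there is a minimal graph-of-groups decomposition $\Lambda$ of $\F$ with infinite-cyclic edge groups, invariant up to conjugacy under a representative $\Phi$ of $\phi$, on which $\Phi$ acts as a product of Dehn twists with integer twist parameters; this is item \eqref{item:linear-dehn}. Passing to the canonical (tree-of-cylinders) form of $\Lambda$ and subdividing each edge by a new vertex carrying the corresponding cyclic group, one may take $\Lambda$ bipartite, with \textbf{black} vertices of cyclic vertex group (the subdivision vertices together with any pre-existing rank-one vertices) and \textbf{white} vertices of vertex group of rank $\geq 2$, all edges running black-to-white and restricting to isomorphisms onto the black vertex groups; this gives item \eqref{item:linear-bipartite}. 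For later use we record two properties of this normal form: distinct edges into a white vertex carry non-commensurable maximal cyclic subgroups of that vertex group, and $\Lambda$ is reduced and minimal with no valence-one vertices. Now suspend: since $\Phi$ preserves $\Lambda$ and acts on each vertex group by an inner automorphism (a multitwist is inner on each vertex group) and fixes each edge group (a Dehn twist fixes its core), the mapping torus $G$ inherits a graph-of-groups decomposition with underlying graph $\Delta=\Lambda$, with $G_v=F_v\times\langle t_v\rangle$ for a suitable $t_v\in\F t$ and $G_e=F_e\times\langle t_e\rangle\cong\integers^2$; this is item \eqref{item:linear-vertices}, the block-versus-$\integers^2$ dichotomy being read off from $\rank(F_v)$. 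Comparing the two edge embeddings of $G_e$, with the twist parameter $k(e)$ entering exactly as in \cite[Sec. 6]{CohenLustig}, yields item \eqref{item:linear-edges}. Item \eqref{item:linear-centre} is then a one-line computation: $Z(G_v)$ equals $\langle t_v\rangle$ for white $v$ and all of $G_v$ for black $v$, and in both cases this sits inside the image of any incident edge group, using that edge maps into black vertices are onto. Finally, minimality of the $G$--action on the Bass--Serre tree $T$ is inherited from minimality of $\Lambda$, and $4$--acylindricity is proved as in \cite[Prop. 4.11]{HagenWise:new-polynomial}: along a segment of length $\geq 5$ the $\integers^2$ edge stabilizers are forced to meet trivially, because consecutive edge stabilizers at a white vertex $v$ intersect only in the $\langle t_v\rangle$ direction. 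This is item \eqref{item:linear-acylindrical}.

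\emph{The statements specific to this paper.} For item \eqref{item:linear-cylic-intersection}, if $\tilde v$ is a white vertex of $T$ and $\tilde b\neq\tilde b'$ are adjacent black vertices, then $\stabilizer_G(\tilde b)\cap\stabilizer_G(\tilde b')$ fixes the tripod $\tilde b-\tilde v-\tilde b'$, hence equals $\stabilizer_G(\tilde e)\cap\stabilizer_G(\tilde e')$ for the two edges $\tilde e,\tilde e'$ of that tripod; inside $G_{\tilde v}=F_{\tilde v}\times\langle t_{\tilde v}\rangle$ these $\integers^2$ subgroups are $\langle p_e\rangle\times\langle t_{\tilde v}\rangle$ and $\langle p_{e'}\rangle\times\langle t_{\tilde v}\rangle$, and since $p_e$ and $p_{e'}$ are non-commensurable in the free group $F_{\tilde v}$ one gets $\langle p_e\rangle\cap\langle p_{e'}\rangle=1$, so the intersection is $\langle t_{\tilde v}\rangle$, which is cyclic. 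Item \eqref{item:linear-finite-index} is the companion computation inside the $\integers^2$ group $G_b$: writing $G_b$ additively with basis $r_b,t_b$, the elements $t_b r_e^{-k(e)}$ and $t_b r_{e'}^{-k(e')}$ become $(-k(e),1)$ and $(-k(e'),1)$, which generate a finite-index subgroup precisely when $k(e)\neq k(e')$; and $k(e)\neq k(e')$ for distinct edges at a black vertex holds because $\Lambda$ is the reduced canonical decomposition (were they equal, the Dehn twists along $e$ and $e'$ would cancel, contradicting reducedness), and in particular at least one of $k(e),k(e')$ is nonzero. For item \eqref{item:linear-blocks}: when $\rank(\F)\geq 2$ there must be a white vertex, since a connected graph of groups all of whose vertex and edge groups are infinite cyclic never has fundamental group free of rank $\geq 2$; now Proposition \ref{prop:unbranched-cyclic-splitting} applies (the edge groups $\integers^2$ are virtually abelian and the action is $4$--acylindrical), so every block of $G$ fixes a unique vertex of $T$; that vertex cannot be an edge and cannot be black (a black vertex group $\cong\integers^2$ contains no finite-index subgroup of the form $\F'\times\integers$ with $\F'$ nonabelian free), so it is white, giving $B\leq\stabilizer_G(\tilde v)$ for a unique white $\tilde v$; conversely $\stabilizer_G(\tilde v)=F_{\tilde v}\times\langle t_{\tilde v}\rangle$ is virtually $\F\times\integers$ and, by the fixed-point-set argument together with $\rank(F_{\tilde v})\geq 2$, cannot be properly enlarged within a virtually-$(\F'\times\integers)$ subgroup of $G$, so it is itself a block, which is the centralizer $C_G(t_{\tilde v})$; combining these identifications gives item \eqref{item:linear-blocks}.

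The main obstacle is organisational rather than conceptual. One must match our normalisation of the bipartite decomposition and of the stable letters $t_v,t_e$ to those of \cite{AndrewMartino:splitting,DahmaniTouikan,HagenWise:new-polynomial}, so that the twist parameters $k(e)$ appear in item \eqref{item:linear-edges} with the asserted signs and placement and so that item \eqref{item:linear-blocks} matches the abstract notion of block in Definition \ref{defn:block-unbranched}; and one must be careful that the properties ``distinct edges at a white vertex carry non-commensurable cyclic subgroups,'' ``$\Lambda$ has no valence-one vertices,'' and ``$\Lambda$ is reduced'' are genuinely part of the canonical form being invoked. Beyond this, no geometric input is needed that is not already contained in those references and in Proposition \ref{prop:unbranched-cyclic-splitting}.
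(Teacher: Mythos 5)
Your proposal is correct in substance and follows the same route as the paper, whose proof is essentially a two-sentence citation: items \eqref{item:linear-vertices}--\eqref{item:linear-cylic-intersection} are already in \cite[Prop.~4.11]{HagenWise:new-polynomial}, and item \eqref{item:linear-blocks} is the only addition, deduced from item \eqref{item:linear-bipartite} and Remark~\ref{rem:blocks-in-graph-of-groups} (which in turn invokes Proposition~\ref{prop:unbranched-cyclic-splitting}, exactly as you do). You have misread the scope of the citation slightly: you treat items \eqref{item:linear-finite-index} and \eqref{item:linear-cylic-intersection} as needing fresh proof, when they are also part of the cited proposition in \cite{HagenWise:new-polynomial}; that said, your direct arguments for them are essentially right. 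Two small points where you wave your hands: in item \eqref{item:linear-finite-index}, the inequality $k(e)\neq k(e')$ is not really a matter of ``twists cancelling along $e$ and $e'$'' (they are distinct edges, so nothing literally cancels); what is true is that $k(e)-k(e')$ is the well-defined twist parameter at $b$ (only the difference is independent of the choice of $t_b$), and the fact that distinct linear edges sharing an axis have distinct exponents is exactly the CT ``Linear Edges'' property, which is how the cited sources prove this. And your assertion that a connected graph of groups with all vertex and edge groups infinite cyclic cannot have nonabelian free fundamental group is correct but deserves a reason; the cleanest one is the Euler characteristic identity $\chi(\pi_1\mathcal{G})=\sum_v\chi(G_v)-\sum_e\chi(G_e)=0$ when all $G_v,G_e\cong\integers$, whereas $\chi(\F)=1-\rank(\F)<0$. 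Your item \eqref{item:linear-blocks} argument otherwise matches the paper's, using Proposition~\ref{prop:unbranched-cyclic-splitting} to place every block at a white vertex and the black/$\integers^2$ dichotomy to exclude black vertices, and then deducing maximality from uniqueness of the fixed vertex.
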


\begin{proof}
The proof is a collection of citations to results in the literature; see \cite[Prop. 4.11]{HagenWise:new-polynomial} for details.  The only addition is \eqref{item:linear-blocks}, which follows from \eqref{item:linear-bipartite} and Remark \ref{rem:blocks-in-graph-of-groups}.
\end{proof}

\begin{remark}[Comparison to graph manifolds]\label{rem:3-manifold-comparison}
In the 3-manifold case, the blocks are the virtually $\F_k\times \integers$ pieces corresponding to the Seifert pieces (a Seifert manifold with nonempty boundary has virtually $\F_k\times\integers$ fundamental group).  In the manifold case, the blocks are always unbranched because each JSJ torus has exactly two blocks attached to it.  For mapping tori of linear growth $\phi\in\Out(\F)$, a torus can have more than two blocks attached.  So, in the geometric case, Proposition \ref{prop:split-over-tori} treats the JSJ tori as black vertex groups, and the blocks are the white vertices. We will see that being unbranched means there are two blocks per ``JSJ torus'', as in the geometric case. We refer the reader to  Example \ref{exmp:cat(0)-non-hhg}. We also note that linear mapping tori have cyclic splittings resembling those in Proposition \ref{prop:cyclic-hierarchy}, but they are not acylindrical (see Example \ref{exmp:cat(0)-non-hhg}), which is why one needs Proposition \ref{prop:split-over-tori}.
\end{remark}

\begin{remark}[Nontrivial splitting of terminal vertex groups]\label{rem:cyclic-hierarchy-suffix}
Consider a UPG element $\phi$ and let $\maptor=\F\rtimes_\phi\langle t\rangle$.  Suppose that $d\geq 2$ and consider the topmost edges splitting from Proposition \ref{prop:cyclic-hierarchy}.  Let $e$ be an edge in the underlying graph of the graph of groups, and let $e^-,e^+$ be the initial and terminal vertex groups, so that $\maptor_{e^\pm}=\F_{e^\pm}\rtimes_{\phi_{e^\pm}} \langle t_{e^\pm}\rangle$.  The edge group is a cyclic group $\maptor_e=\langle t_e\rangle$.

As noted in the proof of \cite[Prop. 2.5]{AndrewHughesKudlinska}, the edge-homomorphism $\maptor_e\to\maptor_{e^+}$ is given by $t_e\mapsto w_et_{e^+}$, where the nontrivial conjugacy class $[w_e]$ in $\F$ grows polynomially with degree $d-1$ under iterations of $\phi$.  

This implies that $\phi_{e^+}$ has polynomial growth rate at least $1$.  If it is more than $1$, then Proposition \ref{prop:cyclic-hierarchy} applies to $\maptor_{e^+}$, giving a nontrivial action of $\maptor_{e^+}$ on a Bass-Serre tree $T_{e^+}$ (with cyclic edge stabilisers) where the element $w_et_{e^+}$ is a hyperbolic element.  If $\phi_{e^+}$ is linearly growing, then Proposition \ref{prop:split-over-tori} gives an action of $\maptor_{e^+}$ on a tree $T_{e^+}$, where again $w_et_{e^+}$ is a hyperbolic element.  In either case, the splitting of $\maptor_{e^+}$ provided by whichever of the two proposition applies is nontrivial, and $w_et_{e^+}$ has an axis in the Bass-Serre tree.

Now, in $\maptor_{e^-}$, it is possible that $\phi_{e^-}$ has growth rate $0$, and it is possible that $\F_{e^-}$ has rank $0$ or $1$.  By the previous discussion, these things can only occur if all edges of the underlying graph incident to $e^-$ are outgoing.  In any case, the edge homomorphism sends $t_e$ to $t_{e^-}$.  Now, $\maptor_{e^-}$ also has an action on a tree $T_{e^-}$: if $\phi_{e^-}$ has superlinear growth, it is the Bass-Serre tree given by Proposition \ref{prop:cyclic-hierarchy}, and if the growth is at most linear, it is the Bass-Serre tree from Proposition \ref{prop:split-over-tori}; the $0$--growth case is a special case of the latter, where $T_{e^-}$ is a single vertex.  In any case, $\langle t_{e^-}\rangle$ is a maximal cyclic subgroup of a vertex stabilizer in $T_{e^-}$.  Moreover, this vertex stabilizer is itself a mapping torus (of a possibly growth--$0$) automorphism of a (possibly rank--$0$ or $1$ free group) whose $\integers$ semidirect factor is generated by $t_{e^-}$.  
\end{remark}

\subsection{Unbranched mapping tori}\label{subsec:unbranched-mapping-tori}
Now we discuss how to characterize unbranched mapping tori. The first two lemmas are used in the proof of Theorem \ref{thm:main}.

\begin{lem}\label{lem:valence}
Let $\Delta$ be the graph of groups decomposition of $\maptor$ from Proposition \ref{prop:split-over-tori}, where $\phi$ is a linearly growing UPG. Let $v\in\vertices(\Delta)$ be black and let $\tilde v\in T$ be a lift of $v$ to the Bass-Serre tree $T$.  Then $v$ has the same valence in $\Delta$ as $\tilde v$ has in $T$.
\end{lem}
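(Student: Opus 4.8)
The plan is to read off the valence of $\tilde v$ in $T$ directly from the edge monomorphisms recorded in Proposition~\ref{prop:split-over-tori}, using the standard description of links in a Bass--Serre tree.

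First I would recall that, for the graph of groups with underlying graph $\Delta$ and Bass--Serre tree $T$, the edges of $T$ incident to a lift $\tilde v$ of $v$ are in $\stabilizer_G(\tilde v)$--equivariant bijection with the disjoint union, over the oriented edges $e$ of $\Delta$ with $e^-=v$, of the coset spaces $G_v/\image(G_e\to G_v)$. Since $\Delta$ is bipartite by Proposition~\ref{prop:split-over-tori}\eqref{item:linear-bipartite}, it has no loop edges, and since $v$ is black every edge of $\Delta$ incident to $v$ is oriented with $e^-=v$; hence each geometric edge at $v$ contributes a single term, giving
\[
\valence_T(\tilde v)\;=\;\sum_{e\colon e^-=v}\bigl[G_v:\image(G_e\to G_v)\bigr].
\]
So the whole statement reduces to showing that every edge monomorphism $G_e\to G_v$ into the black vertex group $G_v$ is onto.

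For the second step I would fix an edge $e$ with $e^-=v$ and unwind the edge map. By Proposition~\ref{prop:split-over-tori}\eqref{item:linear-edges} the edge group is $G_e=\langle r_e\rangle\times\langle t_e\rangle$, and the image of $t_e$ in $G_{e^-}=G_v$ is $t_{e^-}=t_v$; by Proposition~\ref{prop:split-over-tori}\eqref{item:linear-bipartite} the image of $r_e$ in $F_{e^-}=F_v$ is the generator $r_v$ of the rank--one free group $F_v$. Thus the image of $G_e\to G_v$ contains both $r_v$ and $t_v$, which together generate $G_v=\langle r_v\rangle\times\langle t_v\rangle$, so the map is surjective and $[G_v:\image(G_e\to G_v)]=1$. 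Plugging this back in gives $\valence_T(\tilde v)=|\{e\colon e^-=v\}|=\valence_\Delta(v)$.

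I do not expect a genuine obstacle here: all the real content is already packaged into Proposition~\ref{prop:split-over-tori}, and the only points requiring care are bookkeeping ones — namely that valence in a graph of groups is computed with edge multiplicities (no issue, since bipartiteness forbids loops) and that one must use the correct, black--to--white, orientation of the edges so that the relevant edge monomorphism is the one landing in the black vertex group.
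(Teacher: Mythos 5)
Your proposal is correct and follows essentially the same route as the paper: both hinge on the edge monomorphisms into black vertex groups being surjective (which is part of Proposition~\ref{prop:split-over-tori}\eqref{item:linear-bipartite}), so that each edge of $\Delta$ at $v$ has a unique lift at $\tilde v$. The only cosmetic difference is that you re-derive the surjectivity from items \eqref{item:linear-edges} and \eqref{item:linear-bipartite} and phrase the conclusion via the coset-counting formula for links in $T$, whereas the paper cites the surjectivity directly and phrases the conclusion as a homeomorphism of stars.
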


\begin{proof}
Let $e$ be an edge of $\Delta$ incident to $v$; by Proposition \ref{prop:split-over-tori}.\eqref{item:linear-bipartite}, $e^-=v$ and the edge map $\maptor_e\to \maptor_v$ is an isomorphism.  Hence $e$ has a unique lift to $T$ incident to $\tilde v$, so the projection $T\to\Delta$ restricts to an homeomorphism from the star of $\tilde v$ to the star of $v$, as required.
\end{proof}

\begin{lem}\label{lem:linear-unbranched}
Let $\phi\in\Out(\F)$ be a linearly-growing UPG and let $\Delta$ be as in Proposition \ref{prop:split-over-tori}.  Then $\maptor$ is unbranched if and only if every black $b\in\vertices(\Delta)$ has valence  $2$.
\end{lem}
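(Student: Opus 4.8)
The plan is to convert Definition \ref{defn:block-unbranched} into a statement about the action of $\maptor$ on the Bass-Serre tree $T$ underlying the graph of groups $\Delta$ of Proposition \ref{prop:split-over-tori}, and then read it off from the black-vertex valences using Lemma \ref{lem:valence}. First I would record the basic dictionary. By Proposition \ref{prop:split-over-tori}.\eqref{item:linear-blocks} the blocks of $\maptor$ are exactly the stabilizers $\stabilizer_\maptor(\tilde v)$ of white vertices $\tilde v$ of $T$; distinct white vertices give distinct blocks, because a subgroup fixing two vertices fixes the segment between them and hence lies in an edge stabilizer, which is $\cong\integers^2$ by \eqref{item:linear-edges}, whereas a block contains a nonabelian free group. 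The same observation shows that for distinct white $\tilde v,\tilde v'$ the intersection $\stabilizer_\maptor(\tilde v)\cap\stabilizer_\maptor(\tilde v')$ is abelian, hence of infinite index in the block $\stabilizer_\maptor(\tilde v)$; so in testing the unbranched condition one only ever needs to consider triples of \emph{distinct} white vertices. Two more ingredients I would note: for a black vertex $\tilde b$ of $T$, $\stabilizer_\maptor(\tilde b)\cong\integers^2$ by \eqref{item:linear-vertices}, and since the edge maps to black vertices are isomorphisms (\eqref{item:linear-bipartite}) any element fixing $\tilde b$ fixes every incident edge and hence every neighbour of $\tilde b$; and two distinct black vertices adjacent to a common white vertex have cyclic stabilizer-intersection by \eqref{item:linear-cylic-intersection}. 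Finally, minimality of the action (\eqref{item:linear-acylindrical}) together with Lemma \ref{lem:valence} shows that every black vertex of $\Delta$ has valence at least $2$, so the claim reduces to: $\maptor$ is unbranched if and only if no black vertex of $\Delta$ has valence $\geq 3$.

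For the ``only if'' direction I would argue the contrapositive. If $b\in\vertices(\Delta)$ is black of valence $\geq 3$, then by Lemma \ref{lem:valence} a lift $\tilde b\in T$ has valence $\geq 3$ as well, so $\tilde b$ has three distinct white neighbours $\tilde v_1,\tilde v_2,\tilde v_3$; setting $B_i=\stabilizer_\maptor(\tilde v_i)$ we obtain three distinct blocks with $[B_i:B_i\cap B_j]=\infty$ for $i\neq j$ (by the dictionary above), while $\stabilizer_\maptor(\tilde b)\cong\integers^2$ is contained in $B_1\cap B_2\cap B_3$, since each $B_i$ fixes the neighbour $\tilde v_i$. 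Thus $B_1\cap B_2\cap B_3$ is not virtually cyclic, so $\maptor$ is not unbranched.

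For the converse I would again argue contrapositively: assume $\maptor$ is not unbranched, so there are three distinct white vertices $\tilde v_1,\tilde v_2,\tilde v_3$ with $B_1\cap B_2\cap B_3$ not virtually cyclic, where $B_i=\stabilizer_\maptor(\tilde v_i)$. Let $m$ be their median in $T$ and let $Y$ be their convex hull, which is an arc or a genuine tripod and is fixed pointwise by $B_1\cap B_2\cap B_3$. If $m$ is white, then $m$ has at least two distinct neighbours inside $Y$ --- the two neighbours of $m$ on $Y$ when $Y$ is an arc with $m$ an interior vertex, or the first vertices of the legs when $Y$ is a tripod centred at $m$ --- and these are black by bipartiteness (\eqref{item:linear-bipartite}); hence $B_1\cap B_2\cap B_3$ lies inside the intersection of the stabilizers of two distinct black vertices adjacent to the white vertex $m$, which is cyclic by \eqref{item:linear-cylic-intersection}, a contradiction. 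Therefore $m$ is black. Since in the arc case $m$ would be one of the white $\tilde v_i$, we must be in the tripod case, where $m$ has valence $\geq 3$ in $T$; by Lemma \ref{lem:valence} its image in $\Delta$ is then a black vertex of valence $\geq 3$. Combining the two directions gives the equivalence.

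I do not expect a serious obstacle: the proof is essentially bookkeeping with the bipartite tree $T$, with all the real content imported from Proposition \ref{prop:split-over-tori} --- especially the cyclic-intersection property \eqref{item:linear-cylic-intersection} (which is where acylindricity enters) and the surjectivity of the edge maps onto black vertex groups in \eqref{item:linear-bipartite}. The only slightly delicate points are making sure that the two black vertices witnessing the cyclic intersection genuinely lie in the convex hull $Y$ and are distinct --- handled by the median/tripod dichotomy together with bipartiteness --- and justifying that black vertices always have valence at least $2$, which comes from minimality of the action.
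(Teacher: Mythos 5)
Your proposal is correct and follows essentially the same approach as the paper's proof: both directions use the dictionary ``blocks $=$ white-vertex stabilizers'' from Proposition~\ref{prop:split-over-tori}.\eqref{item:linear-blocks}, Lemma~\ref{lem:valence} for transferring valence between $T$ and $\Delta$, the cyclic-intersection property \eqref{item:linear-cylic-intersection}, and the bipartite structure to analyse the convex hull of three white vertices. The only cosmetic difference is that you phrase the ``if'' direction via the median/tripod dichotomy while the paper speaks of the unique valence--$>2$ vertex of the convex hull $H$ (and handles possible coincidences among the $v_i$ there rather than ruling them out upfront), but these are the same argument.
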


\begin{proof}
Color the vertices of $T$ according to the colors of their images in $\Delta$.  Valence $1$ black vertices are impossible by minimality of the action on $T$.  Let $b\in\vertices(\Delta)$ be black and let $e_1,e_2,e_3$ be distinct incident edges.  Let $\tilde b,\tilde e_i$ be lifts of $b,e_i$ to the Bass-Serre tree $T$ with each $\tilde e_i$ incident to $\tilde b$.  Let $\maptor_i=\stabilizer_\maptor(\tilde v_i)$, where $\tilde v_i$ is the white vertex at which $\tilde e_i$ terminates.  By Proposition \ref{prop:split-over-tori}, $\maptor_i\cong \F_i\times \langle t_i\rangle$ is a block.  So for $i\neq j$, the subgroup $\maptor_i\cap \maptor_j\leq \stabilizer_\maptor(\tilde b)$ is abelian and hence of infinite index in $\maptor_i$ and $\maptor_j$.  On the other hand, $\maptor_1\cap \maptor_2\cap \maptor_3=\stabilizer_\maptor(\tilde b)\cong\integers^2$, which is impossible if $\maptor$ is unbranched.

Now suppose black vertices in $\Delta$ (and hence in $T$, by Lemma \ref{lem:valence}) have valence $2$.  Let $B_1,B_2,B_3\leq \maptor$ be blocks in $\maptor$ such that $B_i\cap B_j$ has infinite index in $B_i$ for $i\neq j$.  Proposition \ref{prop:split-over-tori}.\eqref{item:linear-blocks} implies that each $B_i$ fixes a unique vertex $v_i\in T$, and $v_i$ is a white vertex.

Let $H=[v_1,v_2]\cup[v_2,v_3]\cup[v_3,v_1]\subset T$, which is a subtree of $T$, fixed pointwise by $B_1\cap B_2\cap B_3$ and having at most three leaves.  So there is at most one vertex $m\in H$ of valence more than $2$.  If such an $m$ exists, then it must be white and distinct from $v_1,v_2,v_3$, so $B_1\cap B_2\cap B_3$ is cyclic by Proposition \ref{prop:split-over-tori}.\eqref{item:linear-cylic-intersection}.  If there is no such $m$, then (say) $v_2\in[v_1,v_3]$.  If $v_2\not\in\{v_1,v_3\}$, then Proposition \ref{prop:split-over-tori}.\eqref{item:linear-cylic-intersection} implies $B_1\cap B_2\cap B_3$ is virtually cyclic.  Otherwise (say) $v_1=v_2$, so by maximality of blocks, $B_1=B_2$, contradicting that $[B_1:B_1\cap B_2]=\infty$.
\end{proof}

\section{Unbranched blocks from coarse medians}\label{sec:coarse-median-implies-unbranched}

\begin{defn}[Definition 4.2 \cite{MunroPetyt}]\label{2RBF} An $n-$dimensional \emph{richly branching flat} ($n$--\emph{RBF}) is a piecewise linear space $\mathcal R$ consisting of a base flat $\mathcal B$, which is an isometric copy of $\mathbb R^n$,
and half-flats $\mathbb R^{n-1} \times [0, \infty)$ glued to $\mathcal B$ the following way: choose $n$ linearly independent vectors $\{v_i\}^n_{i=0}$ in $\mathcal B$ and and choose a coarsely dense subset  $P_i\subset \mathbb R$, for all $i$. For each $i$, glue the boundary of a half flat to $\mathcal B$ along each affine subspace $pv_i+v^{\bot}_i$, where $p\in P_i$. 
\end{defn}

Since $\maptor$ has cohomological dimension $2$, Corollary 3.4 in \cite{MunroPetyt} says that, if $\maptor$ is coarse median, then it is coarse median of rank at most $2$, from which Theorem 4.3 of \cite{MunroPetyt} then implies that, if $\maptor$ is coarse median, then there is no quasi-isometric embedding of a $2$--RBF in $\maptor$.  This gives the implication \eqref{item:coarse-median}$\Rightarrow$\eqref{item:no-2-RBF} in  Theorem \ref{thm:main}, and will also be used in the proof of  following lemma. 

\begin{lem}[\eqref{item:coarse-median}$\implies$\eqref{item:unbranched-blocks}]\label{lem:coarse-median-implies-unbranched-blocks}
If $\maptor$ admits a coarse median, then $\maptor$ is unbranched.  In fact, if $\maptor$ is not unbranched, then $\maptor$ has a quasi-isometrically embedded $2$--RBF.
\end{lem}

\begin{proof}
Suppose $\Phi$ has polynomial growth rate $d\geq 0$ and that $\maptor$ is coarse median.  Since it suffices to prove that a finite index subgroup of $\maptor$ is unbranched, by Lemma \ref{lem:unbranched-index}, and since being coarse median is a QI invariant, assume that $\Phi$ is a UPG element \cite{BFH:kolchin}.  

\textbf{Linear growth.}  Suppose that $d\leq 1$ and $\maptor$ is not unbranched.  We will construct a quasi-isometrically embedded $2$--RBF in $\maptor$.  We will freely use the fact that abelian subgroups of $\maptor$ are quasi-isometrically embedded \cite[Cor. 4.5]{Button:aspects}.  

Consider the acylindrical splitting from Proposition \ref{prop:split-over-tori}.  By Lemma \ref{lem:linear-unbranched}, the underlying graph $\Delta$ has a black vertex $b$ of valence at least $3$.  Let $e_1,e_2,e_3$ be distinct edges of $\Delta$ with $e_i^-=b$ for all $i$.  Recall that $\maptor_b$ is generated by commuting elements $r_b,t_b$, and for each $i$, the edge map on $\langle r_b\rangle\oplus\langle t_b\rangle$ sends $r_b$ to $p_{e_i}$ and $t_b$ to $p_{e_i}^{k(e_i)}t_{e_i^+}$.  Let $L_i=\langle r_b^{-k(e_i)}t_b\rangle\leq \maptor_b$.  

Fix some maximal cyclic subgroup $\langle f_i\rangle$ of $F_{e_i^+}$ that intersects $\langle p_{e_i}\rangle$ trivially.  Since $t_{e_i^+}$ is central in $\maptor_{e_i^+}$, the subgroup $\maptor_{e_i^+}$ contains an $\integers^2$ subgroup $M_i=\langle f_i,t_{e_i^+}\rangle$ intersecting $\langle p_{e_i},t_{e_i^+}\rangle$ along $\langle t_{e_i^+}\rangle$. Let $M_i^+$ be the quasi-isometrically embedded subspace of $M_i$ consisting of elements of the form $f_i^at_{e_i^+}^b$ with $a,b\in\integers$ and $a\geq 0$.  

We now construct a $2$--RBF in $\maptor$ as follows.  The base quasiflat is $\maptor_b$.  For $i\in\{1,2,3\}$, and each $n\in\integers$, attach $p_{e_i}^nM_i^+$ to $\maptor_b$ by identifying $p_{e_i}^n\langle t_{e_i^+}\rangle$ with $r_b^n L_i$ via $r_b^n(r_b^{-k(e_i)}t_b)^m\mapsto p_{e_i}^nt_{e_i^+}^m$.  The inclusions of the base quasiflat and the various translates of the $M_i$ into $\maptor$ induce a quasi-isometric embedding of the $2$--RBF, by construction, since, for $i\neq j$, the quasilines $L_i$ and $L_j$ cannot be coarsely parallel, by Proposition \ref{prop:split-over-tori}.\eqref{item:linear-finite-index}. 

\textbf{Superlinear polynomial growth.}  Suppose that $d>1$ and that $\maptor$ is not unbranched.  By Proposition \ref{prop:cyclic-hierarchy}, $\maptor$ acts acylindrically on a tree $T$ with cyclic edge stabilizers, so by Proposition \ref{prop:unbranched-cyclic-splitting}, some vertex stabilizer $\maptor_v$ (the mapping torus of an autormorphism of polynomial growth rate $d_v\leq d-1$) fails to be unbranched.  By induction on $d$, with the base case $d= 1$ done above, there is a $2$--RBF in $\maptor_v$.  Since $\maptor_v$ is quasi-isometrically embedded in $\maptor$, by \cite[Lem. 4.1]{Mutanguha}, there is a quasi-isometrically embedded $2$--RBF in $\maptor$.

\textbf{Conclusion in the polynomial case.}  Thus far, we have shown that if $\Phi$ has polynomial growth and $\maptor$ is not unbranched, then $\maptor$ contains a $2$--RBF. 
 This contradicts the assumption that $\maptor$ admits a coarse median, by combining \cite[Thm. 3.6, Thm. 4.3]{MunroPetyt} with the observation that $\maptor$ has geometric dimension $2$.  

\textbf{Exponential growth.}  If $\Phi$ has exponential growth, then Proposition \ref{prop:relhyper} provides a relatively hyperbolic structure; we use the notation from that proposition.  If $\maptor$ has branching blocks, then by Proposition \ref{prop:relhyp-unbranched}, some peripheral subgroup $\maptor_i$ has branching blocks, and since $\maptor_i$ is the mapping torus of a polynomial growth automorphism, the previous part of the proof produces a $2$--RBF $F\subset \maptor_i$ that is quasi-isometrically embedded.  But $\maptor_i\hookrightarrow\maptor$ is a quasi-isometric embedding, so $F$ is a $2$--RBF in $\maptor$.   But since $\maptor$ has geometric dimension $2$, applying \cite{MunroPetyt} again shows that $\maptor$ cannot have a coarse median.  This completes the proof.
\end{proof}

\section{Hierarchically hyperbolic structures in the unbranched case}\label{sec:build-HHG}
In this section, we prove:

\begin{lem}[$\eqref{item:unbranched-blocks}\implies\eqref{item:virtual-hhg}$]\label{lem:unbranched-implies-HHG}
Suppose that $\phi\in\Out(F)$ has the property that $\maptor=\F\rtimes_\phi\integers$ is unbranched.  Then $\maptor$ has a finite-index subgroup $\maptor'$ that is a colorable hierarchically hyperbolic group.  In particular, $\maptor$ admits a coarse median of finite rank.
\end{lem}

\begin{proof}
Being coarse median is a quasi-isometry invariant property, and hierarchically hyperbolic groups are coarse median of finite rank \cite[Thm. 7.3]{HHSII}, so the first part of the lemma implies the second. Next, we claim that $\maptor$ is virtually a colorable hierarchically hyperbolic group.  We divide into cases according to the growth rate of $\phi$.

\textbf{Linear growth.} If $\phi$ has polynomial growth rate at most $1$, then $\maptor'$ is virtually a colorable HHG, by Lemma \ref{lem:unbranched-blocks-linear}, which is an application of \cite{HRSS}.

\textbf{Superlinear polynomial growth.}  Proposition \ref{prop:polynomial-growth-is-virtually-acceptable} below implies that $\maptor$ is virtually a colorable hierarchically hyperbolic group in the case where $\phi$ has polynomial growth rate at least $1$.  (This is proved by induction below, with the base case being the linear growth case.)

\textbf{Exponential growth.}  By Proposition \ref{prop:relhyper}, $\maptor$ is hyperbolic relative to a finite collection $\{\maptor_i\}_{i\in I}$ of mapping tori of polynomially-growing automorphisms.  By the preceding case, for each $i\in I$ there is a finite-index subgroup $\maptor'_i\leq \maptor_i$ admitting a hierarchically hyperbolic group structure $(\maptor'_i,\mathfrak S_i)$.  By Lemma \ref{lem:separability}, there is a finite-index normal subgroup $\maptor'\leq\maptor$ such that $\maptor'\cap \maptor_i\leq \maptor_i'$ for all $i\in I$.  For each $i$, let $\maptor''_i=\maptor'\cap \maptor_i$, and note that $\maptor_i''$ inherits a hierarchically hyperbolic group structure from $\maptor_i'$.  On the other hand, $\maptor'$ is hyperbolic relative to a finite collection of subgroups, each conjugate in $\maptor$ to some $\maptor''_i$.  So, by \cite[Thm. 9.1]{HHSII}, $\maptor'$ is hierarchically hyperbolic, as claimed.  Finally, since the HHG structure  $(\maptor',\mathfrak S)$ constructed in the proof of \cite[Thm. 9.1]{HHSII} does not introduce any orthogonality, one verifies easily that colorability of $(\maptor',\mathfrak S)$ is inherited from colorability of the $(\maptor_i,\mathfrak S_i)$.
\end{proof}

We will handle the polynomial case using the virtual cyclic hierarchy from Proposition \ref{prop:cyclic-hierarchy}, where the terminal vertex groups are mapping tori of linearly growing automorphisms, which are hierarchically hyperbolic by \cite{MunroPetyt}.  The following abstracts the properties that we need to verify at each stage of our induction on the length of the hierarchy.

\begin{defn}[Acceptable graph of groups]\label{defn:acceptable-cyclic-HHG-graph}
Let $\Omega$ be a finite graph and let $\mathcal G$ be a graph of groups with underlying graph $\Omega$.  For each $v\in\vertices(\Omega)$, let $G_v$ be the corresponding vertex group, and for each (oriented) edge $e$, let $G_e$ be the corresponding edge group.  Let $e^\pm$ be the endpoints of $e$, and let $\iota_e^\pm:G_e\to G_{e^\pm}$ be the edge-monomorphisms.  We call the graph of groups $(\Omega,\{G_v\}_v,\{G_e\}_e,\{\iota_e^\pm\})$ \emph{acceptable} if all of the following hold:
\begin{enumerate}
    \item For each $v\in\vertices(\mathcal G)$, there is a hierarchically hyperbolic group structure $(G_v,\mathfrak S_v)$.  For each $U\in\mathfrak S_v$, let $\pi_U:G_v\to \mathcal CU$ be the associated projection to a hyperbolic space.  Let $S_v\in\mathfrak S_v$ be the unique $\nest$--maximal element.\label{item:vertx-hhg}

    \item For each edge $e$, the group $G_e$ is infinite cyclic, generated by an element $g_e$.\label{item:edge-cyclic}

        \item For each $e$, the element $\iota^+_e(g_e)\in G_{e^+}$ is loxodromic on the hyperbolic space $\mathcal CS_{e^+}$.\label{item:edge-tree-hyperbolic}

    \item For each $e$, consider the element $z_e=\iota^-_e(g_e)\in G_{e^-}$.  Then $\langle z_e\rangle$ is hierarchically quasiconvex in $(G_{e^-},\mathfrak S_{e^-})$ and, more strongly, there exists a unique $U_e\in\mathfrak S_{e^-}$ such that $z_e$ is loxodromic on $\mathcal CU_e$, and $\mathcal CU_e$ is quasi-isometric to $\reals$ and $\nest$--minimal in $\mathfrak S_{e^-}$.\label{item:edge-hqc}

    \item Each $\iota^+_e(g_e)$ generates a maximal elementary subgroup of $G_{e^+}$.\label{item:edge-maximal}

    \item For each $v\in\vertices(\Omega)$, the following holds.  Let $U\in\mathfrak S_v$.  Then $\stabilizer_{G_v}(U)$ acts coboundedly on the \emph{standard product region} $P_U$ in $G_v$.\label{item:stabiliser-cobounded}
\end{enumerate}
Moreover, letting $T$ be the Bass-Serre tree, we require $G$ to act $2$--acylindrically on $T$
\end{defn}

\begin{remark}\label{rem:product-regions}
Standard product regions in hierarchically hyperbolic groups are introduced in \cite[Sec. 5]{HHSII} and a more detailed treatment is given in, for instance, \cite[Sec. 17]{CHRK}.  We just need the following facts.  Given an HHG $(G,\mathfrak F)$, to each $U\in\mathfrak F$ there is an associated subset $P_U\subseteq G$, the \emph{standard product region}.  For each $g\in G$, we have $P_{gU}=gP_U$, and in particular $\stabilizer_G(U)$ acts on $P_U$.  This action need not be cobounded (see e.g. \cite[Rem. 8.25]{HHSII} but in practice it often is.  Finally, $P_U$ is hierarchically quasiconvex (with HQC parameters independent of $U$) and for all $W\in\mathfrak F$, the projection $\pi_W:G\to\mathcal CW$ has the following properties with respect to $P_U$: 
\begin{itemize}
    \item $\pi_W(P_U)$ coarsely coincides with $\pi_W(G)\asymp\mathcal CW$ when $W\nest U$ or $W\orth U$, and
    \item $\pi_W(P_U)$ has uniformly bounded diameter for all other $W$.
\end{itemize}
Some other results on product regions are used below; references will be given as needed.
\end{remark}

\begin{defn}[Very HQC element]\label{defn:very-HQC-elements}
Let $(G,\mathfrak S)$ be a hierarchically hyperbolic group.  Recall from \cite[Sec. 6]{DHSbound} that for all infinite-order $g\in G$, the set $\bigset(g)$ of $U\in\mathfrak S$ such that $\pi_U(\langle g\rangle)$ is unbounded is nonempty and consists of pairwise orthogonal elements.

An element $g\in G$ is \emph{very HQC} if one of the following holds:
\begin{itemize}
    \item $\bigset(g)=\{S\}$, where $S\in\mathfrak S$ is the unique $\nest$--maximal element, or
    \item $\bigset(g)=\{U\}$, where $U\in\mathfrak S$ is $\nest$--minimal and $\mathcal CU$ is quasi-isometric to $\reals$.
\end{itemize}
In the former case, we say $g$ is \emph{top-level} and in the latter, $g$ is \emph{bottom-level}.
\end{defn}

\begin{lem}[Combination lemma]\label{lem:combination-theorem}
Let $(\Omega,\{G_v\}_v,\{G_e\}_e,\{\iota_e^\pm\})$ be an acceptable graph of groups, with total group $G$ and Bass-Serre tree $T$.  Then $G$ admits a hierarchically hyperbolic structure $(G,\mathfrak S)$ with the following properties:
\begin{itemize}
    \item $g\in G$ is a top-level very HQC element if and only if $g$ is loxodromic on $T$.
    \item For all $v\in \vertices(\Omega)$, the element $g\in G_v$ is bottom-level very HQC in $(G,\mathfrak S)$ if and only if it is bottom-level very HQC in $(G_v,\mathfrak S_v)$.
    \item For each $U\in \mathfrak S$, the group $\stabilizer_G(U)$ acts on the product region $P_U$ coboundedly.
\end{itemize}
Moreover, if each $(G_v,\mathfrak S_v)$ is colorable, then so is $(G,\mathfrak S)$.
\end{lem}

\begin{proof}
For each vertex $v\in\vertices(\Omega)$, by hypothesis we have a hierarchically hyperbolic group $(G_v,\mathfrak S_v)$.  For each $e\in\edges(\Omega)$, let $g_e$ generate the infinite cyclic group $G_e$, and let $z^\pm_e=\iota^\pm_e(g_e)$.  Let $\mathcal E^\pm_v$ be the set of $e\in\edges(\Omega)$ such that $e^\pm=v$.  

We now modify each $\mathfrak S_v$ as follows. Consider the finite collection $\mathcal H=\{\langle z_e^+\rangle:e\in\mathcal E_v^+\}$ of subgroups of $G_v$.  By Definition \ref{defn:acceptable-cyclic-HHG-graph}, each $z_e^+$ is loxodromic on the hyperbolic space $\mathcal CS_v$ corresponding to the $\nest$--maximal element $S_v$ of $\mathfrak S_v$.  Hence we can apply \cite[Prop. 6.14]{HHSIII} --- which applies since the edge groups $\langle z_e^+\rangle$ are maximal elementary subgroups that are top-level HQC --- to obtain a new HHG structure $(G_v,\mathfrak S'_v)$ which is identical to $(G_v,\mathfrak S_v)$, except that $\mathcal CS_v$ is replaced by the hyperbolic space $\mathcal C'S_v$ obtained by coning off each $G$--translate of each $\langle z_e^+\rangle\cdot \pi_{S_v}(1)$, and adding to $\mathfrak S_v$, for each coset $a\langle z_e^+\rangle$, a new element that is nested in $S_v$ and transverse to everything else. In summary:
\begin{center}
\emph{$(G_v,\mathfrak S_v')$ is an HHG structure where $\iota^\pm_e(g_e)$ is bottom-level very HQC for all $e\in\mathcal E^+_v\cup\mathcal E^-_v$.}  
\end{center}

(Note that this was already true of $e\in \mathcal E^-_v$ for the HHG structure $\mathfrak S_v$, because of Definition \ref{defn:acceptable-cyclic-HHG-graph}, and we did not change the bigset of such $g_e$ when applying \cite[Prop. 6.14]{HHSIII}.)

Now we will apply the combination theorem for hierarchically hyperbolic groups, namely \cite[Cor. 8.24]{HHSII}, whose hypotheses we check as follows.

\textbf{Hierarchical quasiconvexity.} The hierarchical quasiconvexity of edge groups in vertex groups, which is hypothesis (1) of \cite[Cor. 8.24]{HHSII}, holds because each $g_e$ maps to a bottom-level very HQC element in each of its incident vertex groups, equipped with the HHG structures $\mathfrak S_{e^\pm}'$.  (Note that $\iota_e^\pm$ is a uniform-quality quasi-isometric embedding since cyclic subgroups of HHGs are undistorted, by \cite[Thm. 7.1]{DHSbound} and \cite[Thm. 3.1]{DHScorr}.)

\textbf{Fullness.}  The requirement that each $\iota_e^\pm$ be a \emph{full hieromorphism} is why we passed to $\mathfrak S_v'$ for each $v$.  This requirement is hypothesis (2) of \cite[Cor. 8.24]{HHSII}.  First, we equip each $G_e$ with the trivial HHG structure $(G_e,\mathfrak S_e)$, where $\mathfrak S_e$ has a single element $S_e$ with $\mathcal CS_e$ the obvious Cayley graph of $\langle g_e\rangle$.  Now, in $\mathfrak S_{e^\pm}'$, the element $\iota^\pm_e(g_e)=z_e^\pm$ has a unique element in its bigset, which we denote $U_e^\pm$, and $U_e^\pm$ is $\nest$--minimal, and $\mathcal CU_e^\pm$ is a quasiline on which $\langle z_e^\pm\rangle$ acts loxodromically.  The action of $\langle z_e^\pm\rangle$ on $\mathcal CU^\pm_e$ defines an equivariant map $\mathcal CS_e\to \mathcal CU^\pm_e$ making $\iota_e^\pm$ an equivariant hieromorphism.  The \emph{fullness} condition (see \cite[Defn. 8.1]{HHSII}) is satisfied since $\mathcal CS_e\to \mathcal CU^\pm_e$ is a quasi-isometry (the coarse surjectivity uses that $\mathcal CU_e^\pm$ is a quasiline by Definition \ref{defn:very-HQC-elements}) and since $U_e^\pm$ is $\nest$--minimal in $\mathfrak S_{e^\pm}'$.

\textbf{Pairwise-orthogonal sets.}  We need to check hypothesis (5) of \cite[Cor. 8.24]{HHSII} (see also \cite[Rem. 8.25]{HHSII}), which says that for each $v\in\vertices(\Omega)$, the action of $G_v$ on the set $\mathfrak S_v'$  has the property that there are finitely many orbits of subsets $\mathcal U$ that consist of pairwise orthogonal elements.  We deduce this from the assumption in Definition \ref{defn:acceptable-cyclic-HHG-graph} that stabilizers of elements of $\mathfrak S_v$ act on product regions coboundedly, as follows.

The assumption implies that each $\stabilizer_{G_v}(U),\ U\in\mathfrak S'_v$, acts coboundedly on the associated $P_U$. Indeed, this holds for $U\in\mathfrak S_{v}$ by hypothesis; the new elements $U\in\mathfrak S_v'-\mathfrak S_v$ are not orthogonal to anything, and they are $\nest$--minimal, so $P_U$ coarsely coincides with the quasi-axis of some $z_e^+$, so $\langle z_e^+\rangle$ acts on $P_U$ coboundedly.  Now apply Lemma \ref{lem:cocompact-product} to each $(G_v,\mathfrak S_v')$.

\textbf{Bounded supports.}  Hypothesis (3) of \cite[Cor. 8.24]{HHSII}, bounded supports, follows from the assumption that the $G$--action on the Bass-Serre tree $T$ is acylindrical.  Indeed, acylindricity provides $n<\infty$ such that if $v,w\in \vertices(T)$ are at distance at least $n$, then $|\stabilizer_G(v)\cap\stabilizer_G(w)|<\infty$ (in our setting, the $2$--acylindricity hypothesis allows us to take $n=3$).  On the other hand, from Definition \ref{defn:acceptable-cyclic-HHG-graph} and \cite[Defn. 8.5]{HHSII}, the proof of \cite[Lem. 8.20]{HHSII} implies the following: if $v,w\in\vertices(T)$ belong to a common support, then the corresponding vertex spaces (viewed as subsets of $G$) have unbounded coarse intersection.  Since these vertex spaces are $G$--cosets of vertex groups, and $\vertices(\Omega)$ is finite, it follows that $\stabilizer_G(v)\cap\stabilizer_G(w)$ is infinite cyclic, and hence $d_T(v,w)\leq n$, as required.

\textbf{Additional hypotheses.}  The combination theorem \cite[Cor. 8.24]{HHSII} has two additional hypotheses, called (4) and (6) in \cite{HHSII}.  However, as observed by Berlai and Robbio in \cite[Rem. 4.7]{BerRob}, hypothesis (4) of \cite[Cor. 8.24]{HHSII} is unnecessary (the authors of \cite{BerRob} explain the slight change to the proof needed to eliminate that hypothesis).  The remaining hypothesis is (6).  Here, again, Berlai and Robbio come to our aid: by \cite[Thm. 4.9]{BerRob}, our edge-maps satisfy hypothesis (6) of \cite[Cor. 8.24]{HHSII} (which Berlai and Robbio note as a consequence of their theorem in \cite[Rem. 5.1]{BerRob}) once we observe that the trivial HHG structures $(G_e,\mathfrak S_e),\ e\in\edges(\Omega)$, are \emph{concrete} in the sense of \cite[Defn. 3.10]{BerRob}, which holds vacuously since $\mathfrak S_e$ has a single element.  

\textbf{The HHG structure.}  We now apply \cite[8.24]{HHSII} to obtain an HHG structure $(G,\mathfrak S)$.  

\textbf{Very HQC elements.}  The proof of \cite[8.24]{HHSII} (specifically \cite[Defn. 8.14]{HHSII}) says that the unique $\nest$--maximal element of $\mathfrak S$ has associated hyperbolic space equal to the Bass-Serre tree $T$, so $g\in G$ is top-level very HQC if and only if $g$ is loxodromic on $T$.  Now, any $U\in\mathfrak S-\{T\}$ for which $\mathcal CU$ is unbounded has the property that $U\in\mathfrak S_v'$ for at least one $v\in\vertices(T)$, and the hyperbolic space $\mathcal CU$ is inherited from the HHG structure $\mathfrak S_v'$, which implies the claim about bottom-level very HQC elements.

\textbf{Persistence of colorability.}  $\mathfrak S=\mathfrak S_0\sqcup\mathfrak K$, where $\mathfrak S_0$ consists of $T$, together with a set of equivalence classes $[V]$, where $V\in\bigcup_{v\in \vertices(T)}\mathfrak S_v$ and the equivalence relation is from \cite[Defn. 8.5]{HHSII}.  The set $\mathfrak K$ comes from Definition 8.11 in \cite{HHSII}.  Now, $\{T\}$ and $\mathfrak S_0$ and $\mathfrak K$ are all $G$--invariant, and by the proof of \cite[Lem. 8.13]{HHSII}, no two elements of $\mathfrak K$ are orthogonal.  Hence it suffices to show that elements of $\mathfrak S_0$ in the same $G$--orbit are non-orthogonal.  If $[W],[V]\in\mathfrak S_0$ and $[W]\orth[V]$, then by definition, there exists $v\in\vertices(T)$ and representatives $W_v,V_v\in\mathfrak S_v$ of the two classes, with $W_v\orth V_v$ in $\mathfrak S_v$.  By colorability of $(G_v,\mathfrak S_v)$, the elements $W_v,V_v$ are in different $G_v$--orbits.  Now, suppose toward a contradiction that $g\in G$ has the property that $g[W]=[V]$.  

Let $T_{[V]},T_{[W]}\subset T$ be the support trees of $[V],[W]$ (Definition 8.5 in \cite{HHSII}).  Note that $T_{[W]}=gT_{[V]}$ and hence $v\in T_{[V]}\cap gT_{[V]}$.

We will show below that $gv=v$.  Now, $V_v$ is the unique representative of $[V]$ in $\mathfrak S_v'$, and $W_v$ is the unique representative of $[W]=g[V]$ in $\mathfrak S'_v$.  So if $gv=v$, then $gV_v$ is the unique representative of $g[V]$ in $\mathfrak S'_{gv}=\mathfrak S'_v$, so $gV_v=W_v$, which contradicts the above conclusion that $V_v,W_v$ are in different $G_v$--orbits. So it indeed suffices to show $gv=v$.

Suppose that $T_{[V]}=\{v\}$. Then $T_{[W]}=\{gv\}$, so since $v\in T_{[V]}\cap T_{[W]}$, we must have $gv=v$, as required.  The other option is that $T_{[V]}$ is nontrivial, which is equivalent to the statement that the equivalence class $[V]$ has more than one element.  By construction, this implies that the representative $V_v\in\mathfrak S_v'$ is $\nest$--minimal, and since $V_v\orth W_v$ in $\mathfrak S_v'$, we have the following for all $u\in T_{[V]}$ adjacent to $v$: in $\mathfrak S_u'$, the representative $V_u$ of $[V]$ is $\nest$--minimal and not in $\mathfrak S_u$ (i.e. it is one of the new quasilines added to make the edge-hieromorphisms full).  In particular, $V_u$ is not orthogonal to any element of $\mathfrak S_u'$.  Now, since $\mathcal CW_v$ is unbounded and $V_v\orth W_v$, there are infinitely many vertices in $T_{[V]}$ incident to $v$, so by $2$--acylindricity, $u$ is a leaf of $T_{[V]}$. We have shown that $T_{[V]}$ is a star whose central vertex $v$ is the unique vertex in $T_{[V]}$ where the representative $V_v$ is orthogonal to some other domain in the HHG structure at that vertex.  Hence, $T_{[W]}=gT_{[V]}$ is also a star, centred at $gv$, and $W_{gv}$ is orthogonal to $V_{gv}$.  Thus the central vertices of the stars $T_{[V]}$ and $T_{[W]}$ coincide, i.e. $w=gv=v$.  

Thus, in either case, $gv=v$, and this contradicts colorability of $(G_v,\mathfrak S_v')$ as noted above.  This proves that $(G,\mathfrak S)$ is colorable.

\textbf{Cobounded stabilizers.}  We have to check that $\stabilizer_G(U)$ acts coboundedly on $P_U$ for all $U\in\mathfrak S$, where $P_U$ is the standard product region in $G$.  The argument has three cases, provided by the proof of \cite[8.24]{HHSII} (specifically, by Definition 8.11 in \cite{HHSII}):
\begin{enumerate}
    \item If $U\in\mathfrak S$ is the $\nest$--maximal element, then $P_U=G$ and $\stabilizer_G(U)=G$, and the left multiplication action of $G$ on itself is cobounded, so we are done.

    \item \label{item:cobounded-equiv-class} If $U$ is an equivalence class, i.e. $U=[U_v]$ where $U_v\in\mathfrak S_v$ for some vertex $v$ of $T$, then $\stabilizer_G(U)$ must stabilize the subtree $T_{[U_v]}$ on which $[U_v]$ is supported, so by bounded supports, $\stabilizer_G(U)\leq \stabilizer_G(v)$ for some choice of $v\in T_{[U_v]}$, namely any vertex $v$ in $T_{[U_v]}$ that is fixed by $\stabilizer_G(U)$. Specifically, either $T_{[U_v]}=\{v\}$, or $T_{[U_v]}$ is an infinite star with central vertex $v$.  In either case, let $P^v_U:=P_{U_v}$ be the standard product region in $G_v$ associated to $U_v$ in the HHG $(G_v,\mathfrak S_v)$.

    Note that $\stabilizer_G(U)=\stabilizer_{G_v}(U_v)$.  We claim that $P_U$ is contained in a regular neighbourhood in $G$ of $P^v_{U}$.  Since $\stabilizer_{G_v}(U_v)$ acts coboundedly on $P_{U}^v$ by hypothesis, it will follow from this that $\stabilizer_G(U)$ acts coboundedly on $P_U$.  For each $u\neq v$ in $T_U$, the vertices $u,v$ are adjacent and the representative $U_u$ of $U$ in $\mathfrak S_u'$ is $\nest$--minimal and not orthogonal to any element of $\mathfrak S_u'$, so $P_{U}^u\subset G_u$ is a quasiline at uniformly bounded Hausdorff distance from the edge space in which $G_v,G_u$ coarsely intersect.  On the $G_v$ side, this quasiline is contained in $P_{U}^v$, so we are done.  If no such $u$ exists, then either $P^v_U=P_U$, or $P_U$ consists of $P^v_U$ together with some quasilines corresponding to $\nest$--minimal elements of $\mathfrak S_v'$ that are nested in $U_v$ and whose support trees are stars centred at $v$.  In the latter case, we again conclude that $P_U$ intersects the adjacent vertex spaces in uniform neighbourhoods of edge spaces, so we are done.
    
        \item The final possibility is that $U=K^\orth_m([U_1],\ldots,[U_m])$, where the notation is as in \cite[Defn. 8.11]{HHSII}. From that definition, we obtain a set $[U_1],\ldots,[U_m]\in\mathfrak S$ such that $[U_i]\orth [U_j]$ whenever $i\neq j$, and each $[U_i]$ is an equivalence class in $\bigcup_v\mathfrak S_v$.  As noted in \cite[Defn. 8.11]{HHSII}, the $\sim$--class $[V]$ is nested in $U$ if and only if $[V]\orth[U_i]$ for all $i$, and $[V]\orth U$ if and only if $[V]\nest [U_i]$ for some $i$.  By the proof of \cite[Lem. 8.13]{HHSII}, no other element of $\mathfrak S$ is orthogonal to $U$. It is possible that there are $U'=K^\orth_{m'}([U_1'],\ldots,[U_{m'}'])$ with $U'\nest U$, but by \cite[Defn. 8.14]{HHSII}, for such $U'$, the space $\mathcal CU'$ is a single point.  Hence Lemma \ref{lem:cocompact-product}.\eqref{item:product-coarse-intersect} implies that $P_U$ uniformly coarsely coincides with $\bigcap_i\neb_r(P_{[U_i]})$ for some uniform $r$.  Lemma \ref{lem:cocompact-product}.\eqref{item:product-orthogonal-stabiliser} and item \eqref{item:cobounded-equiv-class} (just above) then imply that $\bigcap_i\stabilizer_G([U_i])\leq \stabilizer_G(U)$ acts coboundedly on $P_U$.
\end{enumerate}
We have now verified all parts of the statement of the lemma.
\end{proof}

The last part of the following lemma was observed without proof in \cite{HHSII}, and the other parts are routine HHS facts that we recall here for completeness.  We emphasise the standing assumption in this paper that HHS/G structures have coarsely surjective $\pi_U$ maps (see Definition \ref{defn:HHS}).

\begin{lem}\label{lem:cocompact-product}
Let $(G,\mathfrak F)$ be an HHG.  There exists a constant $R$ such that the following statements hold for any $\mathcal U\subseteq \mathfrak F$ such that $U\orth V$ for all distinct $U,V\in\mathcal U$: 

\begin{enumerate}[(i)]
    \item For any $r\geq R$, the intersection $P=\bigcap_{U\in\mathcal U}\neb_r(P_U)$ is hierarchically quasiconvex with parameters depending only on $r$ and the HHS structure, and for all $W\in\mathfrak F$, either $\pi_W(P)$ has diameter bounded in terms of $r$ and the HHS structure, or $\pi_W(P)\asymp \mathcal CV$ (uniformly). The latter holds if and only if either $W\nest U$ for some $U\in\mathcal U$, or $W\orth U$ for all $U\in\mathcal U$. \label{item:product-coarse-intersect}
    \item Suppose that $\stabilizer_G(V)$ acts on $P_V$ coboundedly for all $V\in\mathcal U$.  Then $\bigcap_{U\in\mathcal U}\stabilizer_G(U)$ acts on $P$ coboundedly.\label{item:product-orthogonal-stabiliser}

    \item  Suppose that $\stabilizer_G(V)$ acts on $P_V$ coboundedly for all $V\in\mathfrak F$. Then $\mathcal U$ belongs to one of finitely many $G$--orbits in $2^{\mathfrak F}$.\label{item:product-finitely-many-orthogonal-sets}
\end{enumerate}
\end{lem}

\begin{proof}
We work with a fixed word metric on $G$ associated to a finite generating set, so $(G,\mathfrak F)$ is a hierarchically hyperbolic space.

By \cite[Lem. 2.1]{HHSII}, $|\mathcal U|\leq \chi$, where $\chi\in\naturals$ depends only on the HHS structure.  Moreover, by the partial realization axiom \cite[Defn. 1.1]{HHSII}, we have $d_G(P_U,P_V)\leq s$ for all $U,V\in\mathcal U$, where $s<\infty$ depends only on the HHS structure.  So by, for instance, \cite[Thm. 3.5]{haettel_coarse_2020}, there exists $x\in G$ such that $d_G(P_U,x)\leq s'$ for all $U\in\mathcal U$, where $s'=s'(s,\chi)$ ultimately depends only on the HHS structure; in particular it is independent of $\mathcal U$.

Item \eqref{item:product-coarse-intersect} is then, for instance, an inductive application of \cite[Lem. 4.10, Lem. 1.20]{HHS:quasiflats}.

Next, we prove \eqref{item:product-orthogonal-stabiliser}.  By assumption, for any domain $V\in\mathcal U$, the subgroup $\stabilizer_G(V)$ acts uniformly coboundedly on $P_V$ (uniformity comes from the finite number of orbits in $\mathfrak F$), so there is a function $m:\reals\to\reals$ such that for all $r>0$, any $r$--ball in $G$ intersects  at most $m(r)$ standard product regions associated to elements of $G\cdot\mathcal U$.

To show that $\bigcap_{U\in\mathcal U}\stabilizer_G(U)$ acts on $P$ coboundedly, fix $U$ and let $Q=\bigcap_{V\in\mathcal U-\{U\}}\neb_r(P_V)$.  By induction, $H:=\bigcap_{V\in\mathcal U-\{U\}}\stabilizer_G(P_V)$ acts on $Q$ coboundedly (in the base case, $Q$ is a neighbourhood of one standard product region, and the claim holds by assumption).  If $B$ is a ball of arbitrary radius $r'$ centred at $y\in Q$, then for any $h\in H$ such that $hP$ intersects $B$, the product region $hP_U$ intersects $\neb_r(B)$ and thus there are at most $m(r)<\infty$ possibilities for $hP_U$, and thus finitely many possibilities for $h\neb_r(P_U)\cap Q=hP$.  It follows from the standard ``pigeonhole principle for cobounded actions'' (see e.g. \cite[Lem. 2.3]{HagenSusse}, or the related \cite[Prop. 7.2]{HruskaRuane}) that $\stabilizer_G(P)\cap H$ acts on $P$ coboundedly.  Now, $\stabilizer_G(P)\cap H$ permutes the translates of $\neb_r(P_U)$ that contain $P$, and there are only finitely many of these, so $\stabilizer_G(U)\cap H$ has finite index in $\stabilizer_G(P)\cap H$ and therefore $\stabilizer_G(U)\cap H=\bigcap_{V\in\mathcal U}\stabilizer_G(V)$ acts on $P$ coboundedly, proving \eqref{item:product-orthogonal-stabiliser}.

Finally, we prove item \eqref{item:product-finitely-many-orthogonal-sets}.  Here we have the stronger assumption that $\stabilizer_G(P_V)$ acts on $P_V$ coboundedly for all $V\in\mathfrak F$ (not just $V$ in the specific set $\mathcal U$).  As above, we obtain a function $m'$ such that any $r$--ball intersects at most $m'$ product regions (associated to any domains, not just domains in $\mathcal U$). Let $\mathcal W$ be the set of $W\in \mathfrak F$ such that $P_W$ intersects the $s'$--ball in $G$ about $1$.  We have shown that $\mathcal U\subset x\mathcal W$, so $\mathcal U$ belongs to the same orbit as some subset of $\mathcal W$.  But there are only $2^{|\mathcal W|}\leq 2^{m'(s')}$ such subsets and hence finitely many possibilities for the orbit $G\cdot\mathcal U$.
\end{proof}

\begin{prop}\label{prop:polynomial-growth-is-virtually-acceptable}
Suppose that $\maptor=\F\rtimes_\phi\integers$, where $\phi$ has polynomial growth rate $d\geq 2$ and $\maptor$ is unbranched.  Then, up to replacing $\maptor$ with a finite-index subgroup, $\maptor$ splits as a finite graph of groups that is acceptable, and hence is virtually a colorable HHG.
\end{prop}

\begin{proof}
We will show that the cyclic hierarchy (Proposition \ref{prop:cyclic-hierarchy}) gives us the required acceptable graph of groups decomposition as in Definition \ref{defn:acceptable-cyclic-HHG-graph}. Using \cite[Corollary 5.7.6]{BFH-00} we may replace $\phi$ with some power if necessary and assume that $\phi$ is UPG.

Item  \eqref{item:edge-cyclic} from Definition \ref{defn:acceptable-cyclic-HHG-graph} is a direct consequence of the topmost edges decomposition as mentioned in the paragraph before Proposition \ref{prop:cyclic-hierarchy}. 

If $e$ is an edge in this graph of group decomposition, then $\maptor_e = \langle t_e \rangle$ is an infinite cyclic group and $\maptor_{e+} = F_{e^+} \rtimes \langle t_{e^+} \rangle$, where the automorphism corresponding to $t_{e^+}$ has at least linear growth on $F_{e^+}$ (the notation is as in Remark \ref{rem:cyclic-hierarchy-suffix}).  A similar description holds for $\maptor_{e^-}$. In the positive direction, $t_e$ is identified with $wt_{e^+}$, for some $w\in F_{e^+}$.  Hence the action of $\iota^+(t_e)$ on the Bass-Serre tree of $\maptor_{e^+}$ is loxodromic, satisfying items \eqref{item:edge-tree-hyperbolic}, \eqref{item:edge-maximal} of the definition.  (See again Remark \ref{rem:cyclic-hierarchy-suffix}.)

In the opposite direction, the identification happens as $t_e \mapsto t_{e^-}$. To see why this is HQC in $\maptor_{e^-}$ (and why the latter is  an HHG, as required by item \eqref{item:vertx-hhg}), we need to use an inductive argument by first establishing items \eqref{item:vertx-hhg} and \eqref{item:edge-hqc} for the linear growth case, and this is where we will use the \emph{unbranched blocks} hypothesis.

By Lemma \ref{lem:unbranched-blocks-linear}, if $G_{e^-}$ is the mapping torus of an automorphism of polynomial growth rate $0$ or $1$, then it is an HHG and the element $t_{e^-}$ is bottom-level very HQC.

We can apply Lemma \ref{lem:combination-theorem} to conclude the same for $d\geq 2$ inductively once we verify \eqref{item:stabiliser-cobounded} below. This then completes verification of item \eqref{item:edge-hqc} of Definition \ref{defn:acceptable-cyclic-HHG-graph}. 

To check item \eqref{item:stabiliser-cobounded}, we have to identify the standard product regions.  In the linear case, this is again Lemma \ref{lem:unbranched-blocks-linear}.  Lemma \ref{lem:combination-theorem} verifies \eqref{item:stabiliser-cobounded} in the inductive step, which completes the proof. This completes verification of item \eqref{item:stabiliser-cobounded}, and we are done. 
\end{proof}

\begin{lem}\label{lem:unbranched-blocks-linear}
If $\phi$ has polynomial growth of degree at most $1$, and  $\maptor=\F\rtimes_\phi\integers$  is unbranched, then $\maptor$ is virtually a colorable hierarchically hyperbolic group.  Hence it is quasi-isometric to a finite-dimensional CAT(0) cube complex and is in particular coarse median of finite rank.  

Moreover, if $\phi$ is a linear UPG element, there is an HHG structure $(\maptor,\mathfrak S)$ with the following properties for the graph of groups decomposition from Proposition \ref{prop:split-over-tori}:
\begin{itemize}
    \item for each $v\in\vertices(\Delta)$, the element $t_v$ is bottom-level very HQC in $(\maptor,\mathfrak S)$, and
    \item for each $U\in\mathfrak S$, the subgroup $\stabilizer_\maptor(U)$ acts on  $P_U$ coboundedly.
\end{itemize}
\end{lem}

\begin{proof}
Replacing $\maptor$ by a finite-index subgroup, we assume $\phi$ is a UPG automorphism \cite{BFH:kolchin}.  Hence there is a splitting as in Proposition \ref{prop:split-over-tori}, and the assumption that $\maptor$ is unbranched enables one to apply Lemma \ref{lem:linear-unbranched} (along with Proposition \ref{prop:split-over-tori}) to see that this graph of groups is an  \emph{admissible graph of groups} as defined in \cite{CrokeKleiner} (see also \cite[Defn. 2.13]{HRSS}).  The verification of admissibility is exactly the same as in the proof of \cite[Prop. 5.4]{MunroPetyt}: the graph of groups used in \emph{loc. cit.} is the splitting from Proposition \ref{prop:split-over-tori}, except \cite{MunroPetyt} ignores the black vertices (which is fine, since they have valence $2$ and black vertex groups coincide with the incident edge groups by Proposition \ref{prop:split-over-tori}.\eqref{item:linear-bipartite}).  The key point in the definition of admissibility (distinct edges incident to a common white vertex correspond to incommensurable subgroups in the vertex group) is exactly Proposition \ref{prop:split-over-tori}.\eqref{item:linear-cylic-intersection}.  The colorable HHG structure comes from admissibility and \cite[Thm. 4]{HRSS}.  As noted in \cite{MunroPetyt}, this gives a quasi-isometry to a finite-dimensional CAT(0) cube complex, by \cite{Pet}.\footnote{This argument --- make an admissible splitting and then use \cite{HRSS} --- differs from the proof of  Proposition 5.4 of \cite{MunroPetyt} only in the hypotheses: Munro-Petyt's assumption is not that $\maptor$ is unbranched, but that $\phi$ is represented by a suitable relative train track map where each Nielsen cycle supports $\leq 1$ linear strata.} 

The preceding argument works if $\phi$ has growth $0$, i.e. the splitting from Proposition \ref{prop:split-over-tori} is trivial, i.e. $\Delta$ is a single vertex (black or white according to the rank of $\F$).  

\textbf{HQC elements.}  We now check the bottom-level very HQC assertion by examining the HHG structure from \cite{HRSS}.  This HHG structure comes from using Lemma 4.2 in \cite{HRSS},  which chooses appropriate infinite generating sets for the vertex groups  $F' \times \langle t_v \rangle$ in the Proposition \ref{prop:split-over-tori} decomposition to make them quasi-isometric to a line, and this quasiline is a $\nest$--minimal domain in $\mathfrak S$. The center of a vertex group is $\langle t_v\rangle$, which, by \cite[Lemma 4.2]{HRSS}, acts with unbounded orbits on the quasiline and has bounded projection to every other domain.  This shows that $\langle t_v \rangle$ is HQC in the linearly growing case and the bigset has a unique element, so $t_v$ is bottom-level very HQC.

\textbf{Product regions.} Finally, we prove the statement about standard product regions by inspecting the various numbered \emph{types} of domains from Figure 3 of \cite{HRSS}. \emph{Type (3)} corresponds to the standard product regions given by edge groups of type $\integers \times \integers$ and their translates. \emph{Type (7)} corresponds to the standard product region given by vertex groups of type $F'\times \integers$ and their translates. In either case, the standard product regions are translates of Cayley graphs of finitely generated subgroups, so the action is always cobounded. The other types which appear in \cite[Figure 3]{HRSS}, are orthogonal to one of these two types of domains, and do not create additional product regions.
\end{proof}

\subsection{From virtual hierarchical hyperbolicity to hierarchical hyperbolicity}\label{subsec:remove-virtual}
We now analyze the preceding construction in order to pass from a virtual HHG structure to an HHG structure in the unbranched case.  In the linear case, the main point is that the virtual splitting from Proposition \ref{prop:split-over-tori} is canonical enough that the virtual HHG structure constructed above admits an action by the finite-index supergroup, and hence gives an HHG structure there.  In the superlinear case, we use results from \cite{BFH:kolchin} in a similar way.

\begin{prop}\label{prop:remove-virtual}
If $\maptor$ is unbranched, then it is a colorable HHG.
\end{prop}

\begin{proof}
We first consider the case where $\phi$ has polynomial growth. Fix $k>0$ such that $\phi^k$ is a UPG element (necessarily also of growth rate $d$) and let $$\maptor'=\langle F,t^k\rangle_\maptor$$
be the corresponding finite-index subgroup of $\maptor$, so that $\maptor'\cong F\rtimes_{\phi^k}\integers$.  Since $\maptor$ is unbranched, $\maptor'$ is also unbranched, by Lemma \ref{lem:unbranched-index}.  Hence Lemma \ref{lem:unbranched-blocks-linear} or Proposition \ref{prop:polynomial-growth-is-virtually-acceptable} give a colorable hierarchically hyperbolic structure $(\maptor',\mathfrak F)$.  Indeed, in the proofs of those statements, the only passage to a finite-index subgroup came from taking a power of $\phi$. Note that $\maptor'$ is normal in $\maptor$ and $\{1,t,\ldots,t^{k-1}\}$ is a complete set of coset representatives for $\maptor'$ in $\maptor$.

\textbf{Finite order case.}  If $d=0$, then $\phi$ has finite order.  Hence Nielsen realisation provides a left-invariant proper metric on $F$ so that $\phi$ is an isometry.  Hence $\maptor$ acts by isometries on $F\times \reals$, with $t$ acting hyperbolically on $\reals$ and with bounded orbits on $F$, and $F$ acts trivially on $\reals$.  This yields an HHG structure by, for instance, \cite[Prop. 8.27]{HHSII}.

\textbf{Linear case.}  Suppose that $d\leq 1$.  Consider the (bipartite) graph of groups decomposition of $\maptor'$ given by Proposition \ref{prop:split-over-tori}.  We adopt the notation from that proposition; in particular, let $T$ be the Bass-Serre tree.  

By \cite[Prop. 5.2.2]{AndrewMartino:splitting} and \cite[Prop. 3.1.4]{AndrewMartino:splitting}, the $\maptor'$--action on $T$ extends to a $\maptor$--action on $T$.  This action preserves the decomposition into black and white vertices.  Thus $\maptor$ itself admits a splitting as a finite graph of groups satisfying the conclusion of Proposition \ref{prop:split-over-tori}, with the following change.  Each white vertex stabilizer has the form $\maptor_v=F_v\rtimes_{\phi_v}\langle t_v^i\rangle$ for some $i\in\{1,\ldots,k\}$, where $\phi_v$ is a finite-order automorphism whose order divides $k$.  So, $\langle t_v^k\rangle$ is central in $\maptor_v$, and $\maptor_v/\langle\langle t_v^k\rangle\rangle\cong F_v\rtimes C$, where $C$ is finite cyclic.  So $\maptor_v$ is a central extension of a nonelementary virtually free group.  Since $\maptor$ is unbranched, Lemma \ref{lem:unbranched-index} and Lemma \ref{lem:linear-unbranched} imply that black vertices have valence two, so for convenience we ``unsubdivide'' $T$ to remove the black vertices and regard them as midpoints of edges.

In order to make an HHG structure, we have to be slightly careful because \cite{HRSS} is written for graphs of groups that are \emph{admissible} in the sense of \cite[Defn. 2.13]{HRSS}, and the proper powers of $t_v$ above may complicate this.  We instead use a more general version of the \cite{HRSS} construction, due to Mangioni \cite{Mangioni:Short-1}. Specifically:

\begin{claim}\label{claim:blowup-materials}
The $\maptor$--action on $T$, and the finite-index subgroup $\maptor'$, define \emph{blowup materials} in the sense of \cite[Defn. 3.9]{Mangioni:Short-1}.
\end{claim}
\renewcommand{\qedsymbol}{$\blacksquare$}
\begin{proof}[Proof of Claim \ref{claim:blowup-materials}]
Since $T$ is a nontrivial tree, it satisfies the requirements of a \emph{support graph} in \cite[Defn. 3.9.(1)]{Mangioni:Short-1}: it is nontrivial, connected, is triangle-free and square-free, and $\maptor$ acts cocompactly since the graph of groups in Proposition \ref{prop:split-over-tori} is finite.  Since $T$ is a tree, the weak hyperbolicity condition from \cite[Defn. 3.9.(4)]{Mangioni:Short-1} holds. 

Next, as noted above, for each vertex $v\in T$, the stabilizer $\maptor_v$ fits into an exact sequence
$$1\to \langle t^{k}_v\rangle \to \maptor_v\to H_v\to 1,$$
where $H_v$ is commensurable with the nonelementary free group $F_v$. Moreover, $\langle t^{k}_v\rangle$ is contained in the centre of $\maptor_v$ and hence fixes each edge of $T$ incident to $v$.  This verifies \cite[Defn. 3.9.(2)]{Mangioni:Short-1}.

If $v,w\in T$ are adjacent vertices, joined by an edge $e$, then the edge-stabiliser $\maptor_e$ contains $\langle t_v^{k},t_w^{k}\rangle$ as a finite-index subgroup, by Proposition \ref{prop:split-over-tori}.  This verifies \cite[Defn. 3.9.(3)]{Mangioni:Short-1}.

For \cite[Defn. 3.9.(5),(6)]{Mangioni:Short-1}, we use the HHG structure on $\maptor'$ coming from its admissible graph of groups structure using the action on $T$ (see Lemma \ref{lem:linear-unbranched}) as follows.

For each $v$, the subgroup $\maptor'_v=\maptor'\cap \maptor_v$ has finite index, and is normal, in $\maptor_v$, and $t_v^{k}\leq \maptor'_v$.  Since the action of $\maptor'$ on $T$ splits it as an admissible graph of groups as in Lemma \ref{lem:unbranched-blocks-linear}, we again have that, by \cite[Lem. 4.2]{HRSS}, there is a homogeneous quasimorphism $\phi_v:\maptor'_v\to \reals$ such that $\phi_v(t_v^{k})\neq 0$ and $\phi_v(t_w^{k})=0$ whenever $w$ is adjacent to $v$.  (See also \cite[Sec. 2.3.3]{Mangioni:Short-1}.)  This verifies \cite[Defn. 3.9.(5)]{Mangioni:Short-1}. 

We finally check \cite[Defn. 3.9.(6)]{Mangioni:Short-1}.  Since we are assuming that $\maptor$ is unbranched, the same is true of $\maptor'$, by Lemma \ref{lem:unbranched-index}.  Hence, by Lemma \ref{lem:unbranched-blocks-linear}, $\maptor$ admits an HHG structure in which the edge groups are hierarchically quasiconvex, and, by the proof of the same lemma, each $\maptor_v$ is a standard product region and hence hierarchically quasiconvex.

Let $v_1,\ldots,v_k$ be a set of $\maptor$--orbit representatives of vertices in $T$.  Let $\iota:\maptor'\to\maptor$ be the inclusion.  Let $\mathfrak p:\maptor\to \maptor'$ be a quasi-inverse for $\iota$, which exists because of finiteness of the index.  For each $i$, let $\gate_i:\maptor'\to2^{\maptor'_{v_i}}$ be the coarse gate map provided by hierarchical quasiconvexity (see \cite[Defn. 5.4, Lem. 5.5]{HHSII}).  Let $j:2^{\maptor'}\to 2^{\maptor}$ be induced by $\iota$.  Then the map $\gate_i':=j\circ \gate'_i\circ \mathfrak p:\maptor\to 2^{\maptor}$ is a coarsely lipschitz coarse retraction, since $\gate_i'$ is. In \cite[Defn. 3.9.(6)]{Mangioni:Short-1}, there are two additional conditions on the coarse retraction, and these are satisfied by $\gate_i'$ for the following two reasons: first, in any hierarchically hyperbolic space, the gate map to a fixed standard product region sends other product regions to their coarse intersections with the fixed one (see e.g. \cite[Lem. 1.20, Lem. 1.27]{HHS:quasiflats}), and second, vertex spaces in $T$ corresponding to vertices at distance $2$ must coarsely intersect along a coset of the centre of the vertex-group lying between them in $T$. We have now checked all parts of \cite[Defn. 3.9.(6)]{Mangioni:Short-1}.
\end{proof}
\renewcommand{\qedsymbol}{$\Box$}

Theorem 3.10 of \cite{Mangioni:Short-1} and Claim \ref{claim:blowup-materials} imply that $\maptor$ is a hierarchically hyperbolic group.  (The aforementioned theorem strengthens the HHG result from \cite{HRSS}.)  Moreover, by the same theorem, there is a \emph{short} HHG structure $(\maptor,\mathfrak S)$, in the sense of \cite[Sec. 2.2]{Mangioni:Short-1}.  So \cite[Lemma 2.13]{Mangioni:Short-1} implies that stabilisers of standard product regions act coboundedly, and \cite[Axiom C]{Mangioni:Short-1} shows that each $\langle t_v\rangle$ is bottom-level very HQC.  Since $\maptor$ preserves the natural $2$--coloring of $\vertices(T)$, it follows from \cite[Rem. 3.16]{Mangioni:Short-1} that the HHG structure is colorable.

\textbf{Superlinear case.}  Suppose that $d\geq 2$.  Let $s=t^k$, so that $\maptor'=\langle F,s\rangle_\maptor$ is naturally isomorphic to the UPG mapping torus $F\rtimes_{\phi^k}\langle s\rangle$, and $\maptor/\maptor'=\{t^i\maptor':0\leq i\leq k-1\}$.

Apply Proposition \ref{prop:cyclic-hierarchy} (see also \cite[Lem 5.2]{KudlinskaValiunas}) to $\maptor'$ to obtain a cocompact, $2$--acylindrical action of $\maptor'$ on a tree $T$ such that $\maptor'\backslash T:=\Delta$ is the underlying graph of the cyclic splitting from Proposition \ref{prop:cyclic-hierarchy}.  For each vertex $v$ of $\Delta$, the vertex group $\maptor'_v:=\langle F_v,s_v\rangle_{\maptor'}$, where $F_v$ is a free factor of $F$ and $s_v=f_vt^k$ for some $f_v\in F$.  For each edge $e$, the edge group is $\maptor'_e=\langle s_e\rangle$ where $s_e=f_et^k$ for some $f_e\in F$.  Moreover, $\maptor'_v$ is naturally isomorphic to $F_v\rtimes_{\phi_v}\langle s_v\rangle$ where $\phi_v$ has polynomial growth rate at most $d-1$.  This includes the  growth--$0$ cases in Remark \ref{rem:cyclic-hierarchy-suffix}.

Exactly as in the proof of Proposition \ref{prop:polynomial-growth-is-virtually-acceptable}, the splitting of $\maptor'$ over $\Delta$ is an acceptable graph of groups, and hence there is an HHG structure $(\maptor',\mathfrak S)$ with the following properties:
\begin{enumerate}[(a)]
    \item For each $v\in \vertices(\Delta)$, there is an HHG structure $(\maptor'_v,\mathfrak S_v)$ in which stabilisers of product regions act coboundedly.
    \item For each edge $e$, the element $\iota^+_e(s_e)$ is loxodromic on the $\nest$--maximal hyperbolic space $\mathcal CS_{e^+}$ in $\mathfrak S_{e^+}$ and $\iota^-_e(s_e)$ is bottom-level very HQC in $(\maptor'_{e^-},\mathfrak S_{e^-})$.  
    \item More strongly, these HHG structures on vertex spaces make the graph of groups splitting of $\maptor'$ an acceptable graph of groups.
    \item The HHG structure $\mathfrak S$ is the one provided by Lemma \ref{lem:combination-theorem}.
\end{enumerate}

To extend the HHG structure over $\maptor$, we need to extend the action on $T$ over $\maptor$.  The following claim is known to experts, and the way to assemble it from results in the literature (specifically \cite{BFH:kolchin}), was kindly outlined for us by Naomi Andrew\footnote{Any inaccuracies are obviously the responsibilities of the present authors, though.}:

\begin{claim}\label{claim:naomi-monika-jp}
The action of $\maptor'$ on $T$ extends to a $2$--acylindrical action of $\maptor$ on $T$ such that:
\begin{itemize}
    \item For each $\tilde v\in\vertices(T)$, there exists $\tau_{\tilde v}\in Ft$ such that $s_{\tilde v}\in\langle \tau_{\tilde v}\rangle$ and $\maptor_{\tilde v}:=\stabilizer_{\maptor}(\tilde v)=\langle \maptor'_{\tilde v},\tau_{\tilde v}\rangle_{\maptor}$.
    \item For each edge $\tilde e$ of $T$, the stabiliser $\maptor_{\tilde e}:=\stabilizer_{\maptor}(\tilde e)$ is infinite cyclic, generated by some element $\tau_{\tilde e}$, and, if $e$ is the image of $\tilde e$ under the map $T\to \Delta$, then $s_e$ is conjugate in $\maptor$ to a positive power of $\tau_e$.
\end{itemize}
\end{claim}
\renewcommand{\qedsymbol}{$\blacksquare$}
\begin{proof}[Proof of Claim \ref{claim:naomi-monika-jp}]
Recall that $\phi^k\in\Out(F)$ is a UPG outer automorphism of polynomial growth rate $d\geq 2$.  We first recall the construction of the $\maptor'$--action on $T$, following \cite[Lem. 5.2]{KudlinskaValiunas}: let $f:B\to B$ be an improved relative train track representative for $\phi^k$.  Let $e_1,\ldots,e_m$ be the edges of $B$ with polynomial growth rate $d$ under iteration of $f$.  Let $B'$ be the subgraph of $B$ obtained by removing the interior of each edge $e_i$.  Then $\Delta$ is the graph obtained from $B$ by collapsing each component of $B'$ to a vertex.  So, $F=\pi_1B$ naturally splits as a graph of groups with underlying graph $\Delta$ and trivial edge groups.  As explained in \cite{KudlinskaValiunas}, $T$ is the Bass-Serre tree of this decomposition of $F$, and $f$ induces an automorphism $s:T\to T$, resulting in the $\maptor'$--action.

We now observe that applying \cite[Lem. 4.33]{BFH:kolchin} to the UPG automorphism $\mathcal O:=\phi^k$ and the free minimal $F$--tree $\widetilde B$ (the universal cover of $B$), the limiting tree $T\mathcal O^\infty$ produced by \cite{BFH:kolchin} is $F$--equivariantly isometric to $T$.  Indeed, from \cite[Defn. 4.27, Lem. 4.33]{BFH:kolchin}, we get that $T\mathcal O^\infty$ is the Bass-Serre tree of the splitting of $F$ with underlying graph obtained from $B$ by collapsing all edges except $e_1,\ldots,e_m$, exactly as in the construction of $T$.

We wish to show that the $T$--splitting of $F$ is actually $\phi$--invariant.  Let $\alpha:F\to\Isom(\widetilde B)$ be the action of $F$ by deck transformations (representing a point in the outer space of $F$) and consider the action $\alpha \Phi$, where $\Phi\in\Aut(F)$ represents the outer automorphism $\phi$ (for instance, $\Phi$ could be conjugation by $t$ in $\maptor$).  Let $X$ be the quotient of $\widetilde B$ by the action $\alpha\Phi$, and let $h:B\to X$ be the based homotopy equivalence that takes vertices to vertices and induces $\Phi$.  As in \cite[Defn. 4.27]{BFH:kolchin}, we then define, for any path $\sigma$ in $B$, the quantity:
$$L_B(\sigma)=\lim_{n\to\infty}\frac{1}{n^d}\|[h(f^n(\sigma))]\|_X,$$
where $[\bullet]$ denotes the tightening of a path in $X$ and $\|\bullet\|_X$ is path length in $X$.  As in \cite[Sec. 4]{BFH:kolchin}, we obtain an $F$--action on a limiting tree $T'$ provided by applying \cite[Lem. 4.33]{BFH:kolchin} with inputs $f:B\to B$ and $L_B$ defined as above using $X$.  We claim $L_B(e)=0$ for exactly those edges $e$ of $B$ that are not among the edges $e_1,\ldots,e_m$.  Indeed, let $e$ be an edge of $B$.  Then $f(e)=eu$, where $u$ is the suffix from Theorem \ref{thm:RTT}.  From the splitting
$$[f^n(e)]=eu[f(u)][f^2(u)]\cdots[f^{n-1}(u)]$$
and \cite[Prop. 4.21]{BFH:kolchin}, we deduce that $\|[h(f^n(e))]\|_X$ grows like $n^{d_e}$, where $d_e$ is the polynomial growth rate of $e$ under iteration of $f$.  This implies that $L_B(e)>0$ if and only if $e=e_i$ for some $i$.  Hence, by \cite[Lem. 4.33]{BFH:kolchin}, applied to $f:B\to B$ and the function $L_B$ defined using $X$, we see that $T$ was independent of the choice of $F$--action on $\widetilde B$.  Using \cite[Thm. 3.7]{CullerMorgan} then shows that the original action of $F$ on $T$, and the action twisted by $\Phi$, differ by an equivariant isometry $T\to T$.  This shows that the action of $\maptor'$ on $T$ extends to an action of $\maptor$; the descriptions of vertex and edge stabilisers in $\maptor$ follow from the corresponding descriptions for $\maptor'$ and finiteness of the index, as does $2$--acylindricity.   
\end{proof}
\renewcommand{\qedsymbol}{$\Box$}

A vertex $\tilde v$ of $T$ is \emph{cyclic} if $\maptor_{\tilde v}=\langle \tau_{\tilde v}\rangle$.  Cyclic vertices are lifts of vertices in $\Delta$ all of whose incident edges are outgoing.  Cyclic vertices can arise since $f(e)$ has a suffix but not a prefix for each growing edge $e$. This prevents $\maptor$ from inverting edges in $T$.  More precisely:

\begin{claim}\label{claim:no-inversions}
The action of $\maptor$ on $T$ is without inversions.
\end{claim}
\renewcommand{\qedsymbol}{$\blacksquare$}
\begin{proof}
Consider an edge $\tilde e$ of $T$.   Suppose that the action of $\maptor$ inverts $\tilde e$, so  $\tau_{\tilde e}(\tilde e^{\pm})=\tilde e^{\mp}$.

Let $c:\maptor\to\maptor$ be the inner automorphism given by conjugation by $\tau_{\tilde e}$.  Then $c(\maptor_{\tilde e^\pm})=\maptor_{\tilde e^\mp}$.  Now, let $t_{\tilde e^\pm}$ be the image of $\tau_{\tilde e}^2$ in $\maptor_{\tilde e^\pm}$.  Then $c(t_{\tilde e^\pm})=t_{\tilde e^\mp}$.  

Since $c$ preserves the finite-index subgroup $\maptor'$, it thus interchanges $\stabilizer_{\maptor'}(\tilde e^{\pm})$, taking the image $\langle s_{\tilde e^-}\rangle$ of $\langle s_{\tilde e}\rangle$ to the image $\langle s_{\tilde e^+}\rangle$ in $\stabilizer_{\maptor'}(\tilde e^+)$ of the same edge group. 

Let $T_{\tilde e^\pm}$ be the Bass-Serre tree of either the topmost edges decomposition of $\stabilizer_{\maptor'}(\tilde e^\pm)$ (in the case where $\phi_{\tilde e^\pm}$ is superlinear) or the decomposition from Proposition \ref{prop:split-over-tori} (in the linear case).  By Remark \ref{rem:cyclic-hierarchy-suffix}, $s_{\tilde e^+}$ acts hyperbolically on $T_{\tilde e^+}$.  By the construction of the topmost edges decomposition using IRTTs (so, edges are sent to paths with suffixes but not prefixes), $s_{\tilde e^-}$ is elliptic on $T_{\tilde e^-}$.  Now, $\tau_{\tilde e}$ conjugates the action of $\stabilizer_{\maptor'}(\tilde e^+)$ on $T_{\tilde e^+}$ to an action of $\stabilizer_{\maptor'}(\tilde e^-)$ on $T_{\tilde e^+}$ making $s_{\tilde e^-}$ hyperbolic. But the proof of Claim \ref{claim:naomi-monika-jp} shows that $\tau_{\tilde e}$ preserves the topmost edges splitting $T_{\tilde e^-}$, where $s_{\tilde e^-}$ is elliptic.  This is a contradiction.  
\end{proof}
\renewcommand{\qedsymbol}{$\Box$}

Before continuing the proof, here are two brief digressions about the preceding claim that we feel may be of interest.  First, one can also probably prove the previous claim using the fact that $c:\maptor'_{\tilde e^\pm}\to\maptor'_{\tilde e^\mp}$ is a quasi-isometry taking a quasi-axis for $t_{\tilde e^\mp}$ to a quasi-axis for $t_{\tilde e^\pm}$, and reach a contradiction since those two quasi-axes have polynomial divergence functions (in the vertex groups) of different growth rates, using arguments from \cite{Macura:detour}.  

Also, in the preceding argument, the presence of cyclic vertices implicitly played a role, via the fact that $s_{\tilde e^-}$ is elliptic on $T_{\tilde e^-}$.  If our relative train tracks were allowed to have prefixes and suffixes, i.e. $f(e)=veu$, then we could indeed have an inversion, and we would need to subdivide $T$.  So the preceding claim can be taken as saying that the fact that $f(e)=eu$ means that the subdivision has already happened, and manifests in cyclic vertices.    

We now return to the proof of Proposition \ref{prop:remove-virtual}.

We work with the $\maptor$--action on $T$ from Claim \ref{claim:naomi-monika-jp}. So, we have decomposed $\maptor$ as a finite graph $\maptor\backslash T:=\Delta_1$ of groups.  We now verify the conditions from Definition \ref{defn:acceptable-cyclic-HHG-graph} for $\Delta_1$ and $\maptor$.

First, for each $v\in\vertices(\Delta_1)$, let $\tilde v$ be a lift of $v$ to $T$ so that the vertex group $\maptor_v=\maptor_{\tilde v}$ and let $\mathfrak S_v=\mathfrak S_{\tilde v}$.  By induction on the polynomial growth rate, we can assume that the HHG structure $(\maptor_v',\mathfrak S_v)$ above was obtained by restricting to $\maptor_v'$ a colorable HHG structure $(\maptor_v,\mathfrak S_v)$ on the entire vertex group $\maptor_v$. In the base case, where $\maptor_v$ is a linear mapping torus, this holds because we showed above that there is an HHG structure $(\maptor_v,\mathfrak S_v)$ with the required properties.  This verifies Item \eqref{item:vertx-hhg} from Definition \ref{defn:acceptable-cyclic-HHG-graph}.  

Second, the edge groups in the splitting of $\maptor$ over $\Delta_1$ are conjugates of the various $\maptor_{\tilde e}=\langle \tau_{\tilde e}\rangle$, verifying Definition \ref{defn:acceptable-cyclic-HHG-graph}.\eqref{item:edge-cyclic}.  

Third, let $e\in\edges(\Delta_1)$.  Let $\tilde e$ be the lift of $e$ to $T$ with $\maptor_e=\maptor_{\tilde e}$.  Let $\tilde e^\pm$ be the endpoints of $\tilde e$, so $\maptor_e$ is generated by $\tau_e$, and the image of $\tau_e$ in $\maptor_{\tilde e^+}$ has $s_{\tilde e}$ as a positive power, and the above properties of $(\maptor'_{\tilde e^+},\mathfrak S_{\tilde e^+})$ therefore ensure that the image of $\tau_e$ is loxodromic on $\mathcal CS_{\tilde e^+}$, and its image in $\maptor_{\tilde e^-}$ is bottom-level very HQC since this is true of the image of $s_{\tilde e}$, which is a power of $\tau_{\tilde e}$.  This verifies Definition \ref{defn:acceptable-cyclic-HHG-graph}.\eqref{item:edge-tree-hyperbolic},\eqref{item:edge-hqc}. 

For each $v$ and each standard product region $P_U$ in $(\maptor_v,\mathfrak S_v)$, our choice of HHG structure $(\maptor_v',\mathfrak S_v)$ implies that $\stabilizer_{\maptor'_v}(U)$ already acts on $P_U$ coboundedly, so the finite-index overgroup $\stabilizer_{\maptor_v}(U)$ does as well.  This gives item \eqref{item:stabiliser-cobounded}.

We now verify item \eqref{item:edge-maximal}, which says that for all edges $e$ of $T$, the image $\tau_{e^+}$ of $\tau_e$ in $\stabilizer_\maptor(e^+)$ is a maximal cyclic subgroup of $\stabilizer_\maptor(e^+)$.  The edge $e$ is the image of an edge $\tilde e$ of $T$ that is stabilised by $\tau_e$.  Note that $s_e\in\langle \tau_e\rangle$.  The quotient $\maptor\to\integers$ with kernel $F$ sends $t$ to $1$, and $s_e$ to $k$, and $\tau_e$ to some positive divisor of $k$.  

We aim to show that $\tau_e^+$ generates a maximal cyclic subgroup of $\stabilizer_\maptor(\tilde e^+)$.  Let $p\in\stabilizer_\maptor(\tilde e^+)$ generate the elementary closure in $\stabilizer_\maptor(\tilde e^+)$ of $s_{\tilde e^+}$, which is virtually cyclic since $s_{\tilde e^+}$ is loxodromic on $\mathcal CS_{\tilde e^+}$, and hence cyclic since $\maptor$ is torsion-free.  Note that $\langle p\rangle$ contains $\langle s_{\tilde e^+}\rangle$ as a finite-index subgroup.  If $p\tilde e=\tilde e$, then item \eqref{item:edge-maximal} holds, so suppose not.  Then $\tilde e,p\tilde e$ are distinct edges of $T$ that both terminate at $\tilde e^+$.  Now, $s_{\tilde e^+}=p^m$ for some $m\neq 0$, so $s_{\tilde e^+}=ps_{\tilde e^+}p^{-1}$ stabilises both $\tilde e$ and $p\tilde e$, contradicting, for example, the proof of \cite[Lem. 5.2.(iii)]{KudlinskaValiunas}.
%
%
%
%
%
To conclude that the splitting $\Delta_1$ of $\maptor$ is an acceptable graph of groups decomposition, it remains to recall that the action of $\maptor$ on $T$ is $2$--acylindrical by Claim \ref{claim:naomi-monika-jp}.

The proof of Proposition \ref{prop:polynomial-growth-is-virtually-acceptable} now goes through verbatim: we apply Lemma \ref{lem:combination-theorem} to obtain a colorable HHG structure on $\maptor$ satisfying the properties needed for the induction (and restricting on $\maptor'$ to the HHG structure one would obtain by applying Lemma \ref{lem:combination-theorem} to the original acceptable graph of groups decomposition of $\maptor'$).

\textbf{Exponential case.}  Apply Propositions \ref{prop:relhyper} and \ref{prop:relhyp-unbranched}, and \cite[Thm. 9.1]{HHSII}.
\end{proof}

\section{Unbranching and excessive linearity}\label{subsec:excessive-linearity}
This section is only used in the proof of Theorem \ref{exceptCor}, and is the place where we work with CT maps.  The proof of Theorem \ref{thm:main} is independent of this subsection.

\begin{defn}\label{defn:fixed-subgroup}
Let $f:\GG\to\GG$ be a CT map fixing all vertices of $\GG$.  For each vertex $x\in\GG$, regard $f:(\GG,x)\to(\GG,x)$ and let $f_*\in \Aut(\pi_1(\GG,x))$ be the induced automorphism of the fundamental group.  Let $\Fix_x(f)\leq \pi_1(\GG,x)$ be the subgroup consisting of elements fixed by $f_*$.  Note that $g\in\pi_1(\GG,x)$ belongs to $\Fix_x(f)$ if and only if $g$ is the path-homotopy class of a closed Nielsen path based at $x$.
\end{defn}

\begin{defn}\label{defn:ct-cycle-vertex}
Let $f:\GG\to\GG$ be a CT map fixing all vertices of $\GG$.  For each Nielsen cycle $u$, let $E(u)$ be the set of linear strata supported on $u$, and let $T(u)$ be the set of vertices $x\in\GG$ such that $u$ passes through $x$ and $\rank(\Fix_x(f))>1$.
\end{defn}

\begin{defn}[Excessive linearity]\label{defn:excessive-linearity}
Let $f:\GG\to\GG$ be a CT map representing a UPG outer automorphism of at most linear growth.  We say that $f$ has \emph{excessive linearity} if $|E(u)\cup T(u)|\geq 3$ for some Nielsen cycle $u$ in $\GG$.
\end{defn}

\begin{remark}\label{rem:principal-lifts}
    A reader seasoned with the train-track theory would have perhaps noticed that what we are doing in Definition \ref{defn:ct-cycle-vertex} is essentially hunting for principals lifts of $\phi$ to $\Aut(\F)$, by choosing basepoints at vertices of $u$ which are principal. 
    
    To see why, use \cite[Fact 1.37]{HM-20} to conclude that under the condition that $\phi$ is UPG and every vertex of $\GG$ is fixed, all vertices of $\GG$ must be principal. Next we recall that a lift of a linearly growing $\phi$ to $\Aut(\F)$ is \emph{principal} if and only if the associated fixed subgroup has rank at least $2$ (see \cite[p. 40, last paragraph, and Remark 3.3]{FeighHandel:recognition}).  \\
    In fact, in the proof of the direction (2) $\implies$ (1) of Theorem \ref{lem:unbranched-implies-one-Nielsen}, the white vertices which are constructed are precisely the fixed subgroups of these principal lifts. 
\end{remark}

\begin{remark}\label{rem:excessive-nielsen-paths}
    In light of the above remark and looking back at Definition \ref{defn:excessive-linearity}, it is perhaps worth noticing that \emph{excessive linearity} is not about ``too many linear edges'' but rather about too many closed Nielsen paths glued at the vertices of linear edges and their suffixes. Equivalently, excessive linearity is about existence of too many principal lifts (giving large fixed subgroups) with basepoints chosen on the suffixes of linear edges.  Hence we chose to formulate the definition in terms of fixed subgroups, but still retained the phrase \emph{linearity} to stress that this property is something special to linearly growing outer automorphisms. 
\end{remark}
\begin{remark}\label{rem:rich-linearity}
Excessive linearity is closely related to the \emph{rich linearity} condition from \cite[Defn. 5.11]{MunroPetyt}, which, in the context of a linearly-growing CT map, reduces to the following: there is a Nielsen cycle $u$ supporting at least two linear strata, and if there are exactly two such strata, then there is a Nielsen path $p$ joining some vertex of $u$ to the initial vertex of a linear edge. Excessive linearity does not imply rich linearity, by Example \ref{exmp:not-rich}.
\end{remark}

\begin{exmp}[Excessive versus rich linearity]\label{exmp:not-rich}
Let $\GG$ be the following graph: there are three vertices, $x,y_1,y_2$.  There is a (directed) loop $A$ based at $x$, and loops $B_i$ based at $y_i$ for $i\in\{1,2\}$.  There is a directed edge $C_i$ from $y_i$ to $x$.  The map $f:\GG\to\GG$ fixes each vertex, fixes the edges $A,B_1,B_2$ pointwise, and sends $C_i$ to the path $C_iA^{p_i}$, where $p_1,p_2$ are distinct positive integers.  Let $\F=\pi_1\GG$ and let $\phi\in\Out(\F)$ be represented by $f$.  Then the mapping torus $\maptor$ of $\phi$ is unbranched.  One way to see this is to follow the construction from \cite{DahmaniTouikan} to compute the graph of groups $\Delta$ from Proposition \ref{prop:split-over-tori}.  In this case, $\Delta$ has a single black vertex (corresponding to the conjugacy class in $\F$ represented by $A$), two white vertices, and two edges.  So unbranching follows from Lemma \ref{lem:linear-unbranched}.  This $f$ does not have excessive (or rich) linearity.  See the the left side of Figure \ref{fig:excessive}.  The only Nielsen cycle supporting linear strata is $A$, and $E(A)=\{C_1,C_2\}$.  The only vertex traversed by $A$ is $x$.  To see that excessive linearity fails, we just have to check that $\Fix_x(f)$ is cyclic.  Suppose that $N$ is a closed Nielsen path based at $x$.  Either $N$ is path-homotopic to a power of $A$, or its splitting into fixed edges and indecomposable Nielsen paths contains some path $(C_iA^kC_i^{-1})^{\pm1}$.  This is impossible since $N$ starts and ends at $x$.

Now modify $\GG$ by adding a second loop $P$ at the vertex $x$, and extend  $f$ by requiring it to fix $P$.  Define $\phi$ as before, using the new $f$, and let $\maptor$ be the mapping torus.  This graph is contained in the picture at right in Figure \ref{fig:excessive}. 

First, observe that $f$ has excessive linearity.  Indeed, $E(A)$ has two elements, $C_1$ and $C_2$.  Meanwhile, $A$ and $P$ generate a nonabelian free subgroup of $\Fix_x(f)$, so $|T(x)|\geq 1$, so Definition \ref{defn:excessive-linearity} applies.

Second, $f$ does \emph{not} have rich linearity.  Indeed, the only Nielsen cycle supporting linear strata is $A$, and it supports exactly two, $C_1$ and $C_2$.  Let $\tau$ be a path that starts at $x$ and ends at the initial point of a linear edge; without loss of generality, $\tau$ ends at $y_1$.  Since Nielsen paths split as concatenations of fixed edges and paths of the form $C_iA^nC_i^{-1}$, each linear edge occurs an even number of times in a Nielsen path.  Since the endpoints of $\tau$ are on opposite sides of the separating edge $C_1$, it follows that $\tau$ is not a Nielsen path.  Hence $A$ does not have a \emph{nearby source} in the sense of \cite[Sec. 5]{MunroPetyt}, so $f$ does not have rich linearity.

Finally, since $f$ has excessive linearity, $\maptor$ is not unbranched, as we are about to show.  One can also see this by hand.  Indeed, the graph $\Delta$ from Proposition \ref{prop:split-over-tori} now has an additional white vertex, corresponding to the free subgroup generated by $A$ and $P$, incident to the (still unique) black vertex.  The $2$--complex in the picture at right in Figure \ref{fig:excessive} has fundamental group $\F$, and the union of the cylinders in the picture is a subcomplex corresponding to the cyclic subgroup associated to the black vertex of $\Delta$.  The three white vertices correspond to the three wedges of two circles based at the vertices in the complex.
\end{exmp}

\begin{figure}
\setlength{\unitlength}{0.01\linewidth}
\begin{picture}(80,40)
\put(0,8){\includegraphics[width=0.8\textwidth]{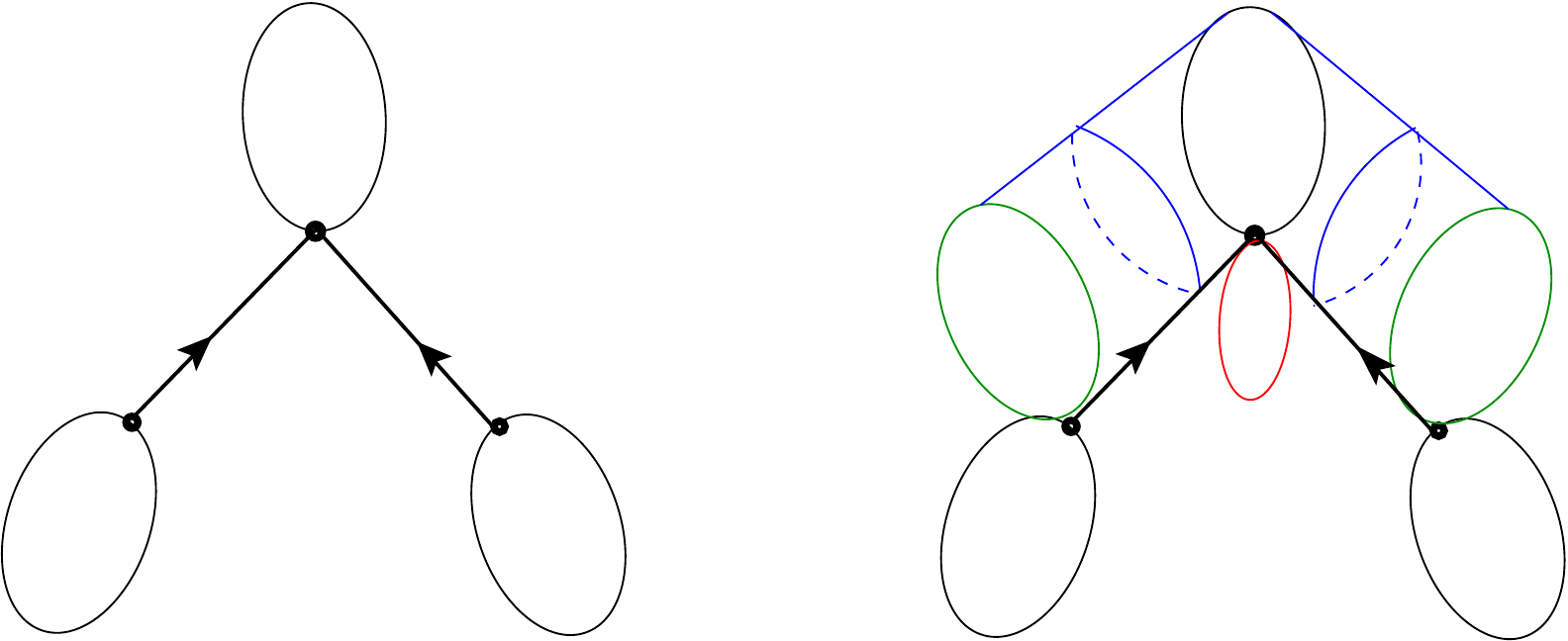}}
\put(15,31){$A$}
\put(15,25){$x$}
\put(62.5,31){$A$}
\put(7,24){$C_1$}
\put(8.5,18){$y_1$}
\put(21,18){$y_2$}
\put(23,24){$C_2$}
\put(4.5,6){$B_1$}
\put(25,6){$B_2$}
\put(62.5,16.5){\textcolor{red}{$P$}}
\end{picture}

\caption{Illustration of Example \ref{exmp:not-rich}. The example at left has unbranched mapping torus and does not have excessive, or rich, linearity.  The example at right has excessive linearity but does not have rich linearity.  In the picture at right, the graph $\GG$ consists of $A,B_1,B_2,C_1,C_2,P$, and we have also added the cylinders from the \cite{DahmaniTouikan} construction, which allows one to represent $\phi$ as a Dehn twist.  At the level of the CT maps, the only difference between the left and right pictures is the invariant loop $P$.}

\label{fig:excessive}
\end{figure}

The following theorem is the main technical tool towards proving Theorem \ref{exceptCor}.
\begin{lem:unbranched-implies-one-Nielsen}
Suppose $\phi\in\Out(\F)$ is a rotationless UPG automorphism of at most linear growth.  Then the following statements are equivalent: 
\begin{enumerate}
    \item $\maptor$ is not unbranched. 
    \item Any CT map $f:\mathbb G\to\mathbb G$ representing $\phi$ has excessive linearity.

\end{enumerate}
\end{lem:unbranched-implies-one-Nielsen}
Before the proof, we summarize the graph of groups construction of \cite{DahmaniTouikan} and the subsequent construction of cylinder complexes. 
This is the same graph of groups described in Proposition \ref{prop:split-over-tori}.  In the proof of Theorem \ref{thm:main}, we just use the statement of that proposition.  It is for Theorem \ref{lem:unbranched-implies-one-Nielsen}, and hence Theorem \ref{exceptCor}, that we need the following discussion.

\subsubsection{Summary of the \cite{DahmaniTouikan} construction.}\label{subsubsec:DT-construction-recall} Let $f:\GG\to\GG$ be a CT map representing $\phi$.  Let $\GG'_0$ be the subgraph consisting of all vertices together with all edges that are fixed by $f$.  For each linearly-growing edge $e_i$, let $u_i$ be the Nielsen cycle such that $f(e_i)=e_i\cdot u_i^{p_i}$ for some $p_i\in\mathbb Z-\{0\}$.
We view each linearly-growing edge $e_i$ as being oriented so that its terminal vertex is the initial vertex of $u_i$.  Sometimes, we refer to a linear edge $e$ with no subscript, in which case $u_e$ is the cycle, and $p_e$ is the integer, such that $f(e)=e \cdot u_e^{p_e}$.

We will first recall how the proof of \cite[Thm. 2.7]{DahmaniTouikan} produces, starting with the arbitrary CT map $f$, a graph of groups decomposition of $\F$ and hence of $\maptor$.  It is not yet the graph $\Delta$ of groups from Proposition \ref{prop:split-over-tori}, but rather the intermediate graph of groups that is called $\mathbb X$ in \cite{DahmaniTouikan}.  Our notation differs from \cite{DahmaniTouikan}, to be consistent with the rest of this paper.

\begin{defn}\label{defn:DT-S_k}
Let $S_1$ be the set of linearly growing edges $e_i$ such that $u_i$ is a path in $\GG'_0$, and let $\GG'_1$ be the subgraph consisting of $\GG'_0$ together with all edges in $S_1$. Inductively, for $k\ge 2$, let $S_k$ be the set of edges $e_i$ such that $u_i$ is a path in $\GG'_{k-1}$ that traverses at least one edge in $S_{k-1}$.  Let $\GG'_k$ be the union of $\GG'_{k-1}$ together with the edges in $S_k$.
\end{defn}

The vertex set $V$ of $\mathbb X$ has a vertex for each component of the fixed subgraph $\Fix(f)$ of $\GG$.  Vertices of $\GG$ are fixed by $f$, so each vertex of $\GG$ is contained in a subgraph corresponding to a vertex in $V$.  

\begin{defn}\label{defn:DT-initial-vertex-spaces}
For $\mathfrak v\in V$, let $K_{\mathfrak v}$ be the corresponding connected fixed subgraph of $\GG$.
\end{defn}

Vertices $\mathfrak v,\mathfrak w\in V$ are joined by a directed edge $(\mathfrak v,\mathfrak w)$ in $\mathbb X$ if there is a linear edge $e$ of $\GG$ whose initial vertex is in $K_{\mathfrak v}$ and whose terminal vertex is in $K_{\mathfrak w}$.  We denote the edge of $\mathbb X$ corresponding to the linear edge $e$ of $\mathbb G$ by $\hat e$.

\begin{defn}\label{defn:DT-X_v}
For each $\mathfrak v\in V$, the vertex space $X_{\mathfrak v}$ is the connected graph consisting of $K_{\mathfrak v}$, together with some additional edges that we call \emph{new loops}.  Specifically, for each linear edge $e$ whose initial vertex $v$ is in $K_{\mathfrak v}$, add a $1$--cycle $\mu_e$ starting and ending at $v$.
\end{defn}

For each edge $\hat e$ of $\mathbb X$, the edge space is a circle.  The total space $Z$ of the graph of spaces is obtained from $\bigsqcup_{\mathfrak v\in V}X_{\mathfrak v}$ as follows.  For each edge $\hat e=(\mathfrak v,\mathfrak w)$, we add a cylinder $S^1\times [-1,1]$.  The circle $S^1\times \{-1\}$ is mapped to $X_v$ by identifying it with the new loop $\mu_e$.  The circle $S^1\times\{1\}$ is glued using an immersed cycle $\hat u_e\to X_{\mathfrak w}$ that is chosen in the proof of \cite[Thm. 2.7]{DahmaniTouikan} to correspond to the Nielsen cycle $u_e\to \GG$ in the following sense.

First, note that the inclusions $K_{\mathfrak s}\to X_{\mathfrak s},\ \mathfrak s\in V$ induce an embedding $\GG\hookrightarrow Z$ sending each linear edge $e$ to a segment of the form $\{\theta\}\times[-1,1]$ in the cylinder $S^1\times [-1,1]$ that corresponds to $\hat e$.  Hence we view $\GG$ as a subspace of $Z$.  Dahmani-Touikan show that the original Nielsen cycle $u_e$ in $\GG$ is homotopic in $Z$ to a cycle $\hat u_e$ lying in the vertex space $X_{\mathfrak w}$ at which $\hat e$ terminates.

Their construction inducts on the stratification $\GG_1'\subset\GG_2'\subset\cdots\subset\GG_n'=\GG$: they first add cylinders for the $e\in S_1$, for which $u_e$ is already a cycle in $K_{\mathfrak w}$.  If $e\in S_2$, then $u_e$ splits as a concatenation of fixed edges and indecomposable Nielsen paths of the form $au_a^ma^{-1}$, where $a\in S_1$ (\cite{DahmaniTouikan}, Lemma 2.8).  The cylinders attached at the previous stage therefore allow them to homotope $u_e$ to a cycle in a vertex space.  They continue this procedure inductively.

This completes the description of $Z$.  Dahmani and Touikan explicitly construct a homotopy inverse for the inclusion $\GG\hookrightarrow Z$ by iteratively collapsing free faces coming from the new loops.  This deformation retraction of $Z$ onto $\GG$ simultaneously homotopes each $\hat u_e$ to $u_e$.

\subsubsection{Characterizing unbranching}  Next, Dahmani-Touikan  pass from the $\F$--action on the Bass-Serre tree $T$ of the above splitting to a tree of cylinders.  The tree of cylinders is the Bass-Serre tree $\widetilde \Delta$ of the splitting from Proposition \ref{prop:split-over-tori} (called $T_c$ in \cite{DahmaniTouikan}).
As noted in \cite{DahmaniTouikan}, the white vertices of $\Delta$ correspond to the $\mathfrak v\in V$ such that $\rank(\pi_1X_v)\geq 2$.  The black vertices correspond to \emph{(JSJ) cylinders} in $T$. For each Nielsen cycle $u_e$, there is a black vertex $b_e$, and every black vertex has the form $b_e$ for at least one $u_e$.  There is an associated space $Z(u_e)$, a \emph{cylinder complex}, defined as follows.

Let $\widetilde Z$ be the universal cover of $Z$.  Let $C_e\leq \F$ be a cyclic subgroup generated by an element of the conjugacy class represented by $u_e$.  There is a finite subtree $\Omega_e$ of $T$ (the $C_e$--JSJ cylinder) which is maximal with the property that $C_e$ fixes $\Omega_e$ pointwise.
Let $\widetilde Z(u_e)$ be the preimage of $\Omega_e$ under the projection $\widetilde Z\to T$.  Let $Z(u_e)=C_e\backslash \widetilde Z(u_e)$ be the quotient by the deck transformation action.  The inclusion $\widetilde Z(u_e)\to\widetilde Z$ descends to a map $Z(u_e)\to Z$ with the following properties.

\begin{itemize}
    \item $Z(u_e)$ is a finite tree of spaces whose vertex spaces are cycle graphs (circle divided into edges and vertices) with (possibly trivial) trees attached, and edge spaces are (topological) circles.  The underlying tree is $\Omega_e$ and its edges are oriented.

    \item There is a locally injective combinatorial map $Z(u_e)\to Z$ that sends the open cylinders corresponding to the edges of $\Omega_e$ homeomorphically to edge-cylinders in $Z$, preserving the orientation of the underlying edge.  It sends each vertex space in $Z(u_e)$ via an immersion to a vertex space of $Z$.  If $C$ is a cylinder in $Z(u_e)$ that is sent to the edge-cylinder in $Z$ corresponding to an edge $\hat a=(\mathfrak v,\mathfrak w)$, then the initial vertex space in $C$ is sent to the new loop $\mu_a$ in $X_{\mathfrak v}$, and the terminal vertex space is sent to the cycle $\hat u_a$ in $X_{\mathfrak w}$.

    \item Some cylinder in $Z(u_e)$ is mapped to the cylinder in $Z$ corresponding to the edge $\hat e$ of $\mathbb X$ (and hence to the edge $e$ of $\GG$).  In particular, $\Omega_e$ has at least one edge.
\end{itemize}
Note that $Z(u_e)$ deformation retracts to any of its vertex spaces, it has fundamental group $C_e$, and the map $Z(u_e)\to Z$ induces the inclusion $C_e\hookrightarrow\pi_1Z\cong \F$.

\begin{defn}\label{defn:DT-v-omega}
For each vertex $\omega\in\Omega_e$, let $\mathfrak v(\omega)$ be the vertex in $\mathbb X$ such that the $\omega$--vertex space in $Z(u_e)$ is sent to the vertex space $X_{\mathfrak v(\omega)}$.  
\end{defn}

The tree of cylinders construction shows that $\Delta$ has a black vertex $b_e$ for each $Z(u_e)$, and $b_e$ has one incident edge in $\Delta$ for each $\omega\in \vertices(\Omega_e)$ such that $\rank(\pi_1(X_{\mathfrak v(\omega)}))>1$.

\begin{lem}\label{lem:omega-e-nielsen}
Let $\ell\subset \GG\subset Z$ be a linear edge of $\GG$.  Then $\ell$ is in the image of the map $\Omega_e\hookrightarrow T\to \mathbb X$ if and only if $u_e=u_{\ell}$ (as unbased immersed cycles).
\end{lem}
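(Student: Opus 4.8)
The plan is to compute the edge stabilizers of $T$ in terms of Nielsen cycles, to observe that $\Omega_e$ is exactly the subtree of $T$ fixed pointwise by $C_e$, and then, writing $\hat\ell$ for the edge of $\mathbb X$ determined by the linear edge $\ell$, to translate ``some lift of $\hat\ell$ lies in $\Omega_e$'' into the stated condition on $u_e$ and $u_\ell$ by exploiting root-freeness.

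First I would pin down the $\F$--stabilizers of the edges of $T$. For a linear edge $a$ of $\GG$, the edge $\hat a$ of $\mathbb X$ has a circle as its edge space, glued along the new loop $\mu_a\subset X_{\mathfrak v}$ at one end and along the immersed cycle $\hat u_a\subset X_{\mathfrak w}$ at the other. The core circle of the corresponding cylinder in $Z$ is freely homotopic in $Z$ both to $\mu_a$ and to $\hat u_a$ --- push it across the cylinder, using that the linear edge $a$ sits in that cylinder as a transversal $\{\theta\}\times[-1,1]$ --- and $\hat u_a$ is freely homotopic in $Z$ to the Nielsen cycle $u_a$ by the Dahmani--Touikan construction. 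Hence the edge group of $\hat a$, regarded inside $\F=\pi_1 Z$, is (up to conjugacy) the cyclic subgroup $\langle[u_a]\rangle$ generated by the conjugacy class of $u_a$; consequently, as $\tilde a$ ranges over the lifts of $\hat a$ to $T$, the stabilizers $\stabilizer_\F(\tilde a)$ range exactly over the $\F$--conjugates of $\langle[u_a]\rangle$. By the \textbf{Linear Edges} property of CT maps each $u_a$ is root-free, so $\langle[u_a]\rangle$ is generated by an element of $\F$ that is not a proper power; the same applies to $C_e=\langle[u_e]\rangle$ (up to conjugacy), since $u_e$ is root-free.

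Next I would reformulate membership in $\Omega_e$. The set of points of $T$ fixed by a group is a subtree, so $\Omega_e$, being the maximal subtree fixed pointwise by $C_e$, coincides with $\Fix_T(C_e)$. Therefore $\hat\ell$ lies in the image of $\Omega_e\hookrightarrow T\to\mathbb X$ if and only if some lift $\tilde\ell$ of $\hat\ell$ satisfies $C_e\leq\stabilizer_\F(\tilde\ell)$; by the previous paragraph this holds for some lift if and only if $C_e$ is conjugate into $\langle[u_\ell]\rangle$, i.e. $[u_e]$ is conjugate in $\F$ to $[u_\ell]^n$ for some $n\in\integers$. As $u_e$ is a nontrivial cycle, $n\neq 0$, and as $u_e$ is root-free, $|n|=1$. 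Hence the condition is that $\langle[u_e]\rangle$ and $\langle[u_\ell]\rangle$ are conjugate subgroups of $\F$, equivalently --- using the \textbf{Linear Edges} clause that distinct linear edges with the same axis share the same suffix cycle, which removes the apparent orientation ambiguity --- that $u_e=u_\ell$ as unbased immersed cycles. The converse runs through the same equivalences: if $u_e=u_\ell$ as unbased immersed cycles then $C_e$ is conjugate into the edge group over $\hat\ell$, so the corresponding lift $\tilde\ell$ has $C_e\leq\stabilizer_\F(\tilde\ell)$, whence $\tilde\ell\subset\Fix_T(C_e)=\Omega_e$ and $\hat\ell$ is in the image.

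The main obstacle will be the first step: identifying the $T$--edge stabilizers with the conjugates of $\langle[u_a]\rangle$ forces one to unwind how $Z$ is assembled, and in particular to check that the new loop $\mu_a$, the immersed cycle $\hat u_a$, and the Nielsen cycle $u_a$ all represent one and the same conjugacy class in $\pi_1 Z=\F$. This should follow from the two homotopies already produced in the Dahmani--Touikan construction (the homotopy $\hat u_a\simeq u_a$ in $Z$ and the deformation retraction of $Z$ onto $\GG$ collapsing the new loops), but it needs to be made explicit. The remaining ingredients --- that $\Omega_e=\Fix_T(C_e)$, the passage from ``conjugate to a power of a root-free element'' down to ``conjugate up to inversion'', and the orientation bookkeeping --- are routine.
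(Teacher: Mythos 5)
Your proposal is correct and follows essentially the same route as the paper: translate ``$\hat\ell$ lies in the image of $\Omega_e$'' into a statement about stabilizers of lifts in $T$, reduce to conjugacy of the cyclic subgroups $C_e$ and $C_\ell$ in $\F$, and then invoke the bijection between conjugacy classes of (maximal) cyclic subgroups and unbased immersed cycles. The paper is a bit terser: it works directly with the statement $\stabilizer_\F(\tilde\ell)=C_e$ (where your version first has containment and then upgrades to equality via root-freeness, which amounts to the same thing, since the edge stabilizers are already maximal cyclic by the CT Linear Edges property), and it treats the identification of $T$--edge stabilizers with $\F$--conjugates of $\langle[u_a]\rangle$ as already known from the Dahmani--Touikan construction rather than rederiving it. Your extra care about orientation via the Linear Edges clause is reasonable but not needed once ``unbased immersed cycle'' is read as unoriented, which is the convention that makes the final equivalence with conjugacy of cyclic subgroups exact.
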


\begin{proof}
Let $\tilde e$ be the lift of $\hat e$ to $T$ that is fixed by $C_e$.  Let $\ell$ be a linear edge.  Then $\ell$ is in the image of $\Omega_e\hookrightarrow T\to \mathbb X$ if and only if there is a lift $\tilde \ell$ of $\hat \ell$ such that $\stabilizer_{\F}(\tilde \ell)=C_e$.  This holds if and only if $C_{\ell}$ can be chosen in its conjugacy class so that $C_e=C_{\ell}$.  But the conjugacy class of $C_{\ell}$ is represented by a homotopy class of closed paths containing a unique immersed cycle, namely $u_{\ell}$, so $C_e$ and $C_{\ell}$ are conjugate if and only if $u_e=u_{\ell}$. 
\end{proof}

The preceding lemma immediately yields:

\begin{cor}\label{cor:omega-e-support}
For each linear edge $e$, the set of linear edges $e_i$ lying in the image of $\Omega_e\to \mathbb X$ is exactly the set of linear edges $e_i$ supported on the Nielsen cycle $u_e$.
\end{cor}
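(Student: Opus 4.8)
The plan is to read the corollary off from Lemma~\ref{lem:omega-e-nielsen} after unwinding the definition of ``support'' from Section~\ref{subsec:IRTT}. First I would recall the bookkeeping: a linear edge $e_i$ of $\GG$ is, via the embedding $\GG\hookrightarrow Z$, assigned an edge $\hat e_i$ of $\mathbb X$, so the phrase ``$e_i$ lies in the image of $\Omega_e\to\mathbb X$'' means precisely that $\hat e_i$ lies in the image of the simplicial map $\Omega_e\hookrightarrow T\to\mathbb X$. Next I would observe that, for the linear edge $e_i$ with $f(e_i)=e_i\cdot u_i^{p_i}$ in the notation of Section~\ref{subsubsec:DT-construction-recall} (so $u_i$ is the root-free Nielsen cycle representing the conjugacy class of the suffix, equivalently the cycle $w_i$ appearing in the \textbf{Linear Edges} clause of the CT axioms), a Nielsen cycle $u$ \emph{supports} $e_i$ exactly when $u_i$ is a nonzero power of a cyclic permutation of $u$; since $u_i$ is already root-free, this is simply the condition that $u_i=u$ as unbased immersed cycles. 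Hence the set of linear edges of $\GG$ supported on $u_e$ is exactly $\{\,e_i : u_{e_i}=u_e \text{ as unbased immersed cycles}\,\}$.

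With this in hand I would invoke Lemma~\ref{lem:omega-e-nielsen} with $\ell=e_i$: the edge $\hat e_i$ lies in the image of $\Omega_e\hookrightarrow T\to\mathbb X$ if and only if $u_e=u_{e_i}$ as unbased immersed cycles. Comparing with the previous sentence, the linear edges of $\GG$ that appear in the image of $\Omega_e\to\mathbb X$ are precisely those supported on $u_e$, which is the assertion of the corollary.

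I do not expect a genuine obstacle here; the single point requiring care is the identification of the two a priori distinct cycles naturally attached to a linear edge---the suffix cycle $w_i$ coming from the CT map and the cycle $u_e$ used in the \cite{DahmaniTouikan} cylinder-complex construction---which has been arranged to cause no trouble by taking all of the $u_i$ to be root-free Nielsen cycles throughout Section~\ref{subsubsec:DT-construction-recall}, so that ``$u_i$ is a cyclic permutation of a power of $u$'' collapses to ``$u_i=u$ as unbased immersed cycles.''
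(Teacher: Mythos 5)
Your proposal is correct and matches the paper's argument: the paper states that Corollary~\ref{cor:omega-e-support} follows immediately from Lemma~\ref{lem:omega-e-nielsen}, and your proof does exactly this, merely making explicit the (correct) identification of ``$e_i$ is supported on $u_e$'' with ``$u_{e_i}=u_e$ as unbased immersed cycles'' via root-freeness of the suffix cycles in the CT setting.
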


\begin{defn} A \emph{leaf} $\omega \in \Omega_e$ is a valence one vertex. 
It is \emph{outgoing} (resp. \emph{incoming}) if the edge in $\Omega_e$ incident to $\omega$ starts (resp. terminates) at $\omega$.
\end{defn}

Outgoing leaves in $\Omega_e$ correspond to new loops in $Z$, in the following sense.  Let  $\omega\in \Omega_e$ be an outgoing leaf and let $a$ be the linear edge of $\GG$ such that this edge in $\Omega_e$ maps to $\hat a$.  Then the $\omega$--vertex space in $Z(u_e)$ maps to the new loop $\mu_a$ in $X_{\mathfrak v(\omega)}$.  

\begin{lem}\label{lem:omega-leaves-white-vertices}
If $\omega\in\Omega_e$ is a leaf, then $X_{\mathfrak v(\omega)}$ corresponds to a white vertex of $\Delta$. 
\end{lem}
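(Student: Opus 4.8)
The plan is to argue by contradiction. Suppose $\omega$ is a leaf of $\Omega_e$ but $X_{\mathfrak v(\omega)}$ is not a white vertex of $\Delta$, i.e.\ (by the correspondence recalled above) $\rank(\pi_1 X_{\mathfrak v(\omega)})\le 1$. First I would pin down the vertex group exactly. The stabilizer $\stabilizer_{\F}(\omega)$ is isomorphic to $\pi_1 X_{\mathfrak v(\omega)}$ and contains $C_e$, since $\omega\in\Omega_e$ and $C_e$ fixes $\Omega_e$ pointwise; as $C_e$ is infinite cyclic the rank cannot be $0$, so $\pi_1 X_{\mathfrak v(\omega)}\cong\integers$, and since $C_e$ is generated by a conjugate of the root-free cycle $u_e$ and sits inside a rank-one free group, it is the whole group, so $\stabilizer_{\F}(\omega)=C_e$. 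Next I would upgrade ``leaf of $\Omega_e$'' to ``leaf of $T$'': if $\omega$ had a second incident edge $\epsilon$ in $T$ then $\stabilizer_{\F}(\epsilon)\le C_e$, but $\stabilizer_{\F}(\epsilon)$ is conjugate to an edge group of $\mathbb X$, hence is generated by a root-free element, which forces $\stabilizer_{\F}(\epsilon)=C_e$; then $C_e$ fixes $\epsilon$, so $\epsilon\subset\Omega_e$ by maximality, contradicting that $\omega$ is a leaf of $\Omega_e$. Hence $\mathfrak v(\omega)$ is a valence-one vertex of $\mathbb X$ whose vertex group coincides with its unique incident edge group.

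I would then rule out this configuration by a short case analysis of $X_{\mathfrak v(\omega)}=K_{\mathfrak v(\omega)}\cup\{\text{new loops}\}$, using the standing requirement that the marked graph $\GG$ has no vertex of valence $\le 1$. If $\omega$ is an \emph{outgoing} leaf, its incident edge is some $\hat a$ and $X_{\mathfrak v(\omega)}$ contains the new loop $\mu_a$, so $\pi_1 X_{\mathfrak v(\omega)}=\pi_1 K_{\mathfrak v(\omega)}\ast\langle\mu_a\rangle$, and rank one forces $K_{\mathfrak v(\omega)}$ to be a tree. Since $\mathfrak v(\omega)$ has valence one, $a$ is the only linear edge meeting $K_{\mathfrak v(\omega)}$ and it meets it only at its initial vertex; hence any vertex of the tree $K_{\mathfrak v(\omega)}$ other than that initial vertex is incident in $\GG$ to a single (fixed) edge, contradicting the no-valence-$\le 1$ requirement (the degenerate case $K_{\mathfrak v(\omega)}$ a point is handled the same way, unless another linear edge with the same Nielsen cycle $u_e$ meets that point, in which case its lift lies in $\Omega_e$ by Lemma~\ref{lem:omega-e-nielsen} and $\omega$ is not a leaf). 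If $\omega$ is an \emph{incoming} leaf, then $X_{\mathfrak v(\omega)}$ carries no new loop and the incident edge $\hat a$ glues in along the immersed cycle $\hat u_a\simeq u_a$, with $u_a=u_e$ by Lemma~\ref{lem:omega-e-nielsen}; rank one forces $\pi_1 K_{\mathfrak v(\omega)}=\langle\hat u_a\rangle\cong\integers$, and since $\hat u_a$ is then an immersed cycle of fixed edges it literally equals $u_a$, so $a\in S_1$ and, again by the no-valence-$\le 1$ requirement, $K_{\mathfrak v(\omega)}$ is a single embedded circle traversed once by $u_a$.

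It remains, in the incoming case, to derive a contradiction. Here $\mathfrak v(\omega)$ has valence one and no new loop, so the linear edge $a$ is a separating edge of $\GG$ whose removal detaches the circle $K_{\mathfrak v(\omega)}$; a direct computation with $f(a)=a\,u_a^{p_a}$ then shows that re-declaring $f(a)=a$ yields a topological representative of the \emph{same} outer automorphism $\phi$ (the inserted copies of $u_a^{\pm p_a}$ cancel around the fixed cycle $u_a$). This contradicts the fact that a CT map has no such inessential linear edge. Closing off this last point is the main obstacle, and it is the one place where I would have to invoke the finer structure of CT maps from \cite{FeighHandel:recognition} — rotationlessness together with the ``linear edges'' and filtration axioms forbid a linear edge whose contribution to $\phi$ is trivial — rather than only the properties quoted in this section.
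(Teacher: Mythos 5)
Your argument reaches the correct conclusion, but by a genuinely different --- and substantially longer --- route than the paper's. The paper's proof is two sentences and works entirely inside the tree-of-cylinders framework: since $\Omega_e$ is the maximal subtree of $T$ fixed pointwise by $C_e$, if $C_e$ had finite index in $\stabilizer_{\F}(\omega)$ then $\stabilizer_{\F}(\omega)=C_e$ (by maximality of the root-free cyclic $C_e$), and one could extend $\Omega_e$ past the leaf $\omega$, a contradiction; hence $\stabilizer_{\F}(\omega)$ is free of rank at least two, so $\mathfrak v(\omega)$ is white. The one thing the paper leaves implicit --- and which your proof is trying to supply by hand --- is that $\omega$ cannot be a valence-one vertex of $T$, i.e.\ there really is a second edge at $\omega$ to extend the cylinder into. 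This is automatic in the Guirardel--Levitt/Dahmani--Touikan tree-of-cylinders setup, which takes the $\F$-action on $T$ to be minimal. Your first paragraph correctly replicates the paper's reduction: under the contrary hypothesis you establish $\stabilizer_{\F}(\omega)=C_e$ and then correctly observe that any second edge at $\omega$ would also have stabilizer $C_e$ and hence lie in $\Omega_e$, forcing $\omega$ to be valence one in $T$. But rather than invoking minimality of $T$ at that point, you descend to the marked graph $\GG$ and run an outgoing/incoming case analysis. The outgoing case goes through cleanly on the no-valence-$\le 1$ condition. The incoming case pushes you into the ``re-declare $f(a)=a$'' argument, which, as you candidly flag, needs CT-axiomatic input (reduced filtration / no non-contributing linear strata) that this section never quotes --- so your proof, as written, has an acknowledged hole precisely there. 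The moral: the paper's JSJ-cylinder argument replaces all of your graph-level combinatorics with the single observation that a minimal tree has no dead ends, and that is the fact you should add to close your proof rather than the CT analysis.
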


\begin{proof}
Viewing $\Omega_e$ as the $C_e$--JSJ cylinder, we see that $\stabilizer_{\F}(\omega)$ must contain $C_e$ as an infinite-index subgroup, since otherwise we could enlarge the JSJ cylinder $\Omega_e$.  Hence multiple cylinders in $T$ contain $\omega$, so it corresponds to a white vertex.
\end{proof}

\begin{cor}\label{cor:delta-valence-bound}
Let $b_e$ be a black vertex of $\Delta$.  Then the valence of $b_e$ in $\Delta$ is equal to the number of leaves of $\Omega_e$, plus the number of non-leaf vertices $\omega$ such that $\rank(X_{\mathfrak v(\omega)})>1$.
\end{cor}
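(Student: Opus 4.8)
The plan is to deduce the formula from the tree-of-cylinders description of $\Delta$ recalled above together with Lemma \ref{lem:omega-leaves-white-vertices}, after which the statement becomes a bookkeeping argument.

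First I would set $\mathcal W_e=\{\,\omega\in\vertices(\Omega_e):\rank(\pi_1(X_{\mathfrak v(\omega)}))>1\,\}$. By the tree-of-cylinders construction, the black vertex $b_e$ has exactly one incident edge of $\Delta$ for each element of $\mathcal W_e$, so the valence of $b_e$ in $\Delta$ is $|\mathcal W_e|$ (here $\rank(X_{\mathfrak v(\omega)})$ in the statement means $\rank(\pi_1(X_{\mathfrak v(\omega)}))$, as in the rest of this section). Now partition $\vertices(\Omega_e)$ into its set $L_e$ of leaves and its set $N_e$ of non-leaves, so that $|\mathcal W_e|=|\mathcal W_e\cap L_e|+|\mathcal W_e\cap N_e|$. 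The term $|\mathcal W_e\cap N_e|$ is, by definition, the number of non-leaf vertices $\omega$ of $\Omega_e$ with $\rank(\pi_1(X_{\mathfrak v(\omega)}))>1$, which is exactly the second summand in the claimed formula.

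It therefore remains to check that $L_e\subseteq\mathcal W_e$, i.e. that every leaf of $\Omega_e$ contributes an edge at $b_e$; this is where Lemma \ref{lem:omega-leaves-white-vertices} enters. If $\omega\in L_e$, then that lemma says $X_{\mathfrak v(\omega)}$ corresponds to a white vertex of $\Delta$, and the white vertices of $\Delta$ are precisely the $\mathfrak v\in V$ with $\rank(\pi_1(X_{\mathfrak v}))\geq 2$; hence $\rank(\pi_1(X_{\mathfrak v(\omega)}))>1$ and $\omega\in\mathcal W_e$. Thus $\mathcal W_e\cap L_e=L_e$ has cardinality equal to the number of leaves of $\Omega_e$, and the valence of $b_e$ equals the number of leaves of $\Omega_e$ plus the number of non-leaf $\omega$ with $\rank(\pi_1(X_{\mathfrak v(\omega)}))>1$, as asserted.

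Since both inputs are already in hand, there is no genuine difficulty; the only point requiring a little care is the bookkeeping remark that the incident edges of $b_e$ in $\Delta$ are indexed by the vertices of $\Omega_e$ satisfying the rank condition (counted with multiplicity, not by distinct image vertex spaces), so that the valence is literally $|\mathcal W_e|$ and the two summands above form a genuine partition.
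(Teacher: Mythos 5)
Your argument is correct and follows essentially the same route as the paper: identify the valence of $b_e$ with the count of $\omega\in\Omega_e$ having $\rank(\pi_1(X_{\mathfrak v(\omega)}))>1$, split over leaves versus non-leaves, and invoke Lemma \ref{lem:omega-leaves-white-vertices} to see that every leaf counts. The one small difference is that the paper justifies the first step by invoking Lemma \ref{lem:valence} to pass from valences in $\Delta$ to valences in the Bass--Serre tree $\widetilde\Delta$ before counting in $\Omega_e$, whereas you take the count directly from the tree-of-cylinders discussion preceding the corollary; since that discussion already records the needed bijection, this is a difference of emphasis rather than substance.
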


\begin{proof}
By Lemma \ref{lem:valence}, the valence of $b_e$ in $\Delta$ is the same as the valence in $\widetilde{\Delta}$ of the vertex $\tilde b_e$ stabilized by $C_e$.  This in turn coincides with the number of vertices $\omega\in\Omega_e$ such that $X_{\mathfrak v(\omega)}$ corresponds to a white vertex of $\Delta$.  Lemma \ref{lem:omega-leaves-white-vertices} implies that each leaf $\omega$ of $\Omega_e$ has this property, so each leaf contributes one to the valence of $b_e$.  A non-leaf vertex $\omega$ contributes $1$ if and only if $\rank(X_{\mathfrak v(\omega)})>1$.
\end{proof}

\begin{remark}
The corollary is illustrated by Example \ref{exmp:not-rich} and Figure \ref{fig:excessive} (right), where the cylinder subtree is $\Omega_{C_1}=\Omega_{C_2}$, the associated cylinder complex is the union of the two cylinders in the picture, and the three white vertices correspond to the two leaves along with a third white vertex for the free group generated by $A$ and $P$. 
\end{remark}

\begin{lem}\label{lem:omega-no-saddle}
If $\omega\in\Omega_e$ is a non-leaf vertex, then all edges of $\Omega_e$ incident to $\omega$ are incoming.
\end{lem}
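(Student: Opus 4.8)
The plan is to argue by contradiction, generalising the observation recorded above that an \emph{outgoing} leaf of $\Omega_e$ has its vertex space sent to a new loop.

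First I would pin down the local structure at a vertex $\omega\in\Omega_e$. Since $C_e$ fixes $\Omega_e$ pointwise, the $\omega$--vertex space of $Z(u_e)$ is $Y_\omega=C_e\backslash\widetilde{X_{\mathfrak v(\omega)}}$, the cover of the graph $X_{\mathfrak v(\omega)}$ associated to the cyclic subgroup $C_e\leq\pi_1X_{\mathfrak v(\omega)}$; in particular $\pi_1Y_\omega\cong C_e\cong\integers$, and a connected graph with infinite cyclic fundamental group contains a unique embedded essential cycle, which I call its \emph{core}. For every edge $\tilde b$ of $\Omega_e$ incident to $\omega$, the cylinder of $Z(u_e)$ lying over $\tilde b$ is glued into $Y_\omega$ along an embedded circle which is $\pi_1$--surjective — the relevant edge group is $C_e$, and this equals $\pi_1Y_\omega$ because $u_e$ is root-free by the \textbf{Linear Edges} property of CT maps — hence that gluing circle is the core of $Y_\omega$. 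Thus all cylinders incident to $\omega$ are glued along the \emph{same} core circle, and under the combinatorial immersion $Z(u_e)\to Z$ this core maps onto $\mu_b\subset X_{\mathfrak v(\omega)}$ when $\tilde b$ is outgoing at $\omega$, and onto the immersed cycle $\hat u_b\subset X_{\mathfrak v(\omega)}$ when $\tilde b$ is incoming. This step I would justify from the universal-cover model $Z(u_e)=C_e\backslash\widetilde Z(u_e)$, the gluing line of each edge strip being the $C_e$--axis in the tree $\widetilde{X_{\mathfrak v(\omega)}}$.

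Now suppose $\omega$ is not a leaf but has an outgoing incident edge $\tilde a$, lying over the linear edge $a$. By the previous paragraph the core of $Y_\omega$ maps onto the single new edge $\mu_a$. Since $\omega$ is not a leaf there is a second incident edge $\tilde c$, over a linear edge $c$, and the same core also maps onto $\mu_c$ (if $\tilde c$ is outgoing) or onto $\hat u_c$ (if $\tilde c$ is incoming). In the first case $\mu_c=\mu_a$ as subgraphs of $X_{\mathfrak v(\omega)}$, so $c=a$ since distinct linear edges carry distinct new loops; then $\tilde a$ and $\tilde c$ are distinct edges of $T$ over $\hat a$, both incident to $\omega$ and both fixed by $C_e$, so $C_e$ lies in the intersection of two distinct conjugates, inside the free group $\pi_1X_{\mathfrak v(\omega)}$, of the image of $G_{\hat a}$; that image is a maximal cyclic, hence malnormal, subgroup, so the intersection is trivial, contradicting $C_e\neq 1$. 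In the second case $\hat u_c$ traverses the single new edge $\mu_a$ exactly once; but by Lemma 2.8 of \cite{DahmaniTouikan} the Nielsen cycle $u_c$ splits as a concatenation of fixed edges and hairpins $b\,u_b^{m}\,b^{-1}$, and $\hat u_c$ is obtained by replacing each hairpin by the new loop $\mu_b^{m}$, so since each fixed edge and each $\mu_b^{m}$ contributes at least one edge-traversal, this forces $u_c$ to be the single hairpin $a\,u_a\,a^{-1}$ — which is not an immersed cycle, because reading it cyclically produces the backtrack $a^{-1}a$ at the initial vertex of $a$. Either way we reach a contradiction, so no incident edge of a non-leaf vertex of $\Omega_e$ is outgoing.

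The only subtle point is the identification in the second paragraph — that each cylinder of $Z(u_e)$ at $\omega$ is glued along the (unique) essential embedded circle of $Y_\omega$, so that the images $\mu_b$ for outgoing edges and $\hat u_b$ for incoming edges all coincide inside $X_{\mathfrak v(\omega)}$. Once this is in hand, the remainder is a short combinatorial check using the CT description of indivisible Nielsen paths from Section \ref{subsec:IRTT} together with malnormality of maximal cyclic subgroups of free groups.
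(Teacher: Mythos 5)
Your proof is correct and has the same overall skeleton as the paper's: reduce to the observation that the $\omega$--vertex space of $Z(u_e)$ has cyclic fundamental group carried by a single core cycle, note that each gluing circle must trace that core, and then derive a contradiction from the orientations. The place where you diverge is the key contradiction when an outgoing edge over $a$ coexists with an incoming edge over $c$. The paper observes directly that $\hat u_c$ cannot traverse $\mu_a$ because $u_c=u_e=u_a$ and the Nielsen cycle $u_a$ lives strictly below $a$ in the filtration, so it can never contain $a$; hence $\hat u_c\neq \mu_a$ and the two gluing circles cannot share an image. You instead assume $\hat u_c=\mu_a^{\pm1}$ and invert the Dahmani--Touikan hairpin replacement to conclude $u_c=au_a^{\pm1}a^{-1}$, which fails to be cyclically reduced — a valid, if slightly more computational, route to the same contradiction; it relies on the forward map $u_c\mapsto\hat u_c$ being deterministic and preserving the piece count, which does hold here. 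You also make explicit (via malnormality of the maximal cyclic edge group) why two distinct edges of $\Omega_e$ at $\omega$ cannot lie over the same linear edge of $\GG$; the paper takes this for granted when it opens with ``Let $a,b$ be distinct linear edges.'' On balance: the paper's filtration argument is the shorter and more structural one, while yours is a bit more hands-on but fills in a small implicit step that the paper leaves to the reader.
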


\begin{proof}
Let $a,b$ be distinct linear edges supported on $u_e$ and having endpoints on $K_{\mathfrak v(\omega)}$, which exist by Corollary \ref{cor:omega-e-support} and the assumption that $\omega$ is not a leaf.  Since $u_e=u_a=u_b$, the Nielsen path $u_b$ cannot traverse $a$, and vice versa, so $\hat u_b$ cannot traverse $\mu_a$, and vice versa.  On the other hand, the vertex space of $Z(u_e)$ corresponding to $\omega$ is a graph whose fundamental group is carried by a cycle $\mu_a$ or $\hat u_a$, according to the orientation of $a$.  Since $\mu_a\neq \mu_b$, it follows that $a$ and $b$ correspond to edges of $\Omega$ that are incoming at $\omega$.
\end{proof}

\begin{proof}[Proof of Theorem \ref{lem:unbranched-implies-one-Nielsen}]
By Lemma \ref{lem:linear-unbranched}, $\maptor$ fails to be unbranched if and only if there is a black vertex $b$ of valence at least three.  Since black vertices correspond to cylinder complexes, Corollary \ref{cor:delta-valence-bound} and Lemma \ref{lem:omega-no-saddle} imply that this is equivalent to the existence of a linear edge $e$ of $\GG$ such that $\Omega_e$ has a single non-leaf vertex $\omega$ of valence at least $2$ in $\Omega_e$, and either $\omega$ has valence at least $3$, or $\rank(X_{\mathfrak v(\omega)})>1$.  Let $u=u_e$ be the Nielsen cycle in $\GG$ supporting $e$.  By Corollary \ref{cor:omega-e-support}, this is equivalent to the statement that the set $E(u)$ of linear edges supported on $u$ has cardinality at least $2$, and either $|E(u)|\geq 3$ or $\rank(X_{\mathfrak v(\omega)})>1$.  Finally, let $x\in\GG$ be a vertex traversed by $u$.  Recall that the inclusion $(\GG,x)\to(Z,x)$ induces an isomorphism on fundamental groups, and in fact that there is a deformation retraction $Z\to \GG$ formed by successively collapsing free faces corresponding to new loops.  In particular, if $\hat p$ is an immersed closed path in $X_{\mathfrak v(\omega)}$ representing a nontrivial element of $\pi_1(X_{\mathfrak v(\omega)},x)$, then $\hat p$ is homotopic (rel endpoints) in $Z$ to a Nielsen path $p$, and hence $\rank(\Fix_x(f))\geq \rank(\pi_1(X_{\mathfrak v(\omega)},x))\geq 2$.  It follows that $|E(u)\cup T(u)|\geq |E(u)\cup\{x\}|\geq 3$.

Conversely, suppose that $E(u)$ has exactly two elements $a,b$, and $\rank(X_{\mathfrak v(\omega)})\leq 1$.  We claim that $|T(u)|=\emptyset$.  We first claim that $X_{\mathfrak v(\omega)}$ consists of fixed edges.  If not, then there is a linear edge $c$ such that $X_{\mathfrak v(\omega)}$ contains the new loop $\mu_c$, so since $\rank(X_{\mathfrak v(\omega)})\leq 1$, each of $\hat u_a,\hat u_b$ (which are immersed cycles in $X_{\mathfrak v(\omega)}$ corresponding to maximal cyclic subgroups) coincides with $\mu_c^{\pm}$.  This means that $a$ and $b$ have the same terminal vertex and, by examining the homotopy induced by collapsing free faces, $u=cu_c^{\pm 1}c^{-1}$, contradicting that $u$ is an immersed cycle.  Hence $X_{\mathfrak v(\omega)}=K_{\mathfrak v(\omega)}$, and in particular $\hat u_a=\hat u_b=u$ is a Nielsen path consisting of fixed edges.  For any vertex $x$ of $u$, no linear edge has initial vertex $x$ (that would lead to a new loop in $X_{\mathfrak v(\omega)}$) and no linear edge other than $a$ and/or $b$ terminates at $x$.  Hence, exactly as in the first part of Example \ref{exmp:not-rich}, $\rank(\Fix_x(f))<2$.  Thus $T(u)=\emptyset$, as claimed. 

Now we can summarise.  First, suppose that $\maptor$ is not unbranched.  As noted above, this implies there is a Nielsen cycle $u$ supporting at least two linear edges, and either $|E(u)|\geq 3$, or $|E(u)|=2$ and $u$ traverses a vertex $x$ with $\rank(\Fix_x(f))\geq 2$, so $x\in T(u)$.  In either case, we have verified excessive linearity, so \eqref{maptorunbranched} implies \eqref{onelinearedgepercycle}.  Conversely, suppose that excessive linearity holds, and choose a Nielsen cycle $u$ with $|E(u)\cup T(u)|\geq 3$.  As noted above, either $\maptor$ is unbranched, or for all choices of $u$ with $|E(u)|\geq 2$, we have $|E(u)|=2$, but the $u$ witnessing excessive linearity sastisfies $u=u_e$ where the non-leaf vertex $\omega\in\Omega_e$ satisfies $\rank(X_{\mathfrak v(\omega)})> 1$ (using the preceding paragraph).  As noted above, this also implies that $\maptor$ is not unbranched, so we have shown \eqref{onelinearedgepercycle} implies \eqref{maptorunbranched}.
\end{proof}

\section{Proofs of Theorem \ref{thm:main} and Theorem \ref{exceptCor}}\label{sec:proof}
\begin{proof}[Proof of Theorem \ref{thm:main}] The theorem is now a consequence of the preceding lemmas.  Indeed, \eqref{item:coarse-median} implies \eqref{item:unbranched-blocks} by Lemma \ref{lem:coarse-median-implies-unbranched-blocks}, and \eqref{item:unbranched-blocks} implies \eqref{item:virtual-hhg} and hence \eqref{item:coarse-median} by Lemma \ref{lem:unbranched-implies-HHG}.  Assertion \eqref{item:virtual-hhg} implies \eqref{item:QI-to-cube-complex} by \cite[Thm. B]{Pet}, which implies \eqref{item:coarse-median} as an immediate consequence of the definition of a coarse median space \cite{Bowditch:coarse-median}.  In the proof of Lemma \ref{lem:coarse-median-implies-unbranched-blocks}, we showed that, if $\maptor$ is not unbranched, then there is a quasi-isometrically embedded $2$--RBF in $\maptor$, i.e. \eqref{item:no-2-RBF} implies \eqref{item:unbranched-blocks}, while in the same proof, we observed that the results in \cite{MunroPetyt} imply that \eqref{item:coarse-median} implies \eqref{item:no-2-RBF}.  The equivalence of hierarchical hyperbolicity (\ref{item:actual-hhg}) and virtual hierarchical hyperbolicity (\ref{item:virtual-hhg}) is given by Proposition \ref{prop:remove-virtual}.  Thus all of the enumerated parts of the theorem are equivalent.  The coarse median rank of $\maptor$ (if it is coarse median) is finite, by \cite[Thm. 7.3]{HHSII}.
\end{proof}

Finally, we characterize (virtual) HHGs among free by cyclic groups in terms of restriction of $\phi$ to invariant free factors where the rate of growth of conjugacy classes is at most linear. For the definition of \emph{excessive linearity},  see Definition  \ref{defn:excessive-linearity}. 

\begin{exceptCor}\label{cor:excep}
Let $\phi\in\Out(F)$ have infinite order. Then the following are equivalent, where $k\geq 1$ is a constant depending only on $\rank(\F)$:
\begin{enumerate}
    \item $\F\rtimes_{\phi}\integers$ is a colorable hierarchically hyperbolic group.\label{item:col-hhg}
    \item Given any $\phi^k-$invariant free factor $[F^1]$ such that ${{\phi^k}\mid}_{F^1}$ has at most linear growth, no CT map representing this restriction has excessive linearity. \label{item:excessive-linearity}
\end{enumerate}
\end{exceptCor}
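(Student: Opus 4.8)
The plan is to pass to a rotationless power, reduce both statements to unbranchedness of $\maptor^k:=\F\rtimes_{\phi^k}\integers$ using Theorem~\ref{thm:main}, and then recognise unbranchedness from the restriction of $\phi^k$ to its linearly-growing invariant free factors via the decompositions of Section~\ref{sec:free-by-Z} together with Theorem~\ref{lem:unbranched-implies-one-Nielsen}. First I would fix $k\geq1$, depending only on $\rank(\F)$, large enough that $\phi^k$ is rotationless and that the restriction of $\phi^k$ to any $\phi^k$--invariant free factor is rotationless and --- when of polynomial growth --- UPG; such a $k$ exists by standard train-track theory (\cite{BFH-00,FeighHandel:recognition,BFH:kolchin}: a rotationless power exists depending only on the rank, restrictions of rotationless automorphisms are rotationless, and a rotationless automorphism of polynomial growth is UPG). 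By Lemma~\ref{lem:unbranched-index} and Theorem~\ref{thm:main}, \eqref{item:col-hhg} holds if and only if $\maptor^k$ has unbranched blocks.

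\textbf{The key reduction.} The core of the argument is the equivalence: \emph{$\maptor^k$ has unbranched blocks if and only if $F^1\rtimes_{\phi^k|_{F^1}}\integers$ has unbranched blocks for every $\phi^k$--invariant free factor $[F^1]$ of $\F$ on which $\phi^k$ has at most linear growth.} I would prove both directions by contraposition. If $\maptor^k$ is not unbranched, Propositions~\ref{prop:relhyper} and \ref{prop:relhyp-unbranched} produce a peripheral $\Gamma_i=F_i\rtimes_{\phi^k|_{F_i}}\integers$, with $\phi^k|_{F_i}$ of polynomial growth, that is not unbranched; iterating Proposition~\ref{prop:unbranched-cyclic-splitting} along the cyclic hierarchy of $\Gamma_i$ (Proposition~\ref{prop:cyclic-hierarchy}; its graph-of-groups decompositions have cyclic edge groups and acylindrical Bass--Serre actions), or taking $\Gamma_i$ itself when $\phi^k|_{F_i}$ is already linear, yields a terminal vertex group that is a mapping torus of a linearly growing automorphism and is not unbranched. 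By rotationlessness of $\phi^k$ its fibre is a $\phi^k$--invariant free factor $[F^1]$ with linear restriction, and this terminal vertex group is commensurable to $F^1\rtimes_{\phi^k|_{F^1}}\integers$, so the latter is not unbranched by Lemma~\ref{lem:unbranched-index}. Conversely, if some $F^1\rtimes_{\phi^k|_{F^1}}\integers$ is not unbranched, then after conjugating into a peripheral $\Gamma_i$ it lies in a terminal vertex group $H=F''\rtimes\integers$ of the cyclic hierarchy of $\Gamma_i$ (every linear mapping torus inside $\Gamma_i$ does, by Proposition~\ref{prop:cyclic-hierarchy}); here $[F'']\supseteq[F^1]$ is again $\phi^k$--invariant with linear restriction, and $H$ is commensurable to $F''\rtimes_{\phi^k|_{F''}}\integers$, which is not unbranched: indeed by Theorem~\ref{lem:unbranched-implies-one-Nielsen} a CT of $\phi^k|_{F^1}$ has excessive linearity, a CT of $\phi^k|_{F''}$ restricts to it, and the sets $E(u),T(u)$ of Definition~\ref{defn:excessive-linearity} only grow under such an enlargement, so a CT of $\phi^k|_{F''}$ has excessive linearity and Theorem~\ref{lem:unbranched-implies-one-Nielsen} applies again. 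Propagating non-unbranchedness of $H$ back up through Propositions~\ref{prop:unbranched-cyclic-splitting} and \ref{prop:relhyp-unbranched} shows $\maptor^k$ is not unbranched.

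\textbf{Finishing.} Given the key reduction, Theorem~\ref{lem:unbranched-implies-one-Nielsen} applied to each restriction $\phi^k|_{F^1}$ (rotationless, UPG, of at most linear growth by the choice of $k$) says that $F^1\rtimes_{\phi^k|_{F^1}}\integers$ is unbranched if and only if no CT map representing $\phi^k|_{F^1}$ has excessive linearity; quantifying over all such $[F^1]$ gives precisely \eqref{item:excessive-linearity}. The quantifier costs nothing in one direction: any such $[F^1]$ is a free factor of a component $[F^1_c]$ of the maximal at-most-linearly-growing $\phi^k$--invariant free factor system, a CT of $\phi^k|_{F^1}$ is a restriction of a CT of $\phi^k|_{F^1_c}$, and excessive linearity of the restriction forces it for the larger map since $E(u)\cup T(u)$ only grows; conversely each $[F^1_c]$ is itself such an $[F^1]$. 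Chaining the three equivalences proves \eqref{item:col-hhg}$\Leftrightarrow$\eqref{item:excessive-linearity}.

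\textbf{Main obstacle.} The hard part will be the bookkeeping inside the key reduction --- matching the terminal vertex groups coming from the cyclic hierarchy and the relative-hyperbolicity peripheral structure with mapping tori of \emph{restrictions} of $\phi^k$ to genuine $\phi^k$--invariant free factors. Three points require care: (i) the twisted stable letters $w_e t_{e^+}$ in Propositions~\ref{prop:cyclic-hierarchy} and \ref{prop:split-over-tori} produce automorphisms only commensurable to, not equal to, restrictions of $\phi^k$, which I would absorb using Lemma~\ref{lem:unbranched-index} and the fact that polynomial growth degree is invariant under passing to powers of an automorphism; (ii) applying Propositions~\ref{prop:cyclic-hierarchy} and \ref{prop:split-over-tori} forces a passage to finite-index subgroups, again handled by Lemma~\ref{lem:unbranched-index}; and (iii) rotationlessness of $\phi^k$ must be used to upgrade the relevant \emph{periodic} free factor systems (filtration strata, topmost-edges pieces) to honestly $\phi^k$--\emph{invariant} ones, so that ``$\phi^k$ restricted to $[F^1]$'' is meaningful --- this is the one place where the precise choice of $k$ is essential rather than merely convenient.
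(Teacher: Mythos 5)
Your proposal is correct and takes essentially the same route as the paper: pass to a rotationless UPG power $\phi^k$, use Theorem~\ref{thm:main} to reduce (1) to unbranchedness, and then climb the cyclic hierarchy (Propositions~\ref{prop:cyclic-hierarchy} and \ref{prop:unbranched-cyclic-splitting}, plus Propositions~\ref{prop:relhyper} and \ref{prop:relhyp-unbranched} in the exponential case) to land on linear restrictions, where Theorem~\ref{lem:unbranched-implies-one-Nielsen} supplies the translation to excessive linearity. Where you go beyond the paper is in isolating the ``key reduction'' as a clean biconditional and in addressing the quantifier mismatch head-on --- namely that (2) ranges over \emph{all} $\phi^k$--invariant free factors with linear restriction, not only those realized as terminal vertex groups of the hierarchy --- via the observation that a CT for a larger linear invariant factor realizes the smaller one as a filtration subgraph and that $E(u)\cup T(u)$ is monotone under this passage. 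The paper is terser at exactly this point (it passes directly from ``$\Gamma_1$ not unbranched'' to ``some terminal vertex group not unbranched''), so your version makes an implicit step explicit; the one thing worth making fully precise in a writeup is the claim that a CT for $\phi^k|_{F''}$ can be chosen so that its restriction to the filtration subgraph carrying $[F^1]$ is itself a CT for $\phi^k|_{F^1}$, which is the lemma your monotonicity argument for $E(u)\cup T(u)$ rests on.
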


\begin{proof} We consider three cases, according to the growth rate of $\phi$.  There exists $k\geq 1$ such that $\phi^k$ is rotationless and hence admits CT representatives and, moreover, if $\phi$ is polynomially growing, then $\phi^k$ is UPG \cite[Lem. 4.42]{FeighHandel:recognition},\cite[Thm. 4.28]{FeighHandel:recognition}, \cite[Cor. 5.7.6]{BFH-00}.  By replacing $\phi$ with $\phi^k$, we therefore assume that $\phi$ itself already has these properties. Observe that Theorem \ref{thm:main} implies it is sufficient to prove that \eqref{item:excessive-linearity} is equivalent to $\maptor$ being unbranched, and that by Lemma \ref{lem:unbranched-index}, there is therefore no loss of generality in passing to the power.  

\textbf{Case 1:} Assume $\phi$ has at most linear growth and let $F^1 = \F$. Theorem \ref{lem:unbranched-implies-one-Nielsen} proves $(\ref{item:col-hhg}) \Leftrightarrow (\ref{item:excessive-linearity})$ for the linear case. 

\textbf{Case 2:} Assume that $\phi$ has quadratic growth. Let $F^1$ be a free factor of $\F$ whose conjugacy class $[F^1]$ is $\phi-$invariant and has the property that $\phi\mid_{F^1}$ has linear growth. Let $f_1: Y_1\to Y_1$ be some CT map representing $\phi\mid_{F^1}$, and assume, towards a contradiction, that there is excessive linearity in $Y_1$. Theorem \ref{lem:unbranched-implies-one-Nielsen} implies that $\maptor_1 = F^1 \rtimes_{\phi} \integers$ is not unbranched.  Using the cyclic hierarchy splitting  Proposition \ref{prop:cyclic-hierarchy} we see that some terminal vertex group is not unbranched. Proposition \ref{prop:unbranched-cyclic-splitting} then gives us that $\maptor$ must not be unbranched. This contradicts our hypothesis that $F\rtimes_\phi\integers$ is a colorable HHG.  Therefore we conclude that, for the restriction of $\phi$ to any linearly growing free factor, there are no CT maps where we can witness excessive linearity, proving $\eqref{item:col-hhg}\implies\eqref{item:excessive-linearity}$ for the higher order polynomial growth case.

Suppose next that (\ref{item:excessive-linearity}) holds in the quadratic case and call the mapping torus of $\phi$ as $\maptor$. By the cyclic hierarchy decomposition (Proposition  \ref{prop:cyclic-hierarchy}) we have a graph of groups decompositon of $\maptor$ that satisfies the following conditions: 
\begin{itemize}
    \item Each vertex group is isomorphic to $\integers$. 
    \item Each vertex group is a mapping torus corresponding to a restriction of $\phi$ to an invariant free factor, and the growth of the restriction is at most linear. 
    \item The $\maptor-$action on the Bass-Serre is acylindrical. 
\end{itemize}
Theorem \ref{lem:unbranched-implies-one-Nielsen} implies that each vertex group is unbranched. Invoking Proposition \ref{prop:unbranched-cyclic-splitting}, we get that $\maptor$ is unbranched. Theorem \ref{thm:main} proves $(\ref{item:excessive-linearity}) \implies  (\ref{item:col-hhg})$ for the quadratic growth case. 

\textbf{Case 3:} Assume $\phi$ has polynomial growth of order $\geq 3$. The direction $(\ref{item:col-hhg}) \implies (\ref{item:excessive-linearity})$ has already been handled while proving it for the quadratic case. For the converse direction, proceed by induction replicating the steps of case (2) and using the  appropriate version of the cyclic hierarchy decomposition. In summary, we have shown that when $\phi$ is UPG and rotationless, $(\ref{item:col-hhg}) \Leftrightarrow (\ref{item:excessive-linearity})$.

\textbf{Case 4:} We finally consider the case where $\phi$ has exponential growth. We first show \eqref{item:col-hhg} $\implies$ \eqref{item:no-exceptional}. Proposition \ref{prop:relhyper} and Proposition \ref{prop:relhyp-unbranched} together imply that all the peripheral subgroups from Proposition \ref{prop:relhyper} must be unbranched. Our proof of for the polynomially growing cases completes the proof in this direction. 
    
We now prove \eqref{item:no-exceptional} $\implies$ \eqref{item:col-hhg}. Using the polynomially growing case, we get that each peripheral subgroup is unbranched. Using Proposition \ref{prop:relhyp-unbranched} now completes the proof.
\end{proof}

\section{Examples}\label{sec:exs}

\begin{example}[Quadratic growth HHG]\label{exmp:quad}
Consider the automorphism 
\[ a\mapsto a,\,\,\, b\mapsto ba,\,\,\,\, c\mapsto cb \]
and the CT  representative $f$ on the standard rose with filtration
\[\emptyset \subset \{a \}\subset \{a,b\}\subset \{a,b,c\}.\]
Its cyclic hierarchy decomposition consists of a vertex stabilizer $H= \langle a, b\rangle \rtimes \langle t_v\rangle $ and edge stabilizer $\langle t_e \rangle $. 
Then the mapping torus
\[\maptor=\langle H, c^{-1}\co  c^{-1}t_ec=bt_v, t_e=t_v \rangle\]
is virtually an HHG for any of the following reasons.
\begin{enumerate}

\item The CT representative $f$ does not have excessive linearity (see Definition \ref{defn:excessive-linearity}). The given CT has only one linear stratum, $H_2=\{b\}$. Excessive linearity requires more than one linear stratum by definition.
So $\maptor$ is an HHG by Theorem \ref{thm:main}.

\item The mapping torus $\maptor$ is unbranched (see Definition \ref{defn:block-unbranched}). There is only one block up to conjugacy, namely $\langle a, bab^{-1}\rangle \times \langle t \rangle$, and one can use this to directly verify that $\maptor$ has unbranched blocks and apply Theorem \ref{thm:main} to get that $\maptor$ is an HHG.

\item The mapping torus $\maptor$ is (virtually) an acceptable graph of groups. Because $\maptor$ has quadratic growth and unbranched blocks, Proposition \ref{prop:polynomial-growth-is-virtually-acceptable} gives that $\maptor$ virtually splits as an acceptable graph of groups and is virtually HHG.
\end{enumerate}
\end{example}

Example \ref{exmp:quad} acts geometrically on a CAT(0) space.  Indeed, $H=\langle a,t_v\mid [a,t_v],b^{-1}t_vb=at_v\rangle$.  Let $\langle a,t_v\rangle_H$ act on $\reals^2$ by sending $at_v$ and $t_v$ to the translations $(1,0)$ and $(0,1)$ respectively.  Let $X=\langle a,t_v\rangle_H\backslash \reals^2$, which is a (metric) torus with fundamental group identified with $\langle a,t_v\rangle_H$ in such a way that $at_v$ and $t_v$ are represented by distinct closed geodesics of length $1$.  Let $B$ be a cylinder $S^1\times [0,1]$, for some $L\geq 0$, and form a space $Y$ by attaching the circles $S^1\times\{0\},S^1\times \{1\}$ using these geodesics.  By construction, $\pi_1Y=H$, and by \cite[II.11.21]{BH:book}, the universal cover of $Y$ admits a CAT(0) metric in which $t_v$ and $b$ both have translation length $1$.  Another application of \cite[II.11.21]{BH:book} shows that $\maptor$ is CAT(0).

\begin{example}[Linear growth non-HHG]\label{exmp:gersten}
Consider the automorphism 
\[ a\mapsto a,\,\,\, b\mapsto ba,\,\,\,\, c\mapsto ca^2 \]
for which the mapping torus $\maptor$ is not CAT(0) \cite{Gersten}. Consider the CT representative $f$ on the standard rose with the filtration as in the previous example.
In this example $H_1=\{a\}$ where $a$ is a Nielsen cycle, and $H_2=\{b\}$ and $H_3=\{c\}$ are linear strata supported by $a$. Then the mapping torus
$$ \maptor = \langle a,b,c,t : tat^{-1} =a, tbt^{-1} = ba, tct^{-1}=ca^2 \rangle$$
is not an HHG, for any one of the following reasons.

\begin{enumerate}

\item The CT representative $f$ has excessive linearity (see Definition \ref{defn:excessive-linearity}) because the Nielsen cycle $a$ supports exactly two linear strata $b$ and $c$, while the lift corresponding to the (only) vertex of \emph{a} has fixed subgroup of rank $\geq 2$ since it contains $a, b^{-1} a b, c^{-1} a c$. So the count from Definition \ref{defn:excessive-linearity} is exactly $3$.

\item The mapping torus $\maptor$ is not unbranched (see Definition \ref{defn:block-unbranched}). Consider the blocks
\begin{align*}
    B_1 &= \langle a, bab^{-1} \rangle \times \langle ta \rangle \\
    B_2 &= \langle a, cac^{-1} \rangle \times \langle ta^2 \rangle \\
    B_3 &= \langle a, bab^{-1}, cac^{-1} \rangle \times \langle t \rangle
\end{align*}
whose intersection is $B_1 \cap B_2 \cap B_3 = \langle a, ta^2 \rangle\cong\integers^2$, which has infinite index in all $B_i$. 
\end{enumerate}
In either circumstance, $\maptor$ is not (even virtually) an HHG by Theorem \ref{thm:main}.
\end{example}

\begin{example}[$\mathrm{CAT}(0)$ non-HHG]\label{exmp:cat(0)-non-hhg}
Consider the outer automorphism  $\phi$ given by 
\[ a\mapsto a,\,\,\, b\mapsto aba,\,\,\,\,
 c\mapsto a^2ca^2. \]
This $\maptor=\F_3\rtimes_{\phi} \mathbb Z $ appears elsewhere in the literature: It is $\mathrm{CAT}(0)$ \cite{lyman:new-cat-0} but not cocompactly cubulable \cite[Theorem 1.4]{wu-ye:questions}.  A computation reveals:
$$b(at)b^{-1}=a^{-1}t,\ \ \ \ c(a^2t)c^{-1}=a^{-2}t,$$
so the mapping torus $\maptor$ of $\phi$ splits as a double HNN extension with cyclic edge groups,  vertex group $\langle a\rangle \times \langle t\rangle$, and stable letters $b$ and $c$ performing the above conjugations.  We claim that $\maptor$ is not unbranched, and hence not an HHG, by Theorem \ref{thm:main}.  It is sufficient to verify this in a finite-index subgroup, by Lemma \ref{lem:unbranched-index}, which enables us to pass to a finite-index subgroup, where the offending blocks are easier to visualize, as follows.  The map to the underlying graph corresponds to a homomorphism $\pi:\maptor\to \F(b,c)$ given by $a\mapsto 1,t\mapsto1,b\mapsto b,c\mapsto c$.  Let $H=\langle b^3,c^3, bcb^{-1},b^{-1}cb,cbc^{-1},c^{-1}bc\rangle\leq \F(b,c)$, and let $\maptor'=\pi^{-1}(H)$, which is an index-$5$ subgroup of $\maptor$; the corresponding cover $X'$ of the graph of spaces $X$ associated to the above double HNN extension is shown in Figure \ref{fig:finite-cover}.

\begin{figure}[h]
    \centering
    \includegraphics[width=0.4\linewidth]{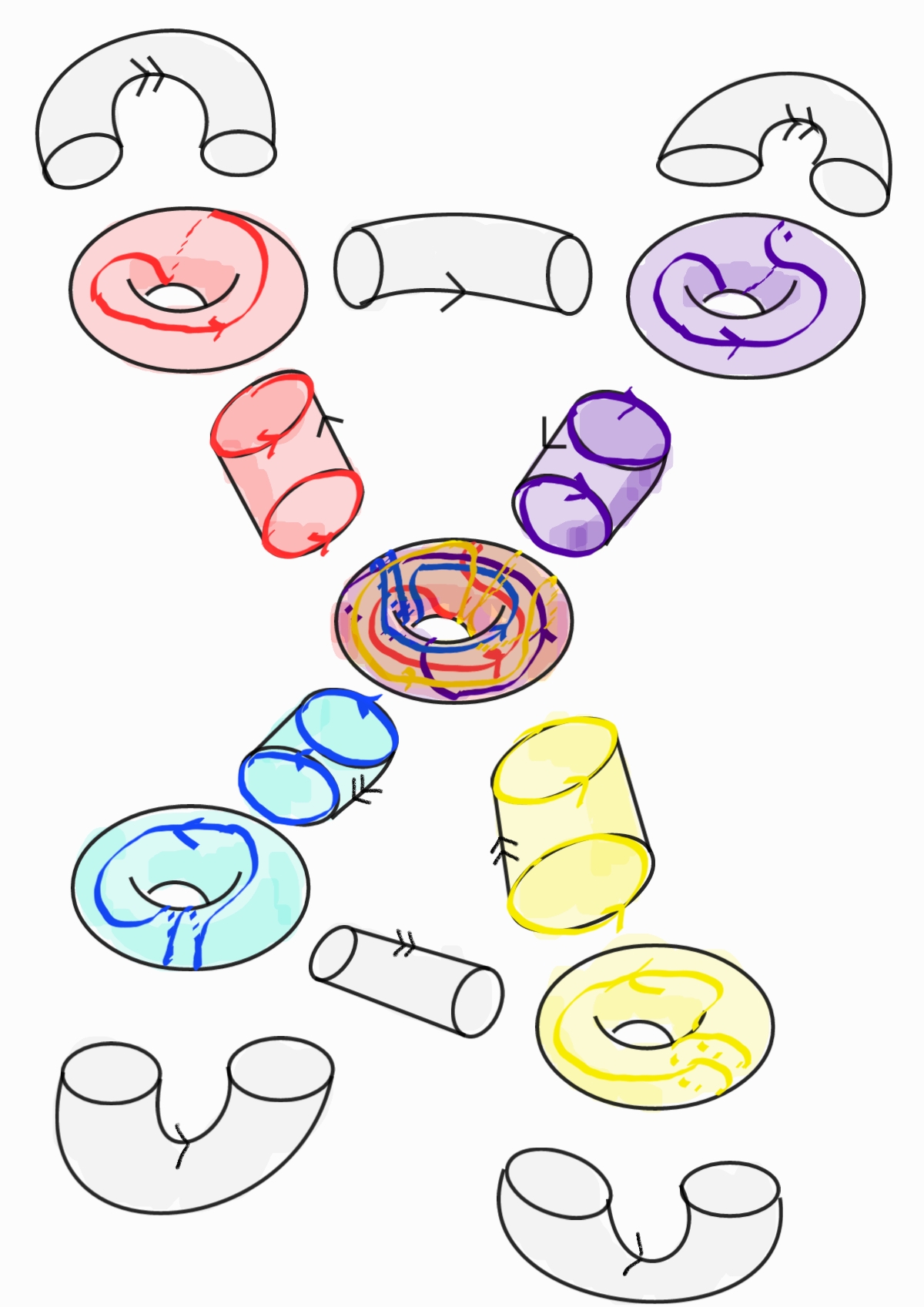}
    \caption{To see that Example \ref{exmp:cat(0)-non-hhg} is non-unbranched, it is convenient to work in this finite cover.  The colored cylinders correspond to cyclic subgroups whose centralizers are the fundamental groups of the four colored subspaces intersecting in the middle torus.  These centralizers are blocks because the incident cylinders in each torus are all attached along non-homotopic circles, as indicated by the colorful curves in the middle torus.}
    \label{fig:finite-cover}
\end{figure}

Since $a,t\in\kernel(\pi)\leq \maptor'$, the vertex space (a torus $T\subset X$ corresponding to $\langle a,t\rangle$) has five elevations $T_i,\ i\in\{0,1,2,3,4\}$ to the cover $X'\to X$, and the covering maps $T_i\to T$ all have degree one.  For $1\leq i\leq 4$, let $C_i\subset X$ consist of $T_0,T_i$, and the cylinder joining them.  The restriction of the covering map to $C_i$ induces a map $\pi_1C_i\to \maptor$ whose image is a subgroup isomorphic to $\F_2\times \integers$; let $B_i$ denote this subgroup.  Observe that $\langle a,t\rangle \leq B_1\cap B_2\cap B_3\cap B_4$.    

To conclude that $\maptor'$, and hence $\maptor$, is unbranched, it thus suffices to show that each $B_i$ is a block in $\maptor'$.  Let $g_i\in B_i$ generate the edge group corresponding to the unique edge space in $C_i$.  Since each $T_j$ contains four non-homotopic circles that are attaching circles of incident edge-cylinders, $B_i$ is the entire centralizer of $g_i$, and hence $B_i$ is a block, as required.
\end{example}

One way to produce a virtual HHG structure on a group is to show that it is virtually compact special, but the next example shows that one cannot recover all HHG structures provided by Theorem \ref{thm:main} in this way:

\begin{exmp}[HHG but not cocompactly cubulated]\label{exmp:not-via-specialness}
The group 
\[\langle a,b, t\mid tat^{-1}=a^{-1}, tbt^{-1}=a^{-1}b^{-1}\rangle \] has the form $\maptor=\F_2\rtimes_\Phi\integers$, where $\Phi$ is linearly growing.  The finite-index subgroup $\maptor'=\langle a,b,t^2\rangle_\maptor$ splits as a single HNN extension of $\integers^2$ with $\integers$ edge group, from which one can check (directly or using \cite{Wise:tubular,Woodhouse:special}) that $\maptor$ is virtually compact special, and hence a virtual HHG.  One can also check directly that the virtual splitting from Proposition \ref{prop:split-over-tori} has one black vertex, so Lemma \ref{lem:linear-unbranched} and Theorem \ref{thm:main} imply that $\maptor$ is an HHG.  On the other hand, using \cite[Lem. 7.1]{wu-ye:questions}, one verifies that $\maptor$  does not have the \emph{RFRS} property, so it is not special (compact or otherwise), just \emph{virtually} special.

However, slightly more complicated examples are HHGs but not even virtually cocompactly cubulated.  Indeed, \cite{HRSS} shows that $\maptor$ is an HHG if $\maptor$ is the fundamental group of the mapping torus of a multitwist on a (punctured) orientable surface.  On the other hand, if such a $\maptor$ is virtually cocompactly cubulated, then it is \emph{chargeless} \cite[Thm. B]{HagenPrzytycki}, and we can use the latter criterion to choose $\maptor$ with the desired properties. 

Here is an explicit example where $\maptor$ is not virtually cocompactly cubulated but it is the fundamental group of a graph manifold with boundary, and hence an HHG.  Let $S_0$ be a torus with one boundary component $b_0$, let $S_1$ be a torus with two boundary components $b_1$ and $c'$, and let $S=S_0\cup_{b_0=b_1} S_1$ be a once-punctured genus--$2$ surface; the image of $b_0,b_1$ in $S$ is a separating curve we denote $b$.  Let $\tau:S\to S$ be the Dehn twist about $b$.  Note that $\F=\pi_1S$ is a free group of rank $4$, and the induced homomorphism $\tau_*:\F\to \F$ is a linearly growing automorphism.  Let $\maptor$ be its mapping torus, which is the fundamental group of the mapping torus of $\tau$, a graph manifold $M$ with base surface $S$.  The decomposition of $\maptor$ from Proposition \ref{prop:split-over-tori} corresponds to the JSJ decomposition of $M$, which is obtained by cutting along the torus $b\times f\subset M$, where $f$ is a fiber.  The vertex spaces are $S_i\times f_i$, for $i\in\{0,1\}$, where $f_i$ is a fiber circle in the vertex space.  The edge maps $b\times f\to S_i\times f_i$ are given by $s\mapsto s_1,f\mapsto f_1$ and $s\mapsto s_0,f\mapsto f_0s_0$.  Since there is a unique edge incident to the vertex corresponding to $S_0\times f_0$, and $s_0$ vanishes in $H_1(S_0\times f_0,\integers)$, the image of $f$ in $S_0\times f_0$ represents the nontrivial homology class represented by $f_0$, so the charge of $M$ at $S_0\times f_0$ is nonvanishing (see \cite[Defn. 1.1]{HagenPrzytycki}).  By \cite[Thm. B]{HagenPrzytycki}, $\maptor$ is not virtually cocompactly cubulated. (However, since $M$ is a graph manifold with boundary, it has a nonpositively-curved Riemannian metric, and hence $\maptor$ is a (non-cubical) CAT(0) group.)  The existence of examples like this was pointed out in \cite{HRSS} but explicit ones were not given.
\end{exmp}

\bibliographystyle{alpha}
\bibliography{biblio}
\end{document}